\documentclass[letterpaper,10pt]{amsart}

\input xy  
\xyoption{all}

\usepackage{amsmath,amssymb,bm,amsthm, textcomp}
\usepackage{hyperref,comment}
\usepackage{fullpage}
\usepackage{graphicx, psfrag}
\usepackage[usenames]{color}

\parskip 5pt

%%% thm env %%%

\theoremstyle{plain}
\newtheorem{thm}{Theorem}[section]
\newtheorem{lemma}[thm]{Lemma}
\newtheorem{prop}[thm]{Proposition}
\newtheorem{cor}[thm]{Corollary}
\newtheorem{question}{Question}

\newtheorem*{thm*}{Theorem}
\newtheorem*{clcsintro}{Theorem \ref{classicalcase}}

\theoremstyle{definition}
\newtheorem*{defn}{Definition}

\newtheorem*{assumption}{Assumption}
\newtheorem{rmk}[thm]{Remark}
\newtheorem{example}[thm]{Example}

\newcommand{\mc}[1]{\mathcal{#1}}
\newcommand{\frk}[1]{\mathfrak{#1}}

%%% notations %%%

\def\C{\mathbf{C}}
\def\Z{\mathbf{Z}}

\def\O{\mathcal{O}}
\def\A{\mathcal{C}}
\def\D{\mathcal{B}}

\def\p{\mathfrak{p}}

\def\bt{\bullet}

\def\ta{\tilde{a}}

\def\br{\mathbf{r}}

\def\tp{t_+}
\def\tm{t_-}

\def\Op{\O^+}

\def\sg{\sigma}
\def\Sg{\Xi}
\def\la{\lambda}

\def\aart{\A^{\gr}_{Art}}

\def\onto{\twoheadrightarrow}

\def\tensor{\otimes}
\def\empt{\varnothing}
\def\iso{\stackrel{\simeq}{\longrightarrow}}

\DeclareMathOperator{\Hom}{Hom}
\DeclareMathOperator{\Ext}{Ext}
\DeclareMathOperator{\End}{End}
\DeclareMathOperator{\gr}{gr}
\DeclareMathOperator{\mult}{mult}
\DeclareMathOperator{\id}{id}
\DeclareMathOperator{\Aut}{Aut}
\DeclareMathOperator{\spec}{Spec}
\DeclareMathOperator{\maxspec}{MaxSpec}

\DeclareMathOperator{\supp}{supp}
\DeclareMathOperator{\ann}{ann}
\DeclareMathOperator{\proj}{Proj}
\DeclareMathOperator{\sym}{Sym}
\DeclareMathOperator{\grMod}{grMod}

\def\Homgr{\Hom_{\gr}}
\def\Extgr{\Ext_{\gr}}
\def\Endgr{\End_{\gr}}

%opening
\title{Generalized Weyl algebras: category $\O$ and graded Morita equivalence}
\author{Ian Shipman}
\email{ics@math.uchicago.edu}
\address{Dept. of Mathematics, University of Chicago, 5734 S. University Ave., Chicago, IL 60637}

\begin{document}

\maketitle

\begin{abstract}
We study the structural and homological properties of graded Artinian modules over generalized Weyl algebras (GWAs), and this leads to a decomposition result for the category of graded Artinian modules.  Then we define and examine a category of graded modules analogous to the BGG category $\O$.  We discover a condition on the data defining the GWA that ensures $\O$ has a system of projective generators.  Under this condition, $\O$ has nice representation-theoretic properties.  There is also a decomposition result for $\O$.  Next, we give a necessary condition for there to be a strongly graded Morita equivalence between two GWAs.  We define a new algebra related to GWAs, and use it to produce some strongly graded Morita equivalences.  Finally, we give a complete answer to the strongly graded Morita problem for classical GWAs.
\end{abstract}

\section{Introduction}
Generalized Weyl algebras (GWAs) are defined simply by generators and relations.
\begin{defn}[Generalized Weyl algebra]
Let $R$ be a finitely generated $\C$ algebra, $\sg \in \Aut_\C(R)$ a $\C$ linear automorphism, and $v \in R$ an element.  We refer to such a triple $(R,\sg,v)$ as \emph{GWA data}.  This data determines an algebra $T(R,\sg,v)$ that is generated by $R,\tp,$ and $\tm$ subject to the relations
\begin{equation}\label{gwarelations}
 r \tp = \tp \sg(r), \quad \sg(r) \tm = \tm r, \quad \tm\tp = \sg(v), \quad \tp\tm = v
\end{equation}
for any $r \in R$.  $R$ is naturally a subring of $T(R,\sg,v)$.
\end{defn}
Any GWA has a natural $\Z$ grading and throughout the article we will view GWAs in the graded context.  The grading is determined by assigning the the following degrees to the generators
\[
\deg(\tp) = 1, \quad \deg(\tm) = -1, \quad \deg(r) = 0, \, r \in R.
\]
To deal with graded objects we abide by some standard conventions.  We denote the $i^{th}$ homogeneous component of $M$ by $M_i$, define the $n$-shift $M[n]$ of $M$ by the grading rule $M[n]_i = M_{i+n}$, and we set $\delta(M) = \{i \in \Z : M_i \neq 0\}$.  We use the notation $\Homgr$ for the space of module maps that preserve degree, so a module homomorphism $\phi$ is in $\Homgr(M,M')$ if $\phi(M_i) \subset M'_i$.

In addition to the study of arbitrary GWAs, we will be interested in a certain special case.
\begin{defn}[Classical GWAs]
The GWAs determined by data of the form $(\C[h],\tau,v)$ where $\tau(p(h)) = p(h+1)$ are called \emph{classical GWAs}.  We introduce a simpler notation for the classical GWAs: $T(v) = T(\C[h],\tau,v)$.  From now on $\tau$ will always refer to the above automorphism of $\C[h]$.
\end{defn}

GWAs have been well studied in a series of papers by V. Bavula and others, including \cite{Bav1,Bav2, BJ}.  One motivation for the study of GWAs is the fact that the classical Weyl algebra of differential operators on $\C[h]$ is isomorphic to $T(h)$.  In addition, Hodges studied the classical GWAs in \cite{H} as noncommutative deformations of Kleinian singularities.  There, he posed the Morita problem.  While certain necessary conditions are known, e.g. \cite{RS}, the Morita problem remains open.  Motivated by these results, we introduce a version of the Morita problem that incorporates the natural grading on the GWAs.
\begin{defn}
Let $A$ and $B$ be graded rings. A \emph{strongly graded Morita equivalence} is a $\C$-linear equivalence of categories $F:B-\grMod \iso A-\grMod$ such that for any graded $B$ module $M$, $F(M[1])\cong F(M)[1]$.  
\end{defn}
In \S 4 we discover both a necessary condition for there to be a strongly graded Morita equivalence between two GWAs and a method for constructing such equivalences.  Along the way, we define an algebra that generalizes both the GWAs and the preprojective algebra.  The main result of the paper is the following theorem, where ``type'' refers to a certain explicit equivalence relation on $\C[h]$.
\begin{clcsintro}
$T(v_1)$ and $T(v_2)$ are strongly graded Morita equivalent if and only if for some $b$, $v_1(h+b)$ and $v_2(h)$ have the same type.
\end{clcsintro}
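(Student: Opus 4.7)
The statement has two directions and the plan is to treat them separately by leveraging the general results developed in \S 4. Throughout, let $\mathrm{type}(v)$ denote the equivalence class referenced in the statement; by construction it is a combinatorial invariant read off the multiset of zeros of $v$ grouped by $\tau$-orbit (i.e., by coset of $\Z$ in $\C$), together with multiplicities, and the uniform translation $h \mapsto h+b$ acts on types in a transparent way.

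For the ``only if'' direction, suppose $F$ is a strongly graded Morita equivalence between $T(v_1)$ and $T(v_2)$. The first step is to invoke the necessary condition from \S 4 for a strongly graded Morita equivalence between arbitrary GWAs, and specialize it to the classical data $(\C[h],\tau,v_i)$. The plan is then to show that, after unwinding this condition, it asserts precisely that the orbit-by-orbit configurations of zeros of $v_1$ and $v_2$ agree up to a single global shift: the combinatorial data within each $\tau$-orbit is Morita invariant, the labeling of which orbit is which is not (orbits can be relabeled by $F$ via a permutation), and strong gradedness rigidifies the one-parameter ambiguity of a base point within $\C/\Z$ to a single translation parameter $b \in \C$.

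For the ``if'' direction, suppose $v_1(h+b)$ and $v_2(h)$ have the same type. First observe that the substitution $h \mapsto h+b$ furnishes an isomorphism (and in particular a strongly graded Morita equivalence) $T(v_1) \simeq T(v_1(h+b))$, reducing to the case $b=0$. For $v_1$ and $v_2$ of the same type, apply the construction from \S 4: the new algebra defined there interpolates between GWAs differing by rearrangements of the zeros within a fixed $\tau$-orbit, and admits both $T(v_1)$ and $T(v_2)$ as corners. The projective bimodule extracted from this presentation implements the desired equivalence of graded module categories, and the grading shift $[1]$ on one side manifestly corresponds to $[1]$ on the other by construction, giving strong gradedness.

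The main obstacle is the ``only if'' direction. The necessary condition from \S 4 is phrased in terms of intrinsic representation-theoretic invariants (built from the simple graded modules and their $\Ext$ data), and the work lies in translating that data back into the combinatorial language of types, and then showing that the only residual freedom not detected by the intrinsic invariants is the single global parameter $b$. Sufficiency, by contrast, is constructive: once the interpolating algebra of \S 4 and its corner decompositions are established, the Morita bimodule is essentially forced by the type hypothesis.
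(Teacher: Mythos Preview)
Your high-level structure matches the paper's, but there is a genuine gap in the ``only if'' direction. The necessary condition of \S 4 (Theorem \ref{mornec}) is much weaker than you suggest: it produces a $\tau$-equivariant automorphism $\rho:\C[h]\to\C[h]$ (hence a translation $h\mapsto h+b$) with the property that $\la$ meets the $\sg_2$-orbit of $v_2$ if and only if $\rho^{-1}(\la)$ meets the $\sg_1$-orbit of $v_1$. In the classical setting this says only that $Z^\sg(v_1)$ and $Z^\sg(v_2)$ agree as subsets of $\C$ after a single translation. That is far from ``same type'': it detects neither the \emph{ordering} of the roots within a $\Z$-orbit nor their \emph{multiplicities}. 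Your phrase ``unwinding this condition'' hides exactly the part of the argument that requires new ideas.

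The paper recovers the missing invariants via the category $\Op$ developed in \S 3. After the reduction by $b$, the equivalence restricts to $\Op$ and respects the block decomposition of Theorem \ref{gr}, so one works one $\Z$-orbit $w$ at a time. The ordering of roots inside $w$ is recovered from adjacency of simples via $\Extgr^1$ (Theorem \ref{extcomp}), together with the fact that exactly one simple in each block has $\delta$ unbounded above; this pins down the largest root and hence the whole order. Multiplicities are the hardest part: the paper introduces $\nu$-small modules and shows (Lemmas \ref{multtest} and \ref{charsmallvermas}) that $\mult(\nu,v)$ equals the maximal multiplicity of $S^\nu$ in any $\nu$-small module, and that the Verma $V^\nu$ is characterized categorically, so $F$ preserves these numbers. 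None of this is visible from the general necessary condition alone, and your proposal does not indicate any mechanism for extracting multiplicities.

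For the ``if'' direction your plan is essentially the paper's. One small correction: the paper does not realize $T(v_1)$ and $T(v_2)$ as corners of a single interpolating $\Pi$-algebra in one step. Rather, Lemma \ref{morfactor} (a two-vertex instance of $\Pi$) swaps one root past the others, and the equivalence is built by iterating this elementary move to slide the roots of $v_2$ onto those of $v_1$ within each $\Z$-orbit.
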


It seems to be well known that the graded simple modules of GWAs can be classified in a straightforward manner if one knows the maximal ideals of $R$ and understands the action of $\sg$ on them.  However, we show that using only the abstract structure of the category of graded modules, one can recover when two simple modules are related by the action of $\sg$.  This is one of the observations at the heart of the theorem above.  In section 2 we will study graded Artinian modules and eventually show that the homological algebra of the category of graded Artinian modules reveals information about the action of $\sg$.

In section 3 we define a certain category of modules, $\Op$, and we are not aware of any other treatment of this category in the literature in the case of GWAs.  The idea behind the category is well known and goes back to \cite{BGG}.  In fact, Khare \cite{K} gives a very general treatment of a category analogous to $\Op$ when the algebra of study has a ``triangular'' decomposition.  The GWAs are not triangular, however they are quotients of triangular algebras.  Although we do not spell out a comparison in this article, our $\Op$ and the category $\O$ defined by Khare have many properties in common, at least in the classical case.  It turns out to be highly structured and preserved under strongly graded Morita equivalences.  We understand the strongly graded Morita equivalences in part by studying how they mutate the structure of $\Op$.  $\Op$ has analogues of the familiar representation theoretic properties of the BGG category $\O$.  We introduce a condition ($\ast$) that holds trivially in the classical case, under which $\Op$ is well behaved.  For example, when ($\ast$) holds, $\Op$ has enough projectives and when ($\ast$) fails, $\Op$ may not have enough projectives.  We use the geometry of the zero set of $v$ to give a decomposition of $\Op$.  Also, this subcategory ``localizes'' on the zero set of $v$, at least when ($\ast$) holds, since it is equivalent to the module category of a finite (but not necessarily commutative) $R/v$ algebra.

Finally, in section 4 we study strongly graded Morita equivalence in earnest.  We construct a map from $R$ into the center of the category of graded modules over a GWA and it turns out that any strongly graded Morita equivalence induces a map between the centers of the graded module categories which is compatible with these embeddings.  In fact, under a mild condition what we prove implies that if $T(R_1,\sg_1,v_1)$ and $T(R_2,\sg_2,v_2)$ are strongly graded Morita equivalent, then not only are $R_1$ and $R_2$ isomorphic, but the zero set of $v_1$ has a locally closed partition such that translating the parts by iterates of $\sg$ gives the zero set of $v_2$.  The notion of ``type'', introduced in \S 4, illustrates this in the case when the zero sets of $v_1$ and $v_2$ are collections of points.  

We are also able to describe a method for producing many strongly graded Morita equivalences between GWAs, which leads to a generalization of a sufficient condition for Morita equivalence discovered by Hodges.  The main tool is a ``many-vertex'' version of a GWA, which has the property that attached to each vertex is an ordinary GWA and there is a simple criterion for when two of the vertex GWAs are strongly graded Morita equivalent.

\subsubsection*{Acknowledgements}
I would like to thank Victor Ginzburg for introducing me to category $\O$ for generalized Weyl algebras, for suggesting that it might be interesting, and for many helpful discussions.  Also, I thank Aaron Marcus and Mike Miller for reading a draft of this article, catching mistakes, and offering helpful comments.  

\section{Artinian graded modules}

In this section, we will study the structure and certain homological properties of Artinian graded modules over GWAs.  Artinian modules are examples of what are known as weight modules and one can find a general treatment with constructions and structure theory in \cite{DGO}.  However for the sake of completeness, we will reproduce the results we need.   The connection between simple graded modules and $\maxspec R$; and the homological properties that we develop will be used in section 4 to prove the main results.  Lemma \ref{simp} is used frequently throughout the paper.  We will always use the term ``map'' to mean homomorphism.  As a final preliminary remark, every module will be graded, every sub- and quotient module will be graded, and every map will be degree preserving.

Let $R$ be a finitely generated commutative $\C$ algebra.  Fix a $\sg \in \Aut(R)$ and an element $v \in R$, and let $A := T(R,\sg,v)$.  Let $Z = \spec R/(v) =  \{ \p \in \spec R : v \in \p \}$ and $Z^\sg = \{\sg^n(\p) : \p \in Z, n \in \Z\}$.  We will think of these sets as spaces with their Zariski (subspace) topology but will not need to think of $Z$ as a scheme, and $Z^\sg$ might not even have an evident scheme structure.  It will be helpful to think of $A_i$ as an $(R,R)$ bimodule generated by $\tp^i$ or $\tm^i$.  Note that $A_i$ is isomorphic to $R$ as both a left and right module.  From now on, we reserve the notations $\la$ and $\mu$ for \emph{maximal} ideals of $R$.  

\begin{defn}
For each $\la \in \maxspec R$ we can view $\la$ as a subset $\la \subset A_0$ and define a graded left $A$-module $A^\la = A/A\la$.   These modules will play a very prominent role.  Note that $A \la = \la \oplus \bigoplus_{n > 0}{ \tp^n \la \oplus \tm^n \la }$ and therefore for all $i \in \Z$ we have $\dim_\C A^\la_i = 1$.  Since the images of $t^n_\pm$ are nonzero for any $n \geq 0$, the quotient map gives the identification 
\[
A \supset \bigoplus_{n > 0}{\C \tm^n} \oplus \C \oplus \bigoplus_{n > 0}{\C \tp^n} \stackrel{\simeq}{\longrightarrow} A^\la
\]
of $\C$ vector spaces.  As a left $R$ module we have
\[
	A^\la \cong \bigoplus_{n > 0}{ (R/\sigma^{n}(\la)) \cdot \tm^n } \oplus R/\la \oplus \bigoplus_{n>0}{(R/\sigma^{-n}(\la)) \cdot \tp^n}
\]
Set $\chi_\la = \{k \in \Z : \sg^k(v) \in \la\}$.  For $k \in \chi_\la$ set
\[
A^{\la,k} :=
\begin{cases}
\bigoplus_{i < k}{A^\la_i} & k \leq 0, \\
\bigoplus_{i \geq k}{A^\la_i} & k > 0.
\end{cases}
\]
For any $k \in \chi_\la$, $A^{\la,k}$ is a submodule of $A_\la$.  Clearly, $\tm A^\la_i \subset A^\la_{i-1}$ and $\tp A^\la_i \subset A^\la_{i+1}$.  For $i \leq 0$, $\tm A^\la_i = A^\la_{i-1}$ and for $i \geq 0$, $\tp A^\la_i = A^\la_{i+1}$.  Therefore $A^{\la,k}$ is a submodule if $\tm A^\la_k = 0$ when $k > 0$ or $\tp A^\la_{k-1} = 0$ when $k \leq 0$.  In the first case we have $\tm \tp^k = \tp^{k-1}\sg^k(v) = 0$ and in the second case $\tp \tm^{1+|k|} = \tm^{|k|}\sg^k(v) = 0$ since $k \in \chi_\la$.
\end{defn}

Right multiplication by $\tp$ and $\tm$ defines left module maps $\tilde{\phi}^+:A \to A[1]$ and $\tilde{\phi}^-:A[1] \to A$, respectively.  Observe that $\tilde{\phi}^+(A\la) = A\la \tp = A\tp \sg(\la) \subset A \sg(\la)$ and similarly $\tilde{\phi}^-(A \sg(\la)) \subset A \la$.  Thus right multiplication by $\tp$ and $\tm$ induce a pair of maps $\phi^+:A^\la \to A^{\sg(\la)}[1]$ and $\phi^-:A^{\sg(\la)}[1] \to A^\la$ given by
\[
\phi^+(x) = x\tp, \quad \phi^-(x) = x \tm.
\]
Moreover for homogeneous $x$, $\phi^+ \circ \phi^-(x) = \sg^{1-\deg(x)}(v)x$ and $\phi^- \circ \phi^+(x) = \sg^{-\deg(x)}(v) x$.  Finally, recall that if $M$ is a graded $A$ module define $\delta(M) := \{i \in \Z : M_i \neq 0\}$.  The following Lemma is a version of Theorem 5.8 in \cite{DGO}.

\begin{lemma} \label{simp} \mbox{}
\begin{enumerate}  
\item  Let $M \subset A^\la$ be a proper, nontrivial, graded submodule.  Then either $M = A^{\la,k}$ for some $k \in \chi_\la$ or $M = A^{\la,k} \oplus A^{\la,k'}$ for $k, k' \in \chi_\la$ with $k \leq 0 < k'$.
\item $A_\la$ is Artinian if and only if $\chi_\la$ is finite.  It is simple if and only if $\chi_\la = \empt$, or put another way $\la \notin Z^\sg$.
\item There is a unique maximal submodule of $A^\la$.  Let $S^\la$ be the quotient of $A^\la$ by this maximal submodule.  Note that $S^\la$ is simple.  Every simple graded $A$ module is isomorphic to $S^\la[i]$ for some $\la \in \maxspec R$ and some $i \in \Z$.
\item If $\la \notin Z$ then $A^\la \cong A^{\sg(\la)}[1]$.
\end{enumerate}
\end{lemma}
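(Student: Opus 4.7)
The plan is to exploit the one-dimensionality of each graded piece $A^\la_i$ over $\C$ (explicit from the definition of $A^\la$) and reduce the classification of submodules to a combinatorial question about subsets of $\Z$. For (i), I would first compute how $\tp$ and $\tm$ act between consecutive graded pieces. Using the relations, together with the left-$R$-module identifications $A^\la_i \cong R/\sg^{-i}(\la)$ and the maximality of $\la$, one verifies that $\tm : A^\la_i \to A^\la_{i-1}$ is an isomorphism for $i \leq 0$, while for $i \geq 1$ it vanishes precisely when $i \in \chi_\la$; symmetrically, $\tp$ is always an isomorphism for $i \geq 0$ and vanishes for $i \leq -1$ precisely when $i+1 \in \chi_\la$. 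A graded submodule $M$ is therefore determined by $\delta(M) \subset \Z$, and the only positions where $\delta(M)$ may transition from containing to not containing are those where the corresponding arrow is zero. Since both arrows are nonzero at $i = 0$, any proper submodule satisfies $0 \notin \delta(M)$, and a direct case analysis on the positive and negative halves then forces $\delta(M) = \Z_{< k} \sqcup \Z_{\geq k'}$ with $k \leq 0 < k'$ and $k, k' \in \chi_\la$ (either piece possibly empty), matching $A^{\la,k}$ and $A^{\la,k} \oplus A^{\la,k'}$.

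Parts (ii) and the first half of (iii) then follow essentially for free from (i). A descending chain of submodules corresponds to a strict chain in the indexing, which can descend indefinitely iff $\chi_\la$ is infinite; $A^\la$ is simple iff no proper nontrivial submodule exists, i.e., $\chi_\la = \empt$, and the latter is just a rephrasing of $\la \notin Z^\sg$ (since $\sg^k(v) \in \la$ iff $\la \in \sg^k(Z)$). A unique maximal proper submodule is obtained by taking the greatest $k \leq 0$ in $\chi_\la$ and the least $k' > 0$ in $\chi_\la$ (when they exist); it exists because $\chi_\la \cap \Z_{\leq 0}$ is bounded above and $\chi_\la \cap \Z_{\geq 1}$ below, and the quotient $S^\la$ is simple by construction.

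For the second half of (iii), let $N$ be a simple graded $A$-module. After a shift I may assume $N_0 \neq 0$; pick $x \in N_0 \setminus \{0\}$. Simplicity gives $Ax = N$, and since $A_0 = R$ the degree-zero part of $Ax$ equals $Rx$, so $N_0 = Rx$ is cyclic over $R$. The annihilator $\la := \ann_R(x)$ must be maximal: if $\la \subsetneq J \subsetneq R$ then $AJx$ has degree-zero component $Jx \subsetneq Rx = N_0$, yielding a nonzero proper graded submodule, contradicting simplicity. The $A$-map $A \to N$, $a \mapsto ax$, thus factors through a surjection $A^\la \onto N$, and simplicity forces $N \cong S^\la$; undoing the shift yields the result.

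For (iv), the maps $\phi^\pm$ introduced before the lemma have compositions $\phi^-\phi^+$ and $\phi^+\phi^-$ that act on each one-dimensional graded piece by multiplication by a specific power $\sg^n(v)$. The hypothesis $\la \notin Z$ means $v \notin \la$, which translates into $\sg^n(v) \notin \sg^n(\la)$ for every relevant $n$, so each scalar is nonzero in the residue field $R/\sg^n(\la) \cong \C$ and acts invertibly. Hence both compositions are isomorphisms, and $\phi^+$ is an isomorphism. The main obstacle I anticipate is the boundary analysis in (i) at the seam $i = 0$—making precise that both arrows there are nonzero regardless of $\chi_\la$, so that no submodule ``splices'' across the origin—and keeping the degree conventions straight in (iv), where the power of $\sg$ that appears is easy to misidentify by one.
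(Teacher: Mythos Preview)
Your proposal is correct and follows essentially the same approach as the paper: reduce (i) to combinatorics on $\delta(M)$ via the one-dimensionality of $A^\la_i$, read off (ii) and the first half of (iii) from (i), classify simples in (iii) by producing a surjection $A^\la \onto N$ after a shift, and prove (iv) by showing the compositions $\phi^\pm\phi^\mp$ act by nonzero scalars. The only cosmetic difference is in (iii), where you show directly that $\ann_R(x)$ is maximal, while the paper instead picks any maximal ideal $\la \supset J_0$ and uses simplicity to force $A\la \subset J$; both arguments are equally short and yield the same factorization through $A^\la$.
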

\begin{proof}
(i)  Let $M \subset A^\la$ be a proper, nontrivial, graded submodule.  Since the homogeneous components of $A^\la$ are one dimensional the set $\delta(M)$ determines $M$.  Since $A^\la$ is cyclic and hence generated by $A^\la_0$, $0 \notin \delta(M)$.  Let $i \in \delta(M)$.  If $i > 0$ and $j > i$ then $j \in \delta(M)$ since $\tp^{j-i}A^\la_i = A^\la_j$.  Similarly if $i < 0$ and $j < i$ then $j \in \delta(M)$.  Hence $\delta(M)$ is determined by $k_1 = \max\{k \in \delta(M) : k < 0\}$ and $k_2 = \min\{k \in \delta(M) : k > 0\}$.  Suppose that $k_1$ exists.  Since $k_1 + 1 \notin \delta(M)$ we must have $\tp A^\la_{k_1} = 0$ and therefore $\tp \tm^{-k_1} = \tm^{-k_1 - 1} \sg^{k_1 + 1}(v) = 0$ so $\sg^{k_1+1}(v) \in \la$.  But this means that $k_1 + 1 \in \chi_\la$ so $A^{\la,k_1+1} \subset M$.  Similarly if $k_2$ exists then $A^{\la,k_2} \subset M$.  Thus $M = A^{\la,k_1+1}, M = A^{\la,k_2},$ or $M = A^{\la,k_1+1} \oplus A^{\la,k_2}$, depending on whether $k_1$ or $k_2$ or both exist.

(ii)  Immediate from (i).

(iii) The first assertion is clear from (i).  Suppose $S$ is a simple graded $A$ module and that $i \in \delta(S)$.  Then there is a surjection $A \onto S[i]$.  Let $J$ be the kernel of this surjection and let $\la \in \maxspec R$ be a maximal ideal containing $J_0$.  Consider $J + A\la$.  Since $S[i]$ is simple the image of this left ideal must be either $0$ or $S[i]$.  Because $(J+A \la)_0 = \la$ it follows that $J + A \la \subset J$ and therefore that $A \la \subset J$.  Hence, our surjection factors through a map $A^\la \onto S[i]$.  Since $S^\la$ is the only simple quotient of $A^\la$ it follows that the previous map factors through $S^\la \to S[i]$.  Of course, $S^\la[-i]$ and $S$ are both simple so the map $S^\la \to S[i]$ has to be an isomorphism.  We conclude that $S \cong S^\la[-i]$.

(iv)  Consider the maps $\phi^+:A_\la \to A_{\sg(\la)}[1]$ and $\phi^-:A_{\sg(\la)}[1] \to A_\la$ from above.  We have $\phi^+ \circ \phi^-(x) = x \sg(v)$ and $\phi^- \circ \phi^+(x) = x v$.  Since $v \neq 0$ modulo $\la$ or equivalently $\sg(v) \neq 0$ modulo $\sg(\la)$ we have $x \sg(v) \neq 0$ and $x v \neq 0$.  But this means that $\phi^+ \circ \phi^-$ and $\phi^- \circ \phi^+$ coincide with multiplication by a nonzero element of $\C$.  So $\phi^+$ and $\phi^-$ are isomorphisms.
\end{proof}
Let $\la \in \maxspec R$.  We want to locate all of the simple subquotients of $A^\la$.  First, if $\chi_\la = \empt$ then $A^\la$ is already simple.  So assume that $\chi_\la \neq \empt$.  Let $A^{\la,+}$ and $A^{\la,-}$ be the maximal positively and negatively graded submodules, respectively.  Lemma \ref{simp} implies that $A/(A^{\la,+} \oplus A^{\la,-})$ is simple and that the submodules of $A^{\la,+}$ and $A^{\la,-}$ form decreasing filtrations.  So every simple subquotient of $A^\la$ except $S^\la$ is either of the form $A^{\la,k}/A^{\la,l}$ for $l > k > 0$ consecutive elements of $\chi_\la$, or of the form $A^{\la,l}/A^{\la,k}$ for $k < l \leq 0$ consecutive elements of $\chi_\la$.  From this we see that if $S$ and $T$ are distinct simple subquotients of $A^\la$ then $\delta(S) \cap \delta(T) = \empt$.  Hence each homogeneous component of $\bar{A}^\la := \bigoplus S$, where the sum runs over the simple subquotients of $A^\la$, is one dimensional, and $\bar{A}^\la = A^\la$ unless $\la \in Z^\sg$.

Now, let us examine part (iv) of Lemma \ref{simp} and its proof more closely.  Suppose that $\la \in Z$.  This means that $v \in \la$ and so $0 \in \chi_\la$.  Hence $A^\la$ and $A^{\sg(\la)}[1]$ have special submodules $A^{\la,0}$ and $A^{\sg(\la),1}[1]$.  We see from the proof that $\phi^+ \circ \phi^-$ and $\phi^- \circ \phi^+$ are both zero in this situation.  However, for $i \geq 0$ we have $\phi^+( A^\la_i) = A^{\sg(\la)}_{i+1}$ and for $i \leq 0$ we have $\phi^-(A^{\sg(\la)}_i) = A^{\la}_{i-1}$.  It follows that $\phi^+$ induces an isomorphism $A^\la/A^{\la,0} \iso A^{\sg(\la),1}[1]$ and that $\phi^-$ induces an isomorphism $A^{\sg(\la)}[1]/A^{\sg(\la),1}[1] \iso A^{\la,0}$.  Hence 
\[ A^{\la,0} \oplus A^\la/A^{\la,0} \cong \left( \left( A^{\sg(\la)}/A^{\sg(\la),1} \right) \oplus A^{\sg(\la),1}\right)[1]. \]  

\begin{lemma} \label{alamsubq} \mbox{}
 \begin{enumerate}
  	
	\item $\bar{A}^\la \cong \bar{A}^{\sg(\la)}[1]$.  Therefore if $\la = \sg^n(\la)$ then $\bar{A}^\la \cong \bar{A}^\la[n]$.
	\item There exist simple subquotients $S,T$ of $A^\la$ such that $S \cong T[n]$ if and only if $\la = \sg^n(\la)$.  In particular if $\sg$ acts freely on $\maxspec R$ then the simple subquotients of $A^\la$ are distinct.
 \end{enumerate}
\end{lemma}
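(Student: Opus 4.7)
My plan is to split (i) into cases based on whether $\la \in Z$, and to establish (ii) using the $R$-support structure of the homogeneous pieces $A^\la_d$.

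For part (i), if $\la \notin Z$, Lemma \ref{simp}(iv) already supplies a graded isomorphism $A^\la \iso A^{\sg(\la)}[1]$ outright, and passing to semisimplifications gives $\bar{A}^\la \cong \bar{A}^{\sg(\la)}[1]$. If $\la \in Z$ (so $0 \in \chi_\la$), I will instead rely on the pair of graded isomorphisms $\phi^+: A^\la/A^{\la,0} \iso A^{\sg(\la),1}[1]$ and $\phi^-: A^{\sg(\la)}[1]/A^{\sg(\la),1}[1] \iso A^{\la,0}$ established in the paragraph preceding the lemma. Combined with the short exact sequences $0 \to A^{\la,0} \to A^\la \to A^\la/A^{\la,0} \to 0$ and its analogue for $A^{\sg(\la)}[1]$, and the fact that the multiset of simple subquotients is additive across short exact sequences, this identifies the simple subquotients of $A^\la$ with those of $A^{\sg(\la)}[1]$, yielding $\bar{A}^\la \cong \bar{A}^{\sg(\la)}[1]$. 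The second sentence of (i) then follows by iterating this isomorphism $n$ times.

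For the $(\Leftarrow)$ direction of (ii), I use the isomorphism $\bar{A}^\la \cong \bar{A}^\la[n]$ supplied by (i). The remark preceding the lemma shows that the distinct simple subquotients of $A^\la$ have pairwise disjoint supports, and in particular are pairwise non-isomorphic as graded modules, so the decomposition $\bar{A}^\la = \bigoplus_j S_j$ into simples is essentially unique. The isomorphism thus sends each $S_j$ to some simple summand of $\bar{A}^\la[n]$, necessarily of the form $T[n]$ for some simple summand $T$ of $\bar{A}^\la$, yielding the required pair $S = S_j$, $T$.

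For the $(\Rightarrow)$ direction, the key observation is that $A^\la_d \cong R/\sg^{-d}(\la)$ is a field, so any nonzero $R$-subquotient of $A^\la_d$ is supported at the single maximal ideal $\sg^{-d}(\la)$. Given simple subquotients $S, T$ of $A^\la$ with a graded $A$-isomorphism $S \cong T[n]$, pick any $d \in \delta(T)$; the isomorphism restricts to an $R$-linear isomorphism $S_{d-n} \cong T_d$ of nonzero $R$-modules, forcing their $R$-supports to agree, i.e.\ $\sg^{n-d}(\la) = \sg^{-d}(\la)$ and hence $\la = \sg^n(\la)$. The ``in particular'' clause is then immediate, since free action of $\sg$ forces $n = 0$, ruling out any shift-isomorphism between distinct simple subquotients. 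I expect the only real technical step to be verifying the matching of simple subquotients under $\phi^\pm$ in case (i) when $\la \in Z$, but this is essentially bookkeeping given the preparations made in the paragraph preceding the lemma.
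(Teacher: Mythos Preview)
Your proposal is correct and follows essentially the same route as the paper: the case split on $\la \in Z$ in (i) using $\phi^\pm$ and additivity of $\bar{(-)}$ across short exact sequences, and the $R$-module identification $A^\la_i \cong R/\sg^{-i}(\la)$ for the forward direction of (ii), are exactly what the paper does. The only difference is that you spell out the converse of (ii) in more detail (via uniqueness of the simple decomposition of $\bar{A}^\la$), whereas the paper simply says ``the converse follows from part (i)''; your expansion is correct, though note that ``disjoint supports'' should read ``disjoint $\delta$-sets'' to match the paper's terminology.
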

\begin{proof}
(i)  First, if $\la \notin Z$ then $A^\la \cong A^{\sg(\la)}[1]$ and therefore $\bar{A}^\la \cong \bar{A}^{\sg(\la}[1]$.  Now suppose $\la \in Z$.  For any $A$ module $M$ we can form $\bar{M} = \bigoplus S$ where the sum is over simple subquotients of $M$.  Of course, if $M'' = M/M'$ then $\bar{M} = \bar{M'} \oplus \bar{M''}$.  In the previous paragraph we saw that
\[ A^{\la,0} \oplus A^\la/A^{\la,0} \cong \left( \left( A^{\sg(\la)}/A^{\sg(\la),1} \right) \oplus A^{\sg(\la),1}\right)[1] \]
and therefore $\bar{A}^{\la} \cong \bar{A}^{\sg(\la)}[1]$. 

(ii)  Suppose that $S \to T[n]$ is an isomorphism and that $S_i \neq 0$.  Observe that as left $R$ modules, $S_i \cong A^{\la}_i \cong R/\sg^{-i}(\la)$ and $T_{i+n} \cong A^\la_{i+n} \cong R/\sg^{-i-n}(\la)$.  Since $R/\sg^{-i}(\la) = S_i \to T_{i+n} = R/\sg^{-i-n}(\la)$ is an isomorphism of $R$ modules we see that $\la = \sg^n(\la)$.  The converse follows from part (i).
\end{proof}

\begin{rmk}
Write $\bar{A}^\la = \dotsb \oplus S_{-1} \oplus S^\la \oplus S_1 \oplus \dotsb$, where each $S_j$ is simple.  Let $k = \min \delta(S_1)$.  By \ref{alamsubq}, $\bar{A}^\la \cong \bar{A}^{\sg^{-k}(\la)}[-k]$.  Therefore $S_1 \cong S^{\sg^{-k}(\la)}[-k]$.  Similarly, if $k = \max \delta(S_{-1})$ then $S_{-1} \cong A^{\sg^{-k}(\la)}[-k]$.  To compute the relevant integers we note that $A^\la$ has a unique maximal submodule which splits into a possibly trivial direct sum $M^- \oplus M^+$ where $\delta(M^-), \delta(M^+)$ consist of negative and positive integers respectively.  Each of $M^-,M^+$ has a decreasing filtration $F^iM^-, F^i M^+$ with simple quotients.  Now $F^iM^-/F^{i+1}M^- = S_{-i}$ and $F^iM^+/F^{i+1}M^+ = S_i$.  By \ref{simp}, if we enumerate $\chi_\la = \{ \dotsb < k_{-2} < k_{-1} \leq 0 < k_1 < k_2 < \dotsb \}$ then $\max \delta(F^i M^-) = k_{-i}-1$ and $\min \delta(F^i M^+) = k_i$.  Therefore 
\[
\bar{A}^\la = \left( \bigoplus_{k \in \chi_\la, k \leq 0}{S^{\sg^{1+|k|}(\la)}[1+|k|]} \right) \oplus S^\la \oplus \left( \bigoplus_{k \in \chi_\la, k > 0}{S^{\sg^{-k}(\la)}[-k]}\right).
\]
\end{rmk}

We can view $A[n]$ as an $(A,R)$ bimodule as follows.  The left action of $A$ is just the usual left action.  Let $x \in A[n]$ and let $r \in R$ then $x \cdot r = x \sg^{n}(r)$ where the undotted action is just multiplication.  We can identify $\Homgr(A[n],A[m]) \cong A_{m-n}$, where $a \in A_{m-n}$ corresponds to the map $x \mapsto xa$.  Therefore, to check that the maps $\Homgr(A[n],A[m])$ are compatible with the right $R$ module structure, we only need to check that $\phi^+$ and $\phi^-$, corresponding to right multiplication by $\tp$ and $\tm$ respect this structure.  For $x \in A$ and $r \in R$ we have $\phi^+( x \cdot r ) = \phi^+(xr) = xr\tp = x\tp \sg(r) = \phi^+(x) \cdot r$.   A similar check verifies that $\phi^-$ respects the right $R$ module structure.  We can formulate the relations \eqref{gwarelations} as 
\begin{equation}\label{phirelations}
\phi^+\circ \phi^-(x) = x \cdot v, \, x \in A[1], \quad \phi^- \circ \phi^+(x) = x \cdot v, \, x \in A.
\end{equation}
Now, because $A[n]$ are projective generators for the category of graded $A$ modules, any graded $A$ module is naturally an $(A,R)$ bimodule.  

If $x_1,\dotsc,x_k$ are homogeneous generators of a graded $A$ module $M$ then the set $\{\tp^{|i - \deg x_j|}x_j, \tm^{|\deg x_j-i|}x_j\}$ generates $M_i$ as both a left and right $R$ module.  Therefore each $M_i$ is a finitely generated left and right $R$ module.  Let $M$ be a graded $A$ module equipped with this bimodule structure.  Recall that if $\bar{M}$ is a finitely generated $R$ module then we have the support $\supp(\bar{M}) = \{\frk{p} \in \spec R : \ann(\bar{M}) \subset \frk{p}\}$.  For $x \in M_i$ and $r \in R$ we have $r x = x \sg^i(r)$.  So the support of $M_i$ as a left module differs from the support of $M_i$ as a right module by the action of $\sigma^i$ on $\spec R$. This means that many of the properties of the support, such as dimension do not depend on whether we view $M$ as a left or right module.  We will use this bimodule structure as a matter of course in \S\S 3, 4.  We can think of this natural right module structure as giving a map of $\C$ algebras from $R$ to the center of the category $A-\grMod$.

\begin{lemma} \label{artgeom}
If $M$ is a Artinian graded $A$ module then for every $i$ the support of $M_i$ in $\spec R$ is finite.  If $\chi_\la$ is finite for every $\la \in \maxspec R$ then the converse is true.
\end{lemma}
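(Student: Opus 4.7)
My plan is to translate the Artinian condition on $M$ as a graded $A$-module into an Artinian condition on each $M_i$ as an $R$-module. For the forward direction, given any $R$-submodule $N \subset M_i$ (the left and right $R$-submodule lattices on $M_i$ coincide, since they differ by the automorphism $\sg^i$), I would form the graded left $A$-submodule $AN \subset M$. Since $A_0 = R$ and $A$ is $\Z$-graded, $(AN)_i = A_0 N = RN = N$, so any strictly descending chain of $R$-submodules of $M_i$ lifts to a strictly descending chain of graded $A$-submodules of $M$. The Artinian hypothesis forces termination, so $M_i$ is Artinian over $R$. Because $R$ is a finitely generated commutative $\C$-algebra (hence Noetherian), any Artinian $R$-module has only finitely many associated primes, all maximal, and its support equals this finite set.

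For the backward direction (under the implicit hypothesis that $M$ is finitely generated --- without it, $(A^\la)^{\oplus \N}$ is a counterexample), I would reduce to the cyclic case $M = Ax$ with $x \in M_0$ after shifting, using that a finite sum of Artinian submodules is Artinian. Set $I = \ann_R(x)$. Since $M_0$ is a finitely generated $R$-module, $\supp(M_0)$ is a closed finite subset of $\spec R$ and therefore consists of maximal ideals; the same holds for $\supp(Rx) \subset \supp(M_0)$. Hence $R/I$ has finite length, and a composition series lifts to a chain $R = J_0 \supset J_1 \supset \cdots \supset J_m = I$ of ideals with $J_k/J_{k+1} \cong R/\la_k$ for maximal $\la_k$.

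I would then examine the filtration of $A/AI$ by the graded $A$-submodules $AJ_k/AI$. Choosing $y \in J_k$ whose image generates $J_k/J_{k+1}$, one checks $AJ_k = Ay + AJ_{k+1}$ (so the successive quotient is cyclic with generator $\bar y$) and $\la_k \bar y = 0$ in $AJ_k/AJ_{k+1}$ (since $\la_k J_k \subset J_{k+1}$). Hence $AJ_k/AJ_{k+1}$ is a quotient of $A^{\la_k} = A/A\la_k$, which is Artinian by Lemma \ref{simp}(ii) because $\chi_{\la_k}$ is finite. Since a finite filtration with Artinian quotients is Artinian, $A/AI$ is Artinian, and $M = Ax$ is a quotient (because $AI \subset \ann_A(x)$), hence Artinian as well. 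The main obstacle is this backward direction: both hypotheses must be invoked simultaneously --- finite support gives the finite-length composition series of $R/I$, and finiteness of $\chi_\la$ gives the Artinian property of each $A^{\la_k}$.
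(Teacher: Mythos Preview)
Your proof is correct. The backward direction follows the same strategy as the paper: reduce to the cyclic case, observe that the degree-zero piece has finite length over $R$, and filter so that the associated graded pieces are quotients of the modules $A^{\lambda_k}$, each Artinian by Lemma~\ref{simp}(ii). The paper organizes this via the Chinese remainder decomposition $A/A\prod\lambda_i^{e_i}\cong\bigoplus_i A/A\lambda_i^{e_i}$ and then filters each summand by powers of $\lambda_i$; you instead run a single composition series of $R/I$. These are equivalent bookkeeping for the same idea. You are also right to flag the implicit finite-generation hypothesis for this direction.

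Your forward direction is genuinely different from, and cleaner than, the paper's. The paper stabilizes the chain $M\supset M\lambda\supset M\lambda^2\supset\cdots$ and invokes Nakayama's lemma, after first asserting that an Artinian graded $A$-module is finitely generated. Your argument---that $N\mapsto AN$ embeds the $R$-submodule lattice of $M_i$ into the graded $A$-submodule lattice of $M$, so $M_i$ inherits DCC directly---bypasses both Nakayama and the finite-generation assertion. One small caution: the inference ``$M_i$ Artinian over Noetherian $R$ $\Rightarrow$ finite support'' is immediate with the localization definition of support, but with the paper's convention $\supp(M_i)=V(\ann M_i)$ it also requires $M_i$ to be finitely generated (an injective hull $E(R/\lambda)$ is Artinian with zero annihilator). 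Since the paper only defines $\supp$ for finitely generated $R$-modules, and each $M_i$ is finitely generated once $M$ is, this is harmless in context.
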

\begin{proof}
Assume that $M$ is a Artinian graded $A$ module.  Then for any ideal $J \subset R$ and $i \in \Z$, the chain of modules $M \supset M J \supset \dotsb M J^k \supset \dotsb$ has to stabilize.  Since $M$ is Artinian, $M$ is finitely generated so $M_i$ is finitely generated.  Specializing to $J=\lambda \in \maxspec R$ we see that $M \lambda^n = M \lambda^{n+1}$ for some $n$.  Either $M \la = M$ or else for every $i$, $\lambda^n M_i = 0$ by Nakayama's lemma.  This means that every maximal ideal of $R/\ann(M)$ is nilpotent and we conclude that $M_i$ has to have finite support.

Now we prove the converse under the additional assumption that $\chi_\la$ is finite for every $\la \in \maxspec R$.  Since $M$ is generated by finitely many cyclic modules it is enough to show that a cyclic module whose components have finite support is Artinian.  So consider $A/J$ where $J$ is a homogeneous left ideal.  Note that $J_0$ contains an ideal of the form $\prod \la_i^{e_i}$ for some $\la_i \in \maxspec R$.  We replace $J$ by $A \prod \la_i^{e_i}$ so that $A/J = \oplus_i A/A \la_i^{e_i}$.  $A/A \la_i^{e_i}$ clearly has a filtration by modules such that the quotients are $A_{\la_i}$.  By \ref{simp}, $A_{\la_i}$ is Artinian if and only if $\chi_{\la_i}$ is finite.
\end{proof}

\begin{cor}
Suppose that $\chi_\la$ is finite for every $\la \in \maxspec R$.  Let $J$ be a homogeneous left ideal of $A$ such that $A/J$ is Artinian.  Then $A/(A \cdot J_0)$ is Artinian.
\end{cor}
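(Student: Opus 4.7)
The idea is to reduce the claim to Lemma~\ref{artgeom}, which under the standing hypothesis that $\chi_\la$ is finite for every $\la$ tells us that a graded $A$-module is Artinian if and only if each of its homogeneous components has finite support in $\spec R$. So the plan is to compute the components of $A/(A \cdot J_0)$ as $R$-modules and verify their supports are finite.

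First I would analyze $(A \cdot J_0)_i$ as a submodule of $A_i$. Because $J_0 \subset A_0$, the degree-$i$ piece is just $A_i \cdot J_0$. Iterating the GWA relation $r\tp = \tp\sg(r)$ in the form $\tp^i s = \sg^{-i}(s)\tp^i$, and symmetrically $\tm^i s = \sg^i(s)\tm^i$, lets me pull elements of $J_0$ past the generator of $A_i$. Since $A_i$ is free of rank one as a left $R$-module, generated by $\tp^i$ (for $i\geq 0$) or $\tm^{-i}$ (for $i<0$), this gives the identification
\[
(A/(A \cdot J_0))_i \;\cong\; R/\sg^{-i}(J_0)
\]
of left $R$-modules, so the support of $(A/(A \cdot J_0))_i$ equals $V(\sg^{-i}(J_0)) = \sg^{-i}(V(J_0))$.

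Next I would extract the needed geometric input from the hypothesis. Lemma~\ref{artgeom} applied to the Artinian module $A/J$ tells us each $(A/J)_i$ has finite support; in degree zero this is $R/J_0$, so $V(J_0) \subset \spec R$ is a finite set. Since $\sg$ is an automorphism of $R$, it acts by bijections on $\spec R$, so each translate $\sg^{-i}(V(J_0))$ is finite as well. Combined with the identification above, every homogeneous component of $A/(A \cdot J_0)$ has finite support. Invoking Lemma~\ref{artgeom} a second time (in the converse direction, available thanks to the standing assumption that $\chi_\la$ is finite for every $\la$) concludes that $A/(A \cdot J_0)$ is Artinian.

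The only step requiring even minor care is the commutation computation giving $(A/(A \cdot J_0))_i \cong R/\sg^{-i}(J_0)$; once this is in hand, the passage from finiteness of $V(J_0)$ to finiteness of each of its $\sg$-translates is immediate, and the conclusion follows directly from Lemma~\ref{artgeom}.
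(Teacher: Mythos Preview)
Your proof is correct and follows essentially the same route as the paper: use Lemma~\ref{artgeom} on $A/J$ to see $R/J_0$ has finite support, identify the homogeneous pieces of $A/(A\cdot J_0)$, and apply the converse direction of Lemma~\ref{artgeom}. The only cosmetic difference is that the paper works with the right $R$-module structure, where all components $(A/AJ_0)_i$ are mutually isomorphic (so one support computation suffices), while you work with the left structure and see the $\sg^{-i}$-twist explicitly; this is the same argument viewed from two sides.
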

\begin{proof}
By \ref{artgeom} we know that $A_0/J_0$ has finite support.  Also by \ref{artgeom}, it is sufficient to check that $(A/A J_0)_i$ has finite support for all $i$.  Finally note that $(A/A J_0)_i \cong (A/A J_0)_{i+1}$ as a right $R$ module.
\end{proof}
\begin{cor}
 If $M$ is a finitely generated, Artinian, graded $A$ module then there is an $N = N(M)$ such that $\dim_\C M_n \leq N$ for all $n$.  
\end{cor}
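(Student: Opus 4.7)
The plan is to reduce to a cyclic quotient $A/J$ and then exploit the fact that each graded piece $A_n$ of $A$ is free of rank one as a right $R$-module.

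First, since $M$ is finitely generated, write $M = \sum_{j=1}^k A x_j$ with homogeneous generators $x_j$. Each cyclic submodule $Ax_j \subset M$ is Artinian, and $\dim_\C M_n \le \sum_{j=1}^k \dim_\C (Ax_j)_n$, so it suffices to bound each cyclic piece uniformly. Up to a degree shift (which does not affect the desired uniform bound), $Ax_j \cong A/J$ for a homogeneous left ideal $J \subset A$, so I may assume I am working with a cyclic Artinian module of the form $A/J$.

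By the preceding corollary, $A/(A \cdot J_0)$ is Artinian; and since $AJ_0 \subset J$, the module $A/J$ is a graded quotient of $A/(AJ_0)$. Hence it is enough to bound $\dim_\C (A/AJ_0)_n$ uniformly in $n$. The key observation is that $A_n \cong R$ as a right $R$-module, as recorded at the start of the section; under this identification $A_n \cdot J_0$ corresponds to the ideal $J_0 \subset R$, so
\[
(A/AJ_0)_n \;=\; A_n / (A_n J_0) \;\cong\; R/J_0
\]
as $\C$-vector spaces for every $n \in \Z$. In particular $\dim_\C (A/AJ_0)_n = \dim_\C R/J_0$ is a number independent of $n$.

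It remains to check that $\dim_\C R/J_0 < \infty$; this is the only nontrivial step. Because $A/AJ_0$ is Artinian, Lemma \ref{artgeom} forces its degree-zero component $R/J_0$ to have finite support in $\spec R$. As $R/J_0$ is a cyclic module over the finitely generated commutative $\C$-algebra $R$, being zero-dimensional makes it an Artinian ring, and by the Nullstellensatz its residue fields are all $\C$, so $R/J_0$ is finite-dimensional over $\C$. Summing across the cyclic pieces $x_1,\dotsc,x_k$ gives an explicit $N = N(M)$ and completes the argument. I do not anticipate a serious obstacle: once one sees $A_n$ as free of rank one on the right, the uniform bound falls out at once.
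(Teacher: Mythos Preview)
Your approach is essentially the same as the paper's: reduce to a cyclic module $A/J$, pass to a larger cyclic quotient $A/AI$ with $I \subset R$ an ideal of finite colength, and use that $A_n$ is free of rank one on the right to identify $(A/AI)_n \cong R/I$. The paper takes $I = \prod \lambda_i^{e_i}$ (a product of powers of the maximal ideals in the support of $(A/J)_0$), while you take $I = J_0$ directly; your choice is arguably cleaner.

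There is, however, one genuine slip. You invoke the \emph{preceding} corollary to assert that $A/AJ_0$ is Artinian, but that corollary carries the standing hypothesis ``$\chi_\lambda$ is finite for every $\lambda \in \maxspec R$,'' which is not assumed here. Fortunately you do not need $A/AJ_0$ to be Artinian at all. The only place you use it is to conclude that $R/J_0$ has finite support, and this follows immediately from Lemma~\ref{artgeom} applied to the module $A/J$ itself (which \emph{is} Artinian by hypothesis): its degree-zero piece is $(A/J)_0 = R/J_0$. With that one-line fix your argument goes through, and the paper's proof does exactly the same thing --- it extracts the finite support of $R/J_0$ from Lemma~\ref{artgeom} applied to $A/J$, not from the intervening corollary.
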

\begin{proof}
 It is sufficient to show that this is true for a Artinian graded quotient of $A$ itself.  Let $J$ be a homogeneous left ideal such that $A/J$ is Artinian.  By \ref{artgeom}, $J$ contains $J' = A \prod \la_i^{e_i}$ for some $\la_i \in \maxspec R$ and thus $A/J' = \bigoplus_i A/A\la_i^{e_i}$ maps onto $A/J$.  Finally note that $(A/A\la_i^{e_i})_j$ is isomorphic to $R/\la_i^{e_i}$ as a right $R$ module and therefore $\dim_\C (A/J)_j \leq \dim_\C (A/A\la_i^{e_i})_j = \dim_\C R/\la_i^{e_i} < \infty$.  
\end{proof}

We now describe a \emph{duality functor} on $\aart$.  Let $\iota:A \to A^{op}$ be the anti-involution of $A$ which is the identity on $R$ and satisfies $\iota(\tp) = \tm, \iota(\tm) = \tp$.  Note that $\iota$ reflects the grading in the sense that $\iota(A_n) = A_{-n}$.  We define the duality functor taking $M \mapsto M^*$ where $M^*_n = \Hom_\C(M_n, \C)$ and for $x \in A_i$, $\phi \in M^*_j$ and $m \in M_{i+j}$ we have $(x\phi)(m) = \phi(\iota(x)m)$.  The right module structure is the obvious one, $(\phi a)(m) = \phi(ma)$.  Observe that $(M[n])^* = M^*[n]$ and that $M^*$ is simple if and only if $M$ is simple.  Of course, $M^*$ is defined for any graded $A$ module $M$, but $M^{**}$ will not be isomorphic to $M$ if $M$ does not have finite dimensional homogeneous components.

Let $\aart$ be the category of Artinian, graded $A$ modules.  As an application of the last two lemmas and a few more, we will show how to decompose $\aart$ using points of $\maxspec R$ and the $\sg$ action.  Say that two simple subquotients $S,T$ of $A^\la$ are \emph{adjacent} if $\delta(S \oplus T)$ is an interval.  An interval is a subset of $\Z$ of the form $\{k \in \Z : n \leq k \leq N \}$ where we allow $n = -\infty$ and $N = \infty$.  Recall that if $S$ and $T$ are distinct simple subquotients then $\delta(S) \cap \delta(T) = \empt$.  Therefore we can define a total order on the simple subquotients by setting $S < T$ if $i < j$ for any $i \in \delta(S)$ and $j \in \delta(T)$.  Intuitively, $S$ sits to the left of $T$.  Then the adjacent modules are those which are adjacent with respect to this ordering.  We say that an arbitrary pair of simple $A$ modules $S,T$ is adjacent if they are adjacent simple subquotients for some $A^\la$.

In order to do some homological algebra, let us fix notation.  For graded left $A$ modules $M,N$ let $\Homgr(M,N)$ be the space of degree preserving module maps.  Let $\Extgr^*$ be the derived functor of $\Homgr$ in the category of all graded modules.  As in the ungraded situation, $\Extgr^p(M,N)$ is the space of equivalence classes of extensions of length $p$ of $M$ by $N$.  Observe that for $M',M''$ Artinian graded modules if $0 \to M \to M \to M'' \to 0$ is a short exact sequence then $\dim_\C M_i < \infty$ for each $i$.  Combining this with the fact that $(-)^*$ is exact we see that $(-)^*$ gives an isomorphism $\Extgr^1(M'',M') \cong \Extgr^1(M'^*,M''^*)$.

\begin{lemma}\label{simpleselfduality} 
If $S$ is a simple module then $S \cong S^*$.
\end{lemma}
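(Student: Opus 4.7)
\medskip

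The plan is to reduce to showing $(S^\la)^* \cong S^\la$ for each $\la \in \maxspec R$ and then exploit the universal property of $A^\la$ provided by Lemma \ref{simp}(iii). By Lemma \ref{simp}(iii), every simple graded $A$-module has the form $S^\la[i]$, and the observation in the preceding paragraph that $(M[n])^* = M^*[n]$ lets us shift the problem so that $i=0$. So we are reduced to producing an isomorphism $(S^\la)^* \iso S^\la$.

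Next, I would unpack the definition of the duality functor on a generator in degree $0$. Pick $0 \neq e_0 \in S^\la_0$ (it is nonzero since the unique maximal submodule of $A^\la$ is contained in $A^{\la,+} \oplus A^{\la,-}$, which lives in nonzero degree). Let $e_0^* \in (S^\la)^*_0$ be the dual functional. Because $\iota$ is the identity on $R$, the definition $(r \phi)(m) = \phi(\iota(r)m)$ gives, for $r \in R$, the identity $(r \cdot e_0^*)(e_0) = e_0^*(r e_0)$. Since $R$ acts on $S^\la_0 \cong R/\la$ through the residue character, we get $\la \cdot e_0^* = 0$.

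Now map $A \to (S^\la)^*$ by $1 \mapsto e_0^*$. The previous paragraph shows this factors through $A^\la = A/A\la$, producing a nonzero map $A^\la \to (S^\la)^*$. As noted in the excerpt, $(S^\la)^*$ is simple because $S^\la$ is, so this map is surjective. Its kernel is a maximal submodule of $A^\la$; but by Lemma \ref{simp}(iii) $A^\la$ has a unique maximal submodule and the corresponding quotient is $S^\la$. Hence $(S^\la)^* \cong S^\la$, completing the reduction.

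I do not expect any of the steps to be genuinely hard; the only place to be careful is the $R$-module calculation showing $\la \cdot e_0^* = 0$, since this relies on $\iota$ being the identity on $R$ and on the identification of $S^\la_0$ with the residue field of $\la$. That is exactly why the statement is specifically for \emph{simple} modules, where the degree-$0$ character is a one-dimensional $R$-module and the duality is transparent.
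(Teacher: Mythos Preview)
Your proof is correct and follows essentially the same approach as the paper's: reduce to $S^\la$ via Lemma~\ref{simp}(iii) and shift-compatibility of duality, pick a nonzero functional in $(S^\la)^*_0$, verify it is killed by $\la$ so that the cyclic map $A \to (S^\la)^*$ factors through $A^\la$, and conclude by uniqueness of the simple quotient. You are a bit more explicit about why $S^\la_0 \neq 0$ and about the $R$-action computation, but the argument is the same.
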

\begin{proof}
By Lemma \ref{simp}, every simple module is a shift of a module of the form $S^\la$.  Therefore it suffices to check that $(S^\la)^* \cong S^\la$.  Let $0 \neq \phi \in (S^\la)^*_0$ and let $A \onto (S^\la)^*$ be the homomorphism defined by $a \mapsto a \phi$.  Then for $m \in (S_\la)_0$ we have $(a\phi)(m) = \phi(am) = 0$ for all $a \in \la$.  Hence $(S^\la)^*$ is a simple quotient of $A^\la$ and must be isomorphic to $S^\la$.
\end{proof}

\begin{lemma}\label{nontrivialext}
Given two adjacent simple subquotients $S,T$ of $A^\la$ there is a nonzero extension class in $\Extgr^1(S,T)$.
\end{lemma}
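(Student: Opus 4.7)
The plan is to exhibit the extension directly inside $A^\la$, using the explicit description of the submodule lattice from Lemma \ref{simp}. Enumerate $\chi_\la = \{\dotsb < k_{-2} < k_{-1} \leq 0 < k_1 < k_2 < \dotsb\}$, so that (as pointed out in the remark after that lemma) every simple subquotient of $A^\la$ other than $S^\la$ has the form $A^{\la,k_i}/A^{\la,k_{i+1}}$ for positive $i$ or the analogous quotient on the negative side. The simple subquotients are totally ordered by $\delta$, and two of them are adjacent precisely when their $\delta$-intervals are consecutive in $\Z$; write $S<T$ for the pair in question with $S$ on the left.

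I will first handle the main case where both $S$ and $T$ lie on the positive side, say $S = A^{\la,k_i}/A^{\la,k_{i+1}}$ and $T = A^{\la,k_{i+1}}/A^{\la,k_{i+2}}$. Set $E := A^{\la,k_i}/A^{\la,k_{i+2}}$; the obvious inclusion--quotient gives a short exact sequence $0 \to T \to E \to S \to 0$ and therefore a class $[E] \in \Extgr^1(S,T)$. The cases where the adjacent pair straddles degree $0$ (so one simple is $S^\la$) or where both simples lie on the negative side are handled by the same recipe, replacing $A^{\la,k_i}$ by the appropriate submodule on the other side and, where needed, exchanging the roles of $\tp$ and $\tm$ in the computation below.

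The main obstacle is showing $[E] \neq 0$, i.e.\ that $E$ is not isomorphic to $S \oplus T$. I would pin this down by examining the $\tp$-action across the junction at degree $k_{i+1}$: in $E$ the degree $k_{i+1}-1$ component is spanned by the class $[\tp^{k_{i+1}-1}]$, and $\tp \cdot [\tp^{k_{i+1}-1}] = [\tp^{k_{i+1}}]$, which is a nonzero generator of $T_{k_{i+1}}$ inside $E$. In a hypothetical splitting $S \oplus T$, however, the degree $k_{i+1}-1$ component lies wholly in $S$ (since $T_{k_{i+1}-1} = 0$), and $\tp$ is forced to send it into $S_{k_{i+1}} = 0$. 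This contradiction shows the extension is nontrivial.

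Finally, the lemma is symmetric in $S$ and $T$, so to obtain a nonzero class in the opposite ordering I will invoke Lemma \ref{simpleselfduality} together with the observation (made just before that lemma) that $(-)^*$ induces an isomorphism $\Extgr^1(M,N) \cong \Extgr^1(N^*,M^*)$; since simples are self-dual, nonvanishing of $\Extgr^1(S,T)$ yields nonvanishing of $\Extgr^1(T,S)$. The substantive content of the argument lives entirely in the non-split check of the previous paragraph; the construction of $E$ and the appeal to duality are routine given the structural results already proved.
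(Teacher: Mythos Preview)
Your proof is correct and follows essentially the same strategy as the paper: build the extension as a length-two subquotient of $A^\la$ carved out by consecutive elements of $\chi_\la$, then argue non-splitting, and use self-duality of simples to pass between $\Extgr^1(S,T)$ and $\Extgr^1(T,S)$.

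The only notable difference is in how non-splitting is verified. You give a direct computation: in $E$ the operator $\tp$ moves a nonzero element from degree $k_{i+1}-1$ into degree $k_{i+1}$, whereas in $S\oplus T$ the degree $k_{i+1}-1$ component sits entirely in $S$ and is annihilated by $\tp$ since $S_{k_{i+1}}=0$. The paper instead appeals to the submodule lattice from Lemma~\ref{simp}: on the positive (or negative) side the submodules of $A^{\la,k}$ are totally ordered, so a splitting would require two incomparable submodules; in the straddling case $A^\la$ has a unique maximal proper submodule, so it cannot be written as a sum of two proper submodules. Both arguments are short and amount to the same underlying fact; yours is more hands-on, the paper's is slightly more structural.
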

\begin{proof}
Every simple subquotient of $A^\la$ other than the simple quotient is a subquotient of $A^{\lambda,k}$ for some $k \in \chi_\la$.  Since $\Extgr^1(S,T) = \Extgr^1(T^*,S^*) = \Extgr^1(T,S)$, we can interchange $T$ and $S$ if we want.  We suppose that $S < T$.  We deal with two cases.  Say $T < S^\lambda$.  Then there are $A^{\la,k}$ and $A^{\la,l}$ such that we have a short exact sequence $0 \to S \to A^{\la,k}/A^{\la,l} \to T \to 0$.  This cannot be split because there are no incomparable submodules of $A^{\la,k}$, or in other words Lemma \ref{simp} implies that if $M,M' \subset A^{\la,k}$ then $M \subset M'$ or $M' \subset M$.  Thus the exact sequence determined by the module is nontrivial in $\Extgr^1(T,S)$.  If $S^\la < S$ then a similar construction gives a nonzero extension class in $\Extgr^1(S,T)$.  Suppose that $S = S^\lambda$.  Then there is a submodule $M$ and a short exact sequence $0 \to T \to A^\la/M \to S \to 0$.  If this were split then $A^\la = M_1 + M_2$ for two proper submodules.  Since $A^\la$ has a unique maximal submodule, this cannot occur.  Thus, our exact sequence defines a nonzero class in $\Extgr^1(S,T)$.  We find ourselves in similar circumstances when $T = S^\lambda$, and obtain a nonzero class in $\Extgr^1(T,S)$.
\end{proof}

\begin{thm} \label{extcomp} Let $\la,\mu \in \maxspec R$.  If $\mu \neq \sg^n(\la)$ then $\Extgr^1(S^\la,S^\mu[n]) = 0$.  If $\mu = \sg^n(\la)$ and $\la \notin Z^\sg$ then $\Extgr^*(S^\la,S^\mu[n]) \cong \Ext_R^*(R/\la,R/\la)$.  If $\la \in Z^\sg$ and we set $\mu = \sg^n(\la)$, then $\Extgr^1(S^\la,S^\mu[n]) \neq 0$ only if $S^\la$ and $S^\mu[n]$ are either adjacent or isomorphic.
\end{thm}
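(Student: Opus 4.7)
The plan is to exploit the identification $A^\la = A \otimes_R R/\la$, where $R/\la$ is placed in degree $0$. Since $A$ is free as a right $R$-module on $\{\tp^i\}_{i \geq 0} \cup \{\tm^i\}_{i > 0}$, tensoring a projective resolution of $R/\la$ over $R$ with $A$ produces a projective resolution of $A^\la$ in graded $A$-modules, and the tensor--hom adjunction then collapses to the clean formula
\[
\Extgr^*_A(A^\la, M) \;\cong\; \Ext^*_R(R/\la,\, M_0).
\]
For $M = S^\mu[n]$, the degree-$0$ component $M_0 = S^\mu_n$ is either zero (when $n \notin \delta(S^\mu)$) or isomorphic to $R/\sg^{-n}(\mu)$ as a left $R$-module. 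The right-hand side therefore vanishes whenever $\mu \neq \sg^n(\la)$ (distinct maximal ideals have no $\Ext$) and equals $\Ext^*_R(R/\la, R/\la)$ when $\mu = \sg^n(\la)$. Because $\la \notin Z^\sg$ implies $A^\la = S^\la$ by Lemma~\ref{simp}(ii), this already establishes part (ii) and part (i) in the case $\la \notin Z^\sg$.

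For part (i) in the remaining case $\la \in Z^\sg$, I apply the long exact sequence associated to $0 \to K \to A^\la \to S^\la \to 0$, where $K = A^{\la,k_1} \oplus A^{\la,k_{-1}}$ is the radical of $A^\la$, with $k_{\pm 1}$ the elements of $\chi_\la$ closest to $0$ on either side (a summand may be absent). Combined with the vanishing $\Extgr^*_A(A^\la, S^\mu[n]) = 0$ just established, this collapses to
\[
\Extgr^1_A(S^\la, S^\mu[n]) \;\cong\; \operatorname{coker}\bigl(\Homgr_A(A^\la, S^\mu[n]) \to \Homgr_A(K, S^\mu[n])\bigr).
\]
By Lemma~\ref{simp}(i), each summand $A^{\la,k}$ of $K$ is uniserial, so any nonzero map into the simple module $S^\mu[n]$ must factor through its unique simple quotient. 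These two simple quotients are precisely the simples $S^{\sg^{-k_1}(\la)}[-k_1]$ and $S^{\sg^{1-k_{-1}}(\la)}[1-k_{-1}]$ flanking $S^\la$ in the decomposition of $\bar A^\la$ from the remark following Lemma~\ref{alamsubq}; comparing degree-$0$ annihilators shows neither is isomorphic to $S^\mu[n]$ when $\mu \neq \sg^n(\la)$, so $\Homgr_A(K, S^\mu[n]) = 0$ and the cokernel vanishes.

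For part (iii), with $\mu = \sg^n(\la)$ and $\la \in Z^\sg$, the same long exact sequence expresses $\Extgr^1_A(S^\la, M)$ (for $M = S^\mu[n]$) as an extension of $\ker\bigl(\Extgr^1_A(A^\la, M) \to \Extgr^1_A(K, M)\bigr)$ by $\operatorname{coker}\bigl(\Homgr_A(A^\la, M) \to \Homgr_A(K, M)\bigr)$. A nonvanishing cokernel requires $\Homgr_A(K, M) \neq 0$, and the uniserial argument from part (i) then identifies $S^\mu[n]$ with one of the two simples adjacent to $S^\la$ in $\bar A^\la$, yielding the adjacency conclusion. A nonvanishing kernel requires $\Extgr^1_A(A^\la, M) = \Ext^1_R(R/\la, M_0) \neq 0$, hence $M_0 \neq 0$, i.e., $n \in \delta(S^\mu)$. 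The main obstacle is then a combinatorial check using $\chi_\mu = \chi_\la + n$ showing that this condition forces $-n \in \delta(S^\la)$, which by the description of $\delta(S^\la)$ in the remark is equivalent to $\delta(S^\la) = \delta(S^\mu) - n$ and hence to $S^\la \cong S^\mu[n]$ — the isomorphic case. Everything else is a direct application of the adjunction formula and the long exact sequence.
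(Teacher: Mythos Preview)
Your approach is correct but genuinely different from the paper's in parts (i) and (iii). You organize everything around the single adjunction $\Extgr^*_A(A^\la, M) \cong \Ext^*_R(R/\la, M_0)$ and the long exact sequence of $0 \to K \to A^\la \to S^\la \to 0$. The paper instead handles (i) by working directly with an extension $0 \to S^\mu[n] \to M \to S^\la \to 0$: since $M$ has finite support, an idempotent in an Artinian quotient of $R$ separates the two supports and splits $M$. For (iii) the paper again analyzes the extension module $M$ concretely, realizing it as a cyclic quotient of $A$ and branching on $\dim_\C M_0$: dimension $1$ makes $M$ a quotient of $A^\la$ (adjacency), while dimension $2$ forces $\sqrt{J_0}=\la$ and hence $T\cong S^\la$ (the isomorphic case). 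Part (ii) is identical in both proofs. Your route is more uniform and the adjunction is a reusable tool; the paper's is more elementary and avoids chasing the long exact sequence.

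Two small remarks. First, your phrase ``comparing degree-$0$ annihilators'' is imprecise: the simples adjacent to $S^\la$ in $\bar A^\la$ have \emph{empty} degree-$0$ component, so you should compare annihilators in a degree that actually lies in the common support (equivalently, unshift one side and then compare at degree $0$); the conclusion $\mu = \sg^n(\la)$ follows either way. Second, the ``combinatorial check'' you flag in the isomorphic case is immediate from Lemma~\ref{alamsubq}(i): since $\bar A^\la \cong \bar A^\mu[n]$, the module $S^\mu[n]$ appears among the simple subquotients of $A^\la$, and those have pairwise disjoint $\delta$'s, so the condition $0\in\delta(S^\mu[n])$ forces $S^\mu[n]\cong S^\la$.
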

\begin{proof}
Let $\la,\mu \in \maxspec R$ and assume that $\mu \neq \sg^n(\la)$.  Consider an extension 
\begin{equation}\label{extension}
 0 \to S^\mu[n] \to M \to S^\la \to 0.
\end{equation}
Note that $M$ must have finite support.  Hence, there exists an ideal $J \subset R$ such that $R/J$ is Artinian and $M,S^\la,$ and $S^\mu[n]$ are all $R/J$ modules.  There are distinct maximal ideals $\la = \la_0, \la_1, \dotsc, \la_N = \mu$ and positive integers $e_j$ such that $R/J = \oplus_{j=1}^N {R/\la_j^{e_j}}$.  Let $\pi \in R/J$ be the idempotent corresponding to $1 \in R/\la^{e_0}$.  Then we have two exact sequences
\begin{gather*}
0 \to S^\mu[n] \cdot \pi \to M \cdot \pi \to S^\la \cdot \pi \to 0 \\
0 \to S^\mu[n] \cdot (1-\pi) \to M \cdot (1-\pi) \to S^\la \cdot (1-\pi) \to 0
\end{gather*}
whose sum is \eqref{extension}. Since $S^\la \cdot (1-\pi) = 0$ and $S^\mu[n]\cdot \pi = 0$, this means that \eqref{extension} is split.  Now, \eqref{extension} was arbitrary so we see that $\Extgr^1(S^\la,S^\mu[n]) = 0$.

Suppose that $\la \notin Z^\sg$ and let $\mu = \sg^n(\la)$.  Let $F^\bullet \onto R/\la$ be a free resolution of $R$ modules.  Then since $A$ is $R$-free on the right, $A \tensor_R F^\bullet \onto A^\la = S^\la$ is a free resolution.  Since $\mu = \sg^n(\la)$ we know that $n \in \delta(S^\mu)$ and therefore $\Homgr(A,S^\mu[n]) = \Hom_R(R,R/\sg^{-n}(\mu))$  and it follows that $H^*( \Homgr(A \tensor_R F^\bullet, S^\mu[n])) = \Ext^*_R(R/\la,R/\sg^{-n}(\mu)) = \Ext^*_R(R/\la,R/\la)$.  

Finally, let $S$ and $T$ be simple.  Suppose that the extension $0 \to T \to M \to S \to 0$ is not split.  We can shift $S$ until $S \cong S^\la$ for some $\la$.  The map $A \onto S$ lifts to a map $A \to M$.  This map must be surjective since otherwise our extension would be split.  Therefore $M = A/J$ for some homogeneous left ideal $J \subset A$.  If $\dim_\C M_0 = 1$ then $M$ is in fact a quotient of $A_\la$ and $S$ and $T$ are adjacent.  Otherwise $\dim_\C M_0 = 2$ and $M$ is a quotient of $A/A J_0 $ where $J_0 \subset \la$ has codimension 1.  If $\sqrt{J_0} \neq \la$ then $R/J_0 = R/\la \oplus R/\la'$ and $M = S\oplus T$.  Otherwise $A\la/A J_0 = A \tensor_R \la/J_0 \cong A_\la$ because $\la/J_0 \cong R/\la$.  This means that there is a map $A^\la \onto T$ and since $T$ is simple $T \cong S^\la$ and $S \cong T$.
\end{proof}

\begin{rmk}\label{notinzsg} One consequence of this Theorem is that it is possible to tell from the homological algebra alone whether or not a simple graded module is a subquotient of $A^\la$ for some $\la \in Z^\sg$.  Recall that if $\la \notin Z^\sg$ then $S^\la \cong S^{\sg^n(\la)}[n]$.  By Theorem \ref{extcomp} this means that whenever $T$ is a simple graded module such that $\Extgr^1(S^\la[n],T) \neq 0$ we have $T \cong S^\la[n]$.  On the other hand, by Lemmas \ref{nontrivialext} and \ref{simp} if $\la \in Z^\sg$ then there is a simple graded module $T \ncong S^\la[n]$ (but adjacent to $S^\la[n]$) such that $\Extgr^1(S^\la[n],T) \neq 0$.  So a simple graded module $S$ corresponds to $\la \notin Z^\sg$ if and only if whenever $\Ext^1_{\gr}(S,T) \neq 0$ for a simple module $T$ we have $S \cong T$.  The following is a consequence of Lemma \ref{alamsubq}, Lemma \ref{nontrivialext}, and Theorem \ref{extcomp}.
\end{rmk}

\begin{prop}
For $\la \in \maxspec R$ and $n \in \Z$ let $\A_{\la,n}$ be the full subcategory of Artinian graded modules whose simple subquotients are among the simple subquotients of $A^\la[n]$, i.e. summands of $\bar{A}^\la[n]$.  We have $\A_{\la,n} = \A_{\mu,m}$ if and only if $\mu = \sg^{m-n}(\la)$.  Any Artinian graded module $M$ can be decomposed as a direct sum $\bigoplus M_{\la,n}$ with $M_{\la,n} \in \A_{\la,n}$.  Let $\la \notin Z^\sg$.  If $\lambda$ is a regular point of $\maxspec R$ then the homological dimension of $\A_{\la,n}$ is finite and $\dim \A_{\la,n} \leq \dim_\la \spec R$, the Krull dimension of the local ring $R_\la$.
\end{prop}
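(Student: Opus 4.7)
The proposition has three parts, which I would treat in order using the Remark immediately above together with the preceding homological calculations.

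For part (1), the implication $\mu = \sg^{m-n}(\la) \Rightarrow \A_{\la,n} = \A_{\mu,m}$ follows by iterating Lemma \ref{alamsubq}(i) to get $\bar{A}^\la[n] \cong \bar{A}^\mu[m]$, so both categories are cut out by the same list of simple subquotients. Conversely, if $\A_{\la,n} = \A_{\mu,m}$, then $S^\mu[m]$ appears as a simple subquotient of $\bar{A}^\la[n]$; the explicit enumeration in the Remark identifies it with $S^{\sg^j(\la)}[j+n]$ for some $j \in \Z$, and comparing the underlying maximal ideals via Lemma \ref{alamsubq}(ii) forces $\mu = \sg^j(\la)$ and $m = n+j$.

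For part (2), the crucial input is that $\Extgr^1(S,T) = 0$ whenever simples $S, T$ lie in distinct categories $\A_{\la,n}, \A_{\mu,m}$; this is immediate from Theorem \ref{extcomp} combined with part (1), since non-vanishing would force a $\sg$-shift relation putting $S$ and $T$ in the same block. Given this block-vanishing, define $M_{\la,n}$ to be the maximal submodule of $M$ whose simple subquotients lie in $\A_{\la,n}$. The $M_{\la,n}$ are pairwise disjoint, and if their sum were proper the Artinian quotient would have a simple subobject $S$ in some block $(\la_0, n_0)$; lifting $S$ to a submodule of $M$ and applying block-vanishing to split off the pieces from other blocks, the remaining extension by $M_{\la_0,n_0}$ would enlarge $M_{\la_0,n_0}$, a contradiction. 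Artinianness ensures only finitely many summands are nonzero.

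For part (3), assume $\la \notin Z^\sg$ and that $R_\la$ is regular of Krull dimension $d = \dim_\la \spec R$. Lemma \ref{simp} parts (ii) and (iv) give $A^\la = S^\la$ simple and $S^\la \cong S^{\sg^k(\la)}[k]$ for all $k$, so $\A_{\la,n}$ has a unique simple up to isomorphism. A direct inspection of $A^\la$ shows that each graded component has right $R$-annihilator equal to $\la$, and because $\sg^k(v) \notin \la$ for every $k$, the relations $\tm\tp = \sg(v)$ and $\tp\tm = v$ force $\tp$ and $\tm$ to act as mutually inverse isomorphisms (up to units) between consecutive graded components of any $M \in \A_{\la,n}$. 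Consequently $M \mapsto M_0$ is an equivalence between $\A_{\la,n}$ and the category of finite length $R_\la$-modules, whose quasi-inverse places a copy of $N$ in every degree, sets $\tm$ equal to the identity, and defines $\tp$ at degree $i$ as multiplication by the unit $\sg^{i+1}(v)$. Since $R_\la$ has global dimension $d$ by Auslander--Buchsbaum, the bound $\dim \A_{\la,n} \leq d$ follows.

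The main technical wrinkle is verifying that the prescribed quasi-inverse in part (3) is well-defined, which amounts to checking that the proposed $\tp, \tm$ satisfy the GWA relations; this requires carefully tracking the $\sg$-twisted interplay between the left and right $R$-actions on each graded piece as described in \S 2. Parts (1) and (2) are relatively mechanical consequences of the tools already established, the only subtlety in (2) being the inductive extraction of blocks from a possibly infinite-length Artinian module, which is handled by the socle-based argument sketched above.
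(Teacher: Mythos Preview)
Your treatment of part~(1) matches the paper's.  For parts~(2) and~(3) your arguments are correct in spirit but take a different route from the paper, and in part~(2) there is a small gap worth noting.

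For part~(2), the paper does not use $\Extgr^1$-vanishing at all.  It simply observes that every $M\in\A_{\la,n}$ has right $R$-support equal to the single point $\{\sg^{-n}(\la)\}$, so modules from distinct blocks have disjoint support; since an Artinian graded module has finite right support (Lemma~\ref{artgeom}), one splits $M$ directly via orthogonal idempotents in $R/\ann(M)$.  Your block-by-block extraction via $\Extgr^1$-vanishing is conceptually fine, but the step ``applying block-vanishing to split off the pieces from other blocks'' needs $\Extgr^1(S,M_{\mu,m})=0$, not just $\Extgr^1$-vanishing between simples, and Theorem~\ref{extcomp} gives only the latter.  Passing from simples to arbitrary $M_{\mu,m}$ requires either a finite composition series (which you have not established in general) or the disjoint-support observation---at which point you have recovered the paper's argument anyway.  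So the paper's approach is both shorter and avoids this issue.

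For part~(3), the paper again argues in one line: Theorem~\ref{extcomp} computes $\Extgr^*(S^\la,S^\la)\cong\Ext_R^*(R/\la,R/\la)$, and regularity of $R_\la$ bounds the right-hand side.  Your construction of an explicit equivalence $\A_{\la,n}\simeq R_\la\text{-mod}^{f.l.}$ via $M\mapsto M_0$ is a genuinely different and more structural argument; it gives more information (an actual identification of the block, not just a dimension bound) at the cost of verifying the GWA relations for the quasi-inverse, which you correctly flag as the main technical point.
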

\begin{proof}
The first and last assertions are immediate from Lemma \ref{alamsubq} and Theorem \ref{extcomp}, respectively.  The second assertion follows from the observation if $\mu \neq \sg^{m-n}(\la)$ and $M \in \A_{\la,n}$ and $N \in \A_{\mu,m}$ then $\supp(M) = \{\sg^{-n}(\la)\}$ and $\supp(N) = \{\sg^{-m}(\mu)\}$ so $\supp(M) \cap \supp(N) = \empt$.
\end{proof}

\section[Category O]{Category $\O$}

We introduce a category $\Op$, the category of graded $A$ modules $M$ with the property that for every $m \in M$, $\tm^n m = 0$ for $n \gg 0$.  Such a module is called \emph{locally nilpotent}.  We define $\O$ to be the category of ungraded  locally nilpotent $A$ modules.  A finitely generated graded $A$ module $M$ belongs to $\O^+$ if and only if $-\infty < \inf \delta(M)$.  Our category $\Op$ behaves very similarly to the familar one associated to a semisimple Lie algebra if $\spec R$ is one dimensional.  See \cite{BGG}.  We reformulate some of the usual properties of $\Op$ so that they carry over to the situation when $\dim \spec R > 1$.  The category $\Op$ is our main object of interest and we will only mention $\O$ a few times, in order to compare it to $\Op$.  One remarkable property of $\O$ is that if our condition ($\ast$) below holds then instead of being Artinian, $\O$ is ``graded'' in the sense that it has a system of projective generators which are graded and such that every homomorphism between them is necessarily degree preserving.  Also, Theorem \ref{gr} gives a block decomposition for $\Op$ parameterized by pairs of a connected component of $Z$ and an integer.  Finally, we will need the fact that $A$ is noetherian, see \cite{Bav1}.

The idea of support and the induced action of $\sg$ on $\spec R$ will play a major role below.  Recall that $\sg$ induces an action $\Sg$ on $\spec R$ by $\Sg( \p ) = \sg^{-1}(\p)$.  For a graded $A$ module $M$, we think of $M$ as an $(A,R)$ bimodule. Let $\ann(M)$ denote the annihilator of $M$ as a \emph{right} $R$ module and $\supp(M) =  \{ \p \in \spec R : \ann(M) \subset \p\}$ is the support as a right module.  Since shifting changes the right module structure, $\ann(M[n]) = \sg^{-n}(\ann(M))$ and $\supp(M[n]) = \Sg^n(\supp(M))$. For a subset $X \subset R$ let $Z(U) = \{ \p \in \spec R : X \subset \p\}$ be the associated closed subset of $\spec R$, the zero locus of $X$.  For $\p \in \spec R$, $Z(\p)$ is the closure of the point $\p$ and is an irreducible subset of $\spec R$.

Let $M$ be a graded $A$ module.  If $\supp(M)$ is disconnected then the right $R$ module structure on $M$ factors through a quotient ring of the form $\prod_W{R_W}$ where $W$ runs over the set of connected components of $\supp(M)$.  We use an orthogonal system of idempotents $1_W = (0,\dotsc,1,\dotsc,0)$ in the quotient ring to define submodules $M_W = M\cdot 1_W$ such that $M = \bigoplus_W{ M_W }$.  From now on we say that $M_W$ is the \emph{summand of $M$ supported on $W$}.  

There are Verma modules in this situation, however they sit very deep inside $\Op$.  Let $\la \in \maxspec R$ and recall that $A^{\la,-1} :=\bigoplus_{i < 0} A^\la_i$ is a graded submodule of $A^\la$ if and only if $\la \in Z$.  In this case, define $V^\la := A^\la/A^{\la,-1}$ viewed as a graded module concentrated in non-negative degrees.  The Verma modules fit into a larger class of modules called big Verma modules that are defined in a similar way, except that instead of corresponding to maximal ideals, the big Verma modules correspond to arbitrary prime ideals.

\begin{defn}[Big Verma modules]
Let $\p \in \spec R$ such that $v \in \p$ and set $A^\p = A \tensor_R R/\p = A/A\p$.  Then $A^{\p,-1} = \bigoplus_{i < 0} A^\p_i$ is a submodule since $\tp \tm = v = 0$ in $R/\p$.  Set $V^\p = A^\p / A^{\p,-1}$ and observe that $V^\p$ is a non-negatively graded module.  We define $\chi_\p = \{ k \in \Z : k > 0, \sg^k(v) \in \p \}$ and $\chi'_\p = \chi_\p \cup \{\infty\}$.
\end{defn}

The big Verma modules are spread more evenly through $\Op$.  We make this precise in Proposition \ref{filt}.

\begin{lemma}
Let $k \geq 0$.  $V^{\p,k}= \bigoplus_{i \geq k} V^\p_i$ is a submodule of $V^\p$ if and only if $k \in \chi_p$.
\end{lemma}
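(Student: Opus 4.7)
The plan is to reduce the question to a single computation, the vanishing of $\tm \tp^k$ in $V^\p$.  Because $R$ acts in degree zero and $\tp$ raises degree by one, both of them preserve the subspace $V^{\p,k} = \bigoplus_{i \geq k} V^\p_i$ regardless of $k$.  Likewise $\tm$ sends $V^\p_i$ into $V^\p_{i-1}$, which still lies in $V^{\p,k}$ as long as $i > k$.  The only potential obstruction comes from the bottom layer: $V^{\p,k}$ is a submodule iff $\tm V^\p_k \subset V^{\p,k}$.  When $k \geq 1$, $V^\p_{k-1} \cap V^{\p,k} = 0$, so the condition collapses to $\tm V^\p_k = 0$.  (The case $k = 0$ gives $V^{\p,0} = V^\p$, which is tautologically a submodule, so the substantive content of the lemma lives in the range $k \geq 1$.)

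Next I would verify that $V^\p_k$ is cyclic as a left $R$-module, generated by the class of $\tp^k$, exactly as was done for the pieces of $A^\la$ in the introduction of those modules.  Consequently $\tm V^\p_k = 0$ if and only if $\tm \tp^k = 0$ in $V^\p$.  Using the GWA relation $\tm \tp = \sg(v)$, I compute
\[
\tm \tp^k = (\tm \tp)\tp^{k-1} = \sg(v)\,\tp^{k-1},
\]
an element of $V^\p_{k-1}$.  Under the identification $V^\p_{k-1} \cong R/\sg^{1-k}(\p)$ which sends the class of $\tp^{k-1}$ to $1$, this element corresponds to the class of $\sg(v)$.  It vanishes precisely when $\sg(v) \in \sg^{1-k}(\p)$, equivalently $\sg^k(v) \in \p$, i.e.\ $k \in \chi_\p$.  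Combining the two steps gives the claimed equivalence.

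The only thing to be careful about is bookkeeping the twists of the left $R$-module structures on the graded pieces $V^\p_j$ by powers of $\sg$, so that the image of $\tm \tp^k$ is located in the correct quotient of $R$; the substance of the argument is then a direct application of the defining relations \eqref{gwarelations}.  I do not expect any real obstacle beyond this routine check, and the proof should be short.
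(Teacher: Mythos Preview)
Your proposal is correct and follows essentially the same route as the paper: both reduce the question to whether $\tm V^\p_k = 0$, note that $V^\p_k$ is generated over $R$ by $\tp^k$, and then compute $\tm\tp^k = \sg(v)\tp^{k-1}$ to obtain the condition $\sg^k(v)\in\p$. The only cosmetic difference is that the paper pushes $\sg(v)$ across $\tp^{k-1}$ to write $\tp^{k-1}\sg^k(v)$ and reads off vanishing in $R/\p$, whereas you keep it on the left and read off vanishing in $R/\sg^{1-k}(\p)$; these are the same computation.
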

\begin{proof}
$V^{p, k}$ is a submodule of $V^\p$ if and only if $\tm (V^\p)_k = 0$.  Since $V^\p_k$ is generated by $\tp^k$ as an $R$ module and $\tm \tp^k = \sg(v) \tp^{k-1} = \tp^{k-1}\sg^k(v)$ we see that $\tm V^\p_k = 0$ if and only if $\sg^k(v) \in \p$.  
\end{proof}

\begin{defn}
It will be convenient to have uniform notation for certain quotients.  Let $k \in \chi'_\p$ and set $Q^{\p,k} = V^\p/V^{\p,k}$ for $k < \infty$ and $Q^{\p,\infty} = V^\p$.   Note that $\supp( Q^{\p,k}[n] ) = Z(\sg^{-n}(\p)) = \Sg^n Z(\p)$ and this is a subset of $Z^\sg$ that is closed in $\spec R$. 
\end{defn}

\begin{prop} \label{filt}
A finitely generated, graded module $M$ in $\O^+$ has a finite filtration where each successive quotient has the form $Q^{\p,k}[n]$ for some nonzero prime $\p \in \spec R, k \in \chi'_\p$, and $n \in \Z$.
\end{prop}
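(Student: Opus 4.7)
My plan is a cascade of reductions to the case where $M$ is a cyclic quotient of a big Verma module, followed by a structural induction on $\dim V(\p)$ controlled by the wall set $\chi_\p$. First, since $A$ is noetherian and $M$ is finitely generated, filtering by the submodules generated by initial segments of a fixed finite set of homogeneous generators reduces to $M = Ax$ cyclic. Because $M \in \Op$, there is a minimal $N \geq 0$ with $\tm^N x = 0$; setting $y := \tm^{N-1}x$ gives $\tm y = 0$, while $M/Ay$ is cyclic of nilpotency index $N-1$, so induction on $N$ reduces the problem to filtering $Ay$. Hence one may assume $M = Ay$ with $y \in M_d$ and $\tm y = 0$, whence $v \in I := \ann_R(y)$. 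I would then filter the noetherian module $R/I \cong M_d$ with successive quotients $R/\p_j$ for primes $\p_j \supset (v)$ and lift to $M^{(j)} := A \cdot N_j \subset M = A \cdot M_d$: each cyclic successive quotient is generated in degree $d$ by a class $\bar y_j$ with $\tm \bar y_j = 0$ (automatic from $(M^{(j)}/M^{(j-1)})_{d-1} = 0$) and $\ann_R(\bar y_j) = \p_j$ prime. This reduces the problem to $M \cong V^\p[-d]/K$ with $\p \ni v$ prime and $K \subset V^\p[-d]$ a graded submodule satisfying $K_d = 0$.

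\emph{Injectivity of $\tm$ off the walls.} Under the isomorphism $V^\p_i \cong R/\sg^{-i}(\p)$, a direct calculation identifies $\tm \colon V^\p_i \to V^\p_{i-1}$ with $[r] \mapsto [\sg(rv)]$; when $i \notin \chi_\p$, so $\sg^i(v) \notin \p$, this map is injective because $R/\sg^{-i}(\p)$ is a domain in which $v$ is a non-zerodivisor. Starting from $K_d = 0$ and climbing through this injectivity forces $K_{d+i} = 0$ for all $0 \leq i < k_1$, where $k_1 := \min \chi_\p$ (with $k_1 = \infty$ when $\chi_\p = \empt$, which then forces $K = 0$). Therefore $K \subset V^{\p, k_1}[-d]$.

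\emph{Exit cases and recursion.} If $K = 0$ then $M \cong Q^{\p, \infty}[-d]$, and if $K = V^{\p, k_1}[-d]$ then $M \cong Q^{\p, k_1}[-d]$; either way a one-step filtration suffices. Otherwise the short exact sequence
\[
0 \longrightarrow V^{\p, k_1}[-d]/K \longrightarrow M \longrightarrow Q^{\p, k_1}[-d] \longrightarrow 0
\]
reduces the problem to filtering the kernel, which under the canonical isomorphism $V^{\p, k_1}[-d] \cong V^{\sg^{-k_1}(\p)}[-d - k_1]$ is itself a cyclic big Verma quotient $V^{\p'}[-d']/K'$ with $\p' := \sg^{-k_1}(\p) \ni v$. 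Feeding this back into the argument produces two cases: (a) $K'_0 = 0$, in which the recursion merely advances to the next wall with the prime unchanged up to $\sg$-action; or (b) $K'_0 \neq 0$, in which reapplying the prime-filtration step yields primes strictly larger than $\p'$, making $\dim V(\cdot)$ strictly drop.

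\emph{Termination.} The main obstacle will be ensuring this recursion closes. I would arrange an outer induction on $\dim V(\p)$: case (b) is the dimension-dropping move, and case (a) keeps the dimension fixed. The crux is that case (a) can be iterated only finitely many times in a row, because $K \neq 0$ must be nonzero at some wall $k_j \in \chi_\p$ (otherwise the iterated $\tm$-injectivity argument propagates vanishing through every degree and forces $K = 0$), so after at most $j-1$ advance moves a strict-enlargement move occurs and the outer induction applies. The base case $\dim V(\p) = 0$ is immediate, since no strict prime overring of a maximal ideal is itself prime, so only advance moves arise, and these terminate by the previous argument.
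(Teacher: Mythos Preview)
Your argument is correct, but it takes a substantially different and longer route than the paper's.

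The paper exploits the Noetherian hypothesis on $A$ to sidestep your entire termination apparatus: it suffices to show that every nonzero finitely generated $M \in \Op$ contains \emph{some} submodule of the form $Q^{\p,k}[n]$, since then the Noetherian induction (take a maximal submodule admitting such a filtration; if proper, extend it using a $Q$-submodule of the quotient) closes immediately. To find that one submodule, the paper goes to the \emph{lowest} nonzero degree $k$ of $M$, picks an associated prime $\sg^k(\p)$ of $M_k$ (automatically containing $v$ since $\tm M_k = 0$), and obtains a cyclic submodule $A\cdot x \cong (V^\p/J)[-k]$ with $J_0 = 0$. If $J = 0$ this is already $Q^{\p,\infty}[-k]$; otherwise with $l = \min\delta(J)$ one uses that $\tp$ acts by isomorphisms on the graded pieces of $V^\p$ to locate a further submodule isomorphic to $Q^{\p,l}[-k]$, and the fact that this really is a $Q$ (i.e.\ that $l \in \chi_\p$) drops out because the kernel of the resulting map $V^\p \to V^\p/J$ is forced to be the submodule $V^{\p,l}$.

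By contrast you work top-down: reduce to cyclic, then to a $\tm$-killed generator, then to $V^\p[-d]/K$, and peel off $Q^{\p,k_1}$ from the top, recursing on the kernel. Your termination needs the two-tier induction on Krull dimension (case (b)) and wall position (case (a)), together with the observation that a nonzero $K$ with $K_0=0$ must be nonzero at some wall. All of this is sound --- in the base case $\dim V(\p)=0$ the graded pieces of $V^\p$ are one-dimensional, so every $K$ is already some $V^{\p,l}$ and case (b) never arises --- but it is considerably more bookkeeping than the paper's single-submodule-plus-Noetherian trick. What your approach buys is an explicit, essentially algorithmic description of the filtration; what the paper's approach buys is a four-line proof.
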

\begin{proof}
Because $A$ is Noetherian, it suffices to show that any finitely generated module in $\O^+$ has a submodule of the desired type.  Let $k$ be the smallest degree such that $M_k \neq 0$.  Since $R$ is Noetherian, as a right module, $M_k$ has an associated prime, which we write as $\sg^k(\p)$ for convenience.  This prime must be nonzero since $v M_k = \tp\tm M_k \subset \tp M_{k-1} = 0$ so $v \in \sg^k(\p)$.  Let $x \in M_k$ be an element such that $\ann(x) = \sg^k(\p)$.  Then $A \cdot x[k] \cong V^\p / J$ where $J \subset V^\p$ is a submodule such that $J_0 = 0$.  If $J = 0$ then $A \cdot x \cong Q^{\p,\infty} [-k]$ has the desired form.  Otherwise, let $l = \min \delta(J)$.  Observe that the maps $V^\p_i \to V^\p_{i+1}$ given by $x \mapsto \tp x$ are all isomorphisms.  Let $y \in V^\p_0$ be such that $\tp^l y \in J_l$.  Then the submodule of $V^\p / J$ generated by $y$ is isomorpic to $V^\p/V^{\p,l}$.  This corresponds to a submodule of $A \cdot x$ isomorphic to $Q^{\p,l}[-k]$.
\end{proof}

Suppose that $M$ is a finitely generated graded $A$ module belonging to $\O^+$.  Then $M$ has a finite filtration such that the associated graded is isomorphic to $\oplus_{j=1}^N{ (V^{\p_j}/V^{\p_j,k_j})[n_j]}$ where $\p_j \in \spec R$ and $k_j \in \chi'_\p$.  Therefore $\supp(M) = \bigcup_{j=1}^N{ \Sg^{n_j}(Z(\p_j))}$ and this is a closed subset of $\spec R$ contained in $Z^\sg$.  $Z^\sg$ could have many connected components and $M$ decomposes into summands supported on each of the connected components of $Z^\sg$.  We will introduce an assumption that makes sure $Z^\sg$ has many connected components that are easy to describe.  From now on, we assume that the following property ($\ast$) holds.
\begin{assumption}
 ($\ast$) For every connected component $W$ of $Z$ and every $n$, $W \cap \Sg^n(Z)$ is either empty or is a connected component of $Z$ and for $n \gg 0$, $W \cap \Sg^n(Z) = \empt$.
\end{assumption} 

\begin{example}
There are two examples of GWA data satisfying ($\ast$), inspired by geometric considerations.  First, let $R$ be a ring let $f_1,\dotsc,f_k \in R$ be any invertible elements.  Let $\sg$ be the automorphism of $R[h_1,\dotsc,h_k]$ be defined by $\sg(h_i) = h_i + f_i$.  Suppose that $s_1,\dotsc,s_k,r_1,\dotsc,r_n \in R$ be such that $\sum_{i=1}^k{f_i s_i}$ is a unit in $R$ and let $v(h_1,\dotsc,h_k) = \prod_{i=1}^n{ (\sum_{j=1}^k{s_j h_j} - r_i) }$.  Then the GWA data $(R[h_1,\dotsc,h_k], \sg, v(h_1,\dotsc,h_k))$ satisfies ($\ast$).  Geometrically, $R[h_1,\dotsc,h_k]$ is the total space of the trivial rank $k$ vector bundle on $\spec R$, $\bar{f} = (f_1,\dotsc,f_k)$ is a section, and $\sg$ is translation by this section.  To see $v$ geometrically, let $\phi$ be a vector bundle map from $\spec R \times \mathbb{A}^k \to \spec R \times \mathbb{A}^1$ that restricts to an isomorphism on the subbundle spanned by $(f_1,\dotsc,f_k)$.  Then $v$ is the pullback along $\phi$ of a function on $\spec R \times \mathbb{A}^1$ that does not vanish on any fiber.  This class of examples includes the classical case where $R = \C[h]$, $k=1$, and $f_1 = 1$ so that $\sg(h) = h+1$ and $v \in \C[h]$ is some polynomial. 
\end{example} 

\begin{example}
We can construct another class of examples using dilation.  Let $R_\bt = \bigoplus_{i \geq 0} R_i$ be a graded ring generated by $R_1$ over $R_0 = \C$.  For $\gamma \in \C$, not a root of unity, let $\sg$ be defined on homogeneous $x$ by $\sg(x) = \gamma^{\deg(x)}x$.  Then for $v = h - 1$ where $h \in R_1$, the GWA data $(R_\bt,\sg,v)$ will satisfy ($\ast$).  Of course, $\spec R_\bt$ embeds in $\spec \sym_\C R_1$ as a cone and the action of $\sg$ is induced by dilation by $\gamma$.  Note that in this case $\spec R/(v) \subset \spec R$ is an affine hyperplane section that corresponds to a dense affine open in $\proj R_\bt$.
\end{example}

We will now construct a system of projective generators for $\Op$.  Notice that if $M$ is in $\Op$ then any map $A \to M$ factors through a map $A/A\tm^n \to M$ because if $m$ is the image of $1$ then by assumption $\tm^n m = 0$ for $n \gg 0$.  Let $Z^\sg_+ = \bigcup_{n \geq 0}{\Sg^n(Z)} \subset Z^\sg$.

\begin{defn}For each $n > 0$ let $A(n) = A/A\tm^n$.  Recall that this is naturally an $(A,R)$ bimodule.  We calculate that
\begin{equation}\label{pieces}
 (A\tm^n)_i = 
\begin{cases}
	A_i & i \leq -n, \\
	\tp^i \cdot R \prod_{j=i}^{n-1} {\sg^{-j}(v)} & -n < i \leq 0, \\
	\tm^i \cdot R \prod_{i=0}^{n-1}{\sg^{-j}(v)} & i \geq 0.
\end{cases}
\end{equation}
Hence $\ann( A(n) ) = ( \prod_{j=0}^{n-1}{\sg^{-j}(v)} )$ and $\supp( A(n) ) = \bigcup_{j=0}^{n-1}{\Sg^n(Z)} \subset Z^\sg_+$.  By ($\ast$), the connected components of $\supp( A(n) )$ are $\Sg$ translates of the components of $Z$.  Let $\pi_0(Z)$ and $\pi_0(Z^\sg)$ denote the sets of connected components of $Z$ and $Z^\sg$ respectively.  For each connected component  $W \in \pi_0(Z^\sg)$ let $A(n)_W$ be the summand of $A(n)$ supported on $W$.  
\end{defn}

We now consider three maps.  First we have the natural quotient map $q: A(n+1)  \to A(n)$.  Second and third we have the maps $\phi^+:A(n) \to A(n+1)$ and $\phi^-:A(n+1) \to A(n)$ induced by the endomorphisms $\phi^+(x) = x\tp$ and $\phi^-(x) = x\tm$ of $A$.

The quotient restricts to a surjection $q_W:A(n+1)_W \onto A(n)_W$.  If $W \cap \Sg^n(Z) = \empt$ then $q_W$ is an isomorphism.  The kernel of $q_W$ is the summand of $A\tm^n/A\tm^{n+1}$ that is supported on $W$.  Using \eqref{pieces} we calculate that
\begin{equation}\label{Anidealcomputation}
(A\tm^n/A \tm^{n+1})_i \cong 
\begin{cases}
0 & i \leq -(n+1) \\
R/\left( (\sg^{-n}(v)) + \ann(\prod_{j=i}^{n-1}{\sg^{-j}(v)})\right) & -n \leq i \leq 0 \\
R/\left( (\sg^{-n}(v)) + \ann(\prod_{j=0}^{n-1}{\sg^{-j}(v)}) \right) & i \geq 0
\end{cases}
\end{equation}
and conclude that $\supp (A\tm^n/A\tm^{n+1}) \subset \Sg^n(Z)$.  Hence ($\ast$) implies that for $n \gg 0$, $q_W$ is an isomorphism.  Let $N \gg 0$ be so large that for all $n \geq N$, $W \cap \Sg^n(Z) = \empt$ and define $A_W = A(N)$.  The previous discussion justifies the notation since if we were to choose a different $N$ with the same property there would be a canonical isomorphism between the resulting $A_W$'s.

The maps $\phi^+,\phi^-$ induce arrows $\phi^+_W:A(n)_{\Sg(W)} \to A(n+1)_W[1]$ and $\phi^-_W: A(n+1)_W[1] \to A(n)_{\Sg(W)}$.  By construction we have $\phi^+_W \circ \phi^-_W(x) = x\sg(v) = x \cdot v$ and $\phi^-_W \circ \phi^+_W(x) = x \cdot v$.  If $Z \cap \Sg(W) = \empt$ then $v$ acts invertibly on $A(n)_{\Sg(W)}$ and $A(n+1)_W[1]$, since $\supp(A(n+1)_W[1]) = \Sg(W)$.

Let $W \in \pi_0(Z^\sg_+)\setminus \pi_0(Z)$ and consider $A_W$.  It follows from ($\ast$) that $W = \Sg^k(W_0)$ for some $W_0 \in \pi_0(Z)$ and $k > 0$.  We may assume that $\Sg^n(W_0) \cap Z = \empt$ for $0 < n \leq k$.  Let $N \gg 0$ be so large that $A_{W_0} = A(N)_{W_0}$ and $A_{W} = A(N+k)_W$.  Then $\phi^+$ induces a string of isomorphisms
\[ 
A_W = A(N)_{\Sg^k(W_0)} \iso A(N+1)_{\Sg^{k-1}(W_0)}[1] \iso \dotsb \iso A(N+k)_{W_0}[k] = A_{W_0}[k]
\]
We conclude that the collection of $A_W$, $W \in \pi_0(Z^\sg)$ can be obtained as shifts of a finite set of modules parameterized by $\pi_0(Z)$.  
\begin{defn}[Projective generators]
For each $W \in \pi_0(Z)$ let $P_W = A_W$.  
\end{defn}
Observe that, by construction, $(P_W)_0$ is an indecomposable right $R$ module.  Because $P_W$ is generated in degree zero, this implies that it is an indecomposable graded $A$ module.

\begin{prop} \label{pgen}
The set $\{P_W[n]\}$ where $W\in \pi_0(Z)$ forms a system of projective generators for $\O^+$.  The set $\{P_W\}$ where $W \in \pi_0(Z)$ also forms a system of projective generators for $\O$.
\end{prop}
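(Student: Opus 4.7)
The plan is to verify in turn that $P_W$ lies in $\Op$, that it is projective there, and that the family $\{P_W[n]\}$ generates. Membership is immediate from \eqref{pieces}: since $A(n)_i = 0$ for $i \leq -n$, any homogeneous element of $A(n)$ is annihilated by a high enough power of $\tm$, so $A(n) \in \Op$, and $P_W = A(N)_W$ inherits the property.

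For projectivity, suppose $\pi: M \onto N$ is a surjection in $\Op$ and $f: P_W \to N$ is arbitrary. I would form the composition $\bar f: A \onto A(N) \onto A(N)_W = P_W \xrightarrow{f} N$, use freeness of $A$ to lift $\bar f$ to some $\tilde f: A \to M$, and then exploit local nilpotency of $M$ to enlarge until $\tm^m \tilde f(1) = 0$ with $m \geq N$ large enough that ($\ast$) ensures $W \cap \Sg^j(Z) = \empt$ for all $j \geq m$. By the analysis preceding the proposition, the canonical surjection $A(m)_W \onto A(N)_W = P_W$ is then an isomorphism. The orthogonal idempotents in $R/\ann(A(m))$ split $A(m) = \bigoplus_{W'} A(m)_{W'}$, allowing me to restrict $\tilde f$ to the $W$-summand and compose with the inverse of the stabilized quotient to obtain the desired lift $P_W \to M$. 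The critical point is that $\bar f$ is already supported on the $W$-summand, so the contributions of the other $A(m)_{W'}$'s to $\tilde f$ necessarily land in $\ker \pi$ and may be discarded.

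For generation, given $M \in \Op$ and a homogeneous $m \in M_d$, local nilpotency makes the map $A \to M$, $1 \mapsto m$, factor through $A(n)[d]$ for some $n$. The idempotent decomposition $A(n) = \bigoplus_{W'} A(n)_{W'}$, together with the analysis preceding the proposition, exhibits each summand as a shift $P_{W_0}[k]$ with $W_0 \in \pi_0(Z)$, so $m$ lies in the image of a direct sum of $P_{W_0}[n']$'s, and $M$ is a quotient of such a direct sum. For the ungraded statement, the same argument applies after forgetting the grading: the idempotent decomposition of $A(n)$ uses elements of $R$ and so is grading-independent, while every $P_W[n]$ becomes isomorphic to $P_W$ in $\O$. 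The main obstacle is the projectivity step, whose success hinges on both the stabilization $A(m)_W \cong P_W$ afforded by ($\ast$) and the cleanness of the idempotent decomposition.
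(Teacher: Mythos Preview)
Your argument is correct and matches the paper's approach: lift through the free module $A$, factor through $A(m)$ via local nilpotency, and then use the stabilization $A(m)_W \cong P_W$ furnished by ($\ast$) to extract the desired lift on the $W$-summand. One small slip in the generation step: in general $A(n)_{W'}$ is only a \emph{quotient} of some $P_{W_0}[k]$ rather than equal to it (unless $n$ is already large enough relative to $W'$), which is how the paper phrases it---but since quotients suffice for generation, this does not affect the conclusion.
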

\begin{proof}
Let $M \onto M''$ be a surjection of modules in $\O^+$ and let $P_W[n] \to M''$.  We will show that there is a lift $P_W[n] \to M$.  It will be convenient to replace this problem with the equivalent problem obtained by applying $[-n]$ to the maps and modules.  Now, write $P_W = A(N)_W$ for some $N \gg 0$.  The composite $A \to A(N) \to A(N)_W \to M''$ lifts to a map $f:A \to M$.  If $-m < \min(\delta(M))$, the map $f$ factors as $A \to A(m) \to M$.  We can assume that $m > N$.  There is a commutative square
\[
\xymatrix{
A(m)  \ar[r]^q \ar[d] & A(N)_W \ar[d] \\
M \ar[r] & M' \\
}
\]
Note that $q:A(m)_W \to A(N)_W$ is an isomorphism.  Resticting the map $A(m) \to M$ to a map $A(m)_W \to M$ and composing with $q$ gives the desired lift $P_W \to M$.

As we noticed in the previous paragraph, every map $A \to M$ with $M$ an object of $\O^+$ factors through a map $A(n) \to M$ for some $n$.  Hence, the collection $A(n)[m]$ is a system of generators for $\O^+$.  But $A(n)$ is a finite direct sum of $A(n)_{W'}$ where $W'$ ranges over $\pi_0(Z^\sg)$.  Each of the $A(n)_{W'}$ is a quotient of $P_W$ for some $W \in \pi^0(Z)$ such that $\Sg^k(W) = W'$.  Therefore the $P_W[m]$ also form a generating set.

Forgetting the gradings, the same argument shows that $P_W$ are projective in $\O$ and generate since the $A(n)$ do.
\end{proof}

\begin{example}  
Let us see what happens when ($\ast$) is not satisfied.  We take $R = \C[x,y]$, $v = x$, and $\sg$ an irrational rotation of the plane, i.e. a rotation of infinite order.  Consider the corresponding GWA.  We will see that $\O^+$ does not have enough projectives.  Consider the surjections $A(n+m) \onto A(n)$, suppose that $P$ is a projective in $\O^+$ with a map $P \to A(n)$.  For every $m$ there is a lift $P \to A(n+m)$ that completes the commutative diagram
\[
\xymatrix{
 & P \ar[d]\ar[dl] \\
A(n+m) \ar[r] & A(n) \\
}
\]
Fix a surjection $\oplus_i{ A(n_i)[k_i] } \onto P$ where $i$ may range over an arbitrary index set.  Let us calculate the possibilities for the image of a map $A(n_i)[k_i] \to A(n)$ that factors through the quotient $A(n+m) \to A(n)$.  Since $A(n_i)[k_i]$ is cyclic, $\Homgr(A(n_i)[k_i], A(n+m))$ can be identified with the set of elements $x \in A(n+m)_{-k_i}$ such that $\tm^{n_i} x = 0$.  Let $\la_0 = (x,y)$ and for $X \subset \Z$ a nonempty finite subset let $p_X = \prod_{j \in X}{ \sg^j(v) }$ and $q_X = \prod_{j \in X}{\sg^{-j}(v)}$.  For $l_1 < l_2$ integers write $[l_1,l_2] = \{l_1,l_1+1,\dotsc,l_2-1,l_2\}$.  Notice that $p_X,q_X \in \la_0$ for any $X$.  Assume that $n+m > \max\{n_i + k_i, -k_i - n_i\}+1$.  First, suppose $k_i > 0$.  Then $x = \tm^{k_i} r$ and $\tm^{n_i} x = \tm^{n_i+k_i} r =0$ if and only if $r \in (q_{[n_i+k_i,n+m]})$.    Then the image under composition with the quotient $A(n+m) \onto A(n)$ is the submodule generated by $\tm^{k_i} r$ which is contained in the submodule generated on the right by $q_{[n_i+k_i,n+m]}$.  We conclude that in this case the image is contained in $A(n)\la_0$.  On the other hand, if $k_i < 0$ then $x = \tp^{-k_i} r$ and 
\[ 
\tm^{n_i} \tp^{-k_i} r = 
\begin{cases}
\tp^{-k_i - n_i} p_{[-k_i - n_i,n + m]} r & -k_i \geq n_i, \\
\tm^{n_i + k_i} p_{[n_i + k_i,n + m]} r & n_i + k_i \geq 0.
\end{cases}
\]
These are zero in $A(m+n)$ if and only if $p_{[-k_i - n_i,n+m]}r \in (q_{[0,n+m-1]})$ or $p_{[n_i+k_i,n+m]}r \in (q_{[n_i+k_i,n+m]})$ respectively.  Both situations imply that $r \in (q_{[1,n+m-1]})$ and therefore that the image $A(n_i)[k_i] \to A(n)$ is contained in $A(n)\la_0$.  We conclude that for each $i$ there is an $m$ such that if $A(n_i)[k_i] \to A(n)$ factors through $A(n+m) \to A(n)$ then the image of the map is contained in $A(n)\la_0$.  By assumption, for each $i$, the map $A(n_i)[k_i] \to P \to A(n)$ factors through $A(n+m)$ for every $m$ and thus the image of this map is contained in $A(n)\la_0$.  Therefore the image of $P$ in $A(n)$ is contained in $A(n)\la_0$ and we conclude that there is not a system of projective generators for $\O^+$.
\end{example}

\begin{prop} \label{phom}
If $W_1,W_2 \in \pi_0(Z)$ then $\Homgr(P_{W_1},P_{W_2}[n]) = 0$ unless $W_1 = \Sg^n(W_2)$.
\end{prop}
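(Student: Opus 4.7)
The plan is to exploit the right $R$-module structure on graded $A$-modules: as observed earlier in the paper, every graded $A$-module is naturally an $(A,R)$-bimodule, and every degree-preserving $A$-module map is automatically a right $R$-module map. Consequently, any $\phi \in \Homgr(P_{W_1}, P_{W_2}[n])$ is right $R$-linear, and $\im\phi$ is supported on $\supp(P_{W_1}) \cap \supp(P_{W_2}[n])$ inside $\spec R$.

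First I would pin down these two supports. By construction $P_W = A(N)_W$ for $N$ large enough that $W \cap \Sg^m(Z) = \empt$ for all $m \geq N$, and by definition $A(N)_W$ is the summand of $A(N)$ supported on the component $W$ of $Z^\sg_+$; hence $\supp(P_{W_1}) = W_1$. Since shifting twists the right action by $\sg$, the identity $\supp(M[n]) = \Sg^n(\supp(M))$ from \S 2 gives $\supp(P_{W_2}[n]) = \Sg^n(W_2)$.

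Now I would argue by contradiction: assume $\phi \neq 0$, so $\im\phi$ is a nonzero right $R$-module and in particular has nonempty support, forcing $W_1 \cap \Sg^n(W_2) \neq \empt$. The remaining task is to deduce from this and ($\ast$) that $W_1 = \Sg^n(W_2)$. Applied to the component $W_1$ of $Z$, ($\ast$) says $W_1 \cap \Sg^n(Z)$ is either empty or a connected component of $Z$; since it contains a nonempty subset yet is contained in the connected set $W_1$, it must equal $W_1$, so $W_1 \subseteq \Sg^n(Z)$. Because $W_1$ is connected and $\Sg^n$ is a homeomorphism of $\spec R$, $W_1$ lies in a single connected component $\Sg^n(W_3)$ of $\Sg^n(Z)$ with $W_3 \in \pi_0(Z)$. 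But then $\Sg^n(W_2) \cap \Sg^n(W_3) \supseteq W_1 \cap \Sg^n(W_2) \neq \empt$, so applying $\Sg^{-n}$ gives $W_2 \cap W_3 \neq \empt$ and therefore $W_2 = W_3$; hence $W_1 \subseteq \Sg^n(W_2)$. A symmetric application of ($\ast$) to $W_2$ with shift $-n$ yields $\Sg^n(W_2) \subseteq W_1$, completing the equality.

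The main obstacle is actually fairly mild: all the necessary machinery has been assembled, and the only thing to handle with care is that ($\ast$) forces components of $Z$ and their $\Sg^n$-translates to behave rigidly, meeting either trivially or identically, rather than in some partial overlap. Once that rigidity is secured, the support comparison between $P_{W_1}$ and $P_{W_2}[n]$ rules out every case except $W_1 = \Sg^n(W_2)$.
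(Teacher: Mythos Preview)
Your proof is correct and follows essentially the same approach as the paper: compute $\supp(P_{W_1}) = W_1$ and $\supp(P_{W_2}[n]) = \Sg^n(W_2)$, then use ($\ast$) to see that these either coincide or are disjoint, so that disjoint supports force $\Homgr = 0$. The only difference is that the paper simply asserts the dichotomy ``either $W_1 = \Sg^n(W_2)$ or $W_1 \cap \Sg^n(W_2) = \empt$'' as an immediate consequence of ($\ast$), whereas you carefully unpack this implication; your extra care here is fine and does not change the strategy.
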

\begin{proof}
By construction $\supp(P_{W_1}) = W_1$ and $\supp( P_{W_2}[n] ) = \Sg^n(W_2)$.  By ($\ast$), either $W_1 = \Sg^n(W_2)$ or else $W_1 \cap \Sg^n(W_2) = \empt$.  Clearly if $W_1 \cap \Sg^n(W_2) = \empt$ then $\Homgr( P_{W_1},P_{W_2}[n] ) = 0$. 
\end{proof}

Note that $\Hom_A(P_{W_1},P_{W_2}) = \bigoplus_{n \in \Z} \Homgr(P_{W_1},P_{W_2}[n])$.  By Proposition \ref{phom}, if more than one of these spaces is nonzero then $W_1 = \Sg^{n_1}(W_2) = \Sg^{n_2}(W_2)$ and thus $W_2 = \Sg^{n_1-n_2}(W_2)$.  But this implies that $\Sg^{m(n_1 - n_2)}(W_2) \cap Z \neq \empt$ for all $m$, contradicting ($\ast$).  Therefore every map in $\Hom_A(P_{W_1},P_{W_2})$ is automatically homogeneous of some particular degree.

\begin{defn}
Let $\pi_0(Z)/\Sg$ be the set of equivalence classes in $\pi_0(Z)$ for the equivalence relation $W_1 \sim W_2$ if there is an $n$ such that $W_2 = \Sg^n(W_1)$.  For each $w \in \pi_0(Z)/\Sg$, fix $W_w \in w$ and set $\chi_w = \{n \in \Z : \Sg^n(W_w) \in \pi_0(Z)\}$.  For each $n \in \chi_w$ set $P_{w,n} = P_{\Sg^{-n}(W_w)}[n]$.  Finally, for $w \in \pi_0(Z)/\Sg$ let $\Op_w$ be the thick subcategory generated by the projective modules $P_{w,n}$, $n \in \chi_w$.  Define $\Op_Z$ to be the thick subcategory of $\Op$ generated by all of the $\Op_w$.  Note that the various $\Op_w$ are \emph{not} closed under shifting.  
\end{defn}

\begin{defn} If $\mc{A}$ is an abelian category then $\oplus_\Z \mc{A}[n]$ is the category whose objects are formal sums of formal shifts $\oplus_i{a_i[n_i]}$ for $a$ and object of $\mc{A}$ and $n_i \in \Z$ and where 
\[
\Hom(\bigoplus_i{ a_i[n_i]},\bigoplus_j{b_j[m_j]}) = \prod_i \bigoplus_j \Hom(a_i[n_i],b_j[m_j]),
\]
and
\[ 
\Hom(a[n],b[m]) = 
\begin{cases}
	\Hom_\mc{A}(a,b) 	& n=m \\
	0 			&	\text{otherwise}
\end{cases}.
\]
\end{defn}

\begin{thm} \label{gr}
The functor which forgets the grading defines an equivalence between $\Op_Z$ and $\O$.  Moreover, $\Op \cong  \bigoplus_{w \in \pi_0(Z)/\Sg} \bigoplus_{n \in \Z} \Op_w[n]$.
\end{thm}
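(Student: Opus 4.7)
The plan is to handle both assertions by computing Hom spaces between the distinguished graded projective generators $P_{w,n}$, using Proposition \ref{phom} together with condition $(\ast)$.

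For the equivalence $F\colon \Op_Z \to \O$ given by forgetting the grading: the set $\{P_{w,n} : w\in\pi_0(Z)/\Sg,\ n\in\chi_w\}$ of generators of $\Op_Z$ is sent by $F$ bijectively onto the generating set $\{P_W : W\in\pi_0(Z)\}$ of $\O$ from Proposition \ref{pgen}, since each $W\in\pi_0(Z)$ equals $\Sg^{-n}(W_w)$ for a unique pair $(w,n)$ with $n\in\chi_w$. To see $F$ is fully faithful on these generators, I use the identification
\[
\Hom_A(P_{W_1},P_{W_2}) = \bigoplus_{m\in\Z}\Homgr(P_{W_1},P_{W_2}[m])
\]
recorded just before the definition of $\pi_0(Z)/\Sg$ and apply Proposition \ref{phom}: only those $m$ with $W_1 = \Sg^m(W_2)$ contribute, and by the argument in that same paragraph $(\ast)$ forbids more than one such $m$ from existing. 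A direct check of shifts identifies the surviving graded Hom with $\Homgr(P_{w_1,n_1}, P_{w_2,n_2})$.

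For essential surjectivity, any $M\in\O$ admits a presentation by sums of $P_W$'s via Proposition \ref{pgen}; full-faithfulness on generators lifts this to a presentation by $P_{w_i,n_i}$'s in $\Op_Z$, whose cokernel is a graded object that forgets to $M$. Full-faithfulness then propagates formally from generators to all of $\Op_Z$: for any $M,N\in\Op_Z$, write $M$ as a cokernel of a map between direct sums of generators, apply $\Hom(-,N)$ to convert $\Hom(M,N)$ into a kernel of maps out of projective generators, and observe that $F$ preserves these kernels since it is exact and agrees with the graded functor on those Hom-groups already.

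For the block decomposition of $\Op$: every projective generator $P_W[m]$ of $\Op$ rewrites uniquely as $P_{w,k}[l]$ with $W=\Sg^{-k}(W_w)$, $k\in\chi_w$, and $l=m-k$. The same Hom computation yields
\[
\Homgr(P_{w_1,k_1}[l_1], P_{w_2,k_2}[l_2]) = 0 \quad\text{unless } w_1=w_2 \text{ and } l_1=l_2,
\]
since Proposition \ref{phom} reduces non-vanishing to $W_{w_1}=\Sg^{l_2-l_1}(W_{w_2})$, which first forces $w_1=w_2=w$, and then $(\ast)$ excludes $l_1\ne l_2$ (otherwise $W_w$ would be fixed by a nonzero power of $\Sg$, contradicting $W\cap\Sg^n(Z)=\empt$ for $n\gg 0$). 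The block indexed by $(w,l)$ is therefore generated by $\{P_{w,k}[l]:k\in\chi_w\}=\Op_w[l]$, yielding $\Op\cong\bigoplus_{w\in\pi_0(Z)/\Sg}\bigoplus_{n\in\Z}\Op_w[n]$. The crux in both statements is the same $(\ast)$-driven Hom-vanishing computation, essentially already distilled in the discussion after Proposition \ref{phom}; once it is in hand, the rest of the argument is formal manipulation of presentations.
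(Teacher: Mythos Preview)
Your proposal is correct and follows essentially the same approach as the paper: both arguments reduce to showing that $\Hom_A(P_{w_1,n_1},P_{w_2,n_2}) = \Homgr(P_{w_1,n_1},P_{w_2,n_2})$ via Proposition~\ref{phom} and $(\ast)$, and then verifying the block-decomposition Hom-vanishing $\Homgr(P_{w,n},P_{w',n'}[m])=0$ unless $w=w'$ and $m=0$ by the same two ingredients. You spell out the formal propagation of full faithfulness and essential surjectivity from generators to the whole category more explicitly than the paper does; one small presentational wrinkle is that your propagation step invokes $\Homgr(P_i,N)\cong\Hom_A(F(P_i),F(N))$ for arbitrary $N$ before having established it, so strictly speaking you should first extend from $\Hom(P,P')$ to $\Hom(P,N)$ (using a presentation of $N$ and projectivity of $P$) and only then to $\Hom(M,N)$.
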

\begin{proof}
The forgetful functor $\Op_Z \to \O$ is exact and therefore fully faithful because we constructed the $P_{w,n}$ so that $\Hom(P_{w_1,n_1},P_{w_2,n_2}) = \Homgr(P_{w_1,n_1},P_{w_2,n_2})$.  By Proposition \ref{pgen}, the essential image contains projective generators and therefore the forgetful functor is an equivalence.  

To check the second assertion it suffices to show that $\Homgr(P_{w,n},P_{w',n'}[m]) = 0$ unless $w = w'$ and $m = 0$.  If $w \neq w'$ then $\supp(P_{w,n}) \cap \supp(P_{w',n'}[m]) = \empt$ and therefore $\Homgr(P_{w,n},P_{w',n'}[m]) =0$.  Assume that $w = w'$.  By definition $P_{w,n} = P_{\Sg^{-n}(W_w)}[n]$ and $P_{w,n'}[m] = P_{\Sg^{-n'}(W_w)}[n'+m]$.  Now, Proposition \ref{phom} implies that $\Homgr(P_{\Sg^{-n}(W_w)}[n],P_{\Sg^{-n'}(W_w)}[n'+m]) = \Homgr(P_{\Sg^{-n}(W_w)}, P_{\Sg^{-n'}(W_w)}[n'-n+m]) = 0$ unless $W_w = \Sg^m(W_w)$.  By ($\ast$), if $W_w = \Sg^m(W_w)$ then $m=0$.
\end{proof}

We cannot make this decomposition into thick subcategories finer.  Indeed suppose that $W_1 = \Sg^n(W_2)$ and that $n > 0$.  Then the map $P_{W_1} \to P_{W_2}[n]$ induced by right multiplication by $\tp^n$ on $A$ is nonzero.   However, in general it will neither be injective or surjective.

\begin{cor}
Set $P = \bigoplus_{W \in \pi_0(Z)}{P_W}$.  $P$ is a projective generator of $\O$.  Hence, $\O$ is equivalent to the category of right modules over the finite $R/(v)$ algebra $\End_A(P)$.
\end{cor}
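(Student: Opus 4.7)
The plan is to assemble this corollary from Proposition \ref{pgen} together with standard Morita theory, with a small side computation to pin down the ring $\End_A(P)$.

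First I would establish that $P$ is a projective generator of $\O$. Since $\pi_0(Z)$ is finite (as $Z = \spec R/(v)$ is a Noetherian topological space), $P$ is a finite direct sum of the modules $P_W$, which are projective by Proposition \ref{pgen}, so $P$ is projective. Likewise, Proposition \ref{pgen} states that the family $\{P_W : W \in \pi_0(Z)\}$ generates $\O$, and their direct sum $P$ then generates $\O$ as well (any object of $\O$ is covered by a direct sum of copies of the $P_W$, hence by a direct sum of copies of $P$).

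Next I would apply the classical Morita theorem for projective generators: for any abelian category $\mathcal{D}$ with arbitrary coproducts and a projective generator $P$, the functor $\Hom_{\mathcal{D}}(P,-)$ is an equivalence from $\mathcal{D}$ to the category of right modules over $\End_{\mathcal{D}}(P)$. Applied to $\mathcal{D} = \O$, this gives the desired equivalence $\O \simeq \mathrm{Mod}\text{-}\End_A(P)$.

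It remains to check that $\End_A(P)$ is a finite $R/(v)$-algebra. The right $R$-action on $P$ coming from the $(A,R)$-bimodule structure makes $\End_A(P)$ into an $R$-algebra, and since $v$ annihilates each $A(n)$ (because $\ann(A(n)) = (\prod_{j=0}^{n-1}\sg^{-j}(v))$ contains $v$) it annihilates each summand $P_W = A(N)_W$ and therefore acts as zero on $\End_A(P)$; the $R$-algebra structure factors through $R/(v)$. For finiteness over $R/(v)$, decompose
\[
\End_A(P) = \bigoplus_{W_1,W_2 \in \pi_0(Z)} \Hom_A(P_{W_1},P_{W_2}),
\]
a finite direct sum by finiteness of $\pi_0(Z)$. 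For each pair $(W_1,W_2)$, the discussion following Proposition \ref{phom} shows, using ($\ast$), that $\Hom_A(P_{W_1},P_{W_2}) = \Homgr(P_{W_1},P_{W_2}[n])$ for at most one integer $n$ (or is zero). Since $P_{W_1}$ is generated in degree $0$, any degree-preserving map $P_{W_1} \to P_{W_2}[n]$ is determined by the image of the degree $0$ generator, so $\Homgr(P_{W_1},P_{W_2}[n])$ embeds as an $R$-submodule of $(P_{W_2})_n$; and $(P_{W_2})_n = (A(N)_{W_2})_n$ is a finitely generated $R$-module by the explicit formula \eqref{pieces}. Noetherianness of $R$ then gives finite generation of each summand, and hence of $\End_A(P)$ over $R/(v)$.

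There is no real obstacle — the main content has already been done in Proposition \ref{pgen} and in the discussion around Proposition \ref{phom}. The only careful point is invoking ($\ast$) to cut the $\Z$-graded Hom between distinct $P_W$'s down to a single graded degree, so that the endomorphism ring does not acquire infinitely many graded pieces and destroy finite generation over $R/(v)$.
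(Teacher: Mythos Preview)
Your argument follows essentially the same route as the paper's: invoke Proposition \ref{pgen} for the projective-generator claim, appeal to Morita theory for the equivalence, and then verify finiteness of $\End_A(P)$ by breaking it into finitely many graded pieces $\Homgr(P_{W_1},P_{W_2}[n])$ (using the discussion after Proposition \ref{phom}) and bounding each by a cyclic $R$-module. The paper bounds each summand by $\Hom^{\mathrm{left}}_R\bigl((P_{w,n})_0,(P_{w',n'})_0\bigr)$, while you bound it by $(P_{W_2})_n$; these are equivalent observations.

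There is one slip. Your justification that $v$ kills $P$ is backwards: since $v$ is a \emph{factor} of $\prod_{j=0}^{n-1}\sg^{-j}(v)$, the ideal $\ann(A(n)) = \bigl(\prod_{j=0}^{n-1}\sg^{-j}(v)\bigr)$ is contained in $(v)$, not the other way around, so $v$ need not annihilate $A(n)$ or $P_W$. (Concretely, for $R=\C[h]$, $v=(h-a)(h-a-1)$ one finds $(P_{\{a+1\}})_0 \cong \C[h]/(h-a-1)^2$, on which $v$ acts nontrivially.) This does not affect your finiteness argument, which only needs that each $(P_{W_2})_n$ is a finitely generated $R$-module; but it does mean the passage from ``finite $R$-algebra'' to ``finite $R/(v)$-algebra'' requires a different justification --- e.g.\ that $\supp(P_W)=W\subset Z$, so some power of $v$ annihilates each $P_W$ and hence $\End_A(P)$ is at least supported on $Z$. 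The paper's own proof is terse on exactly this point as well.
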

\begin{proof}
We just need to check that $\End_A(P)$ is finite.  However it follows from the preceding paragraphs that
\[
\End_A(P) = \bigoplus_{\substack{w,w' \in \pi_0(Z)/\Sg \\ n \in \chi_w, n' \in \chi_{w'}}}{\Homgr(P_{w,n},P_{w',n'})}.
\]
This is a finite direct sum and $\Homgr(P_{w,n},P_{w',n'}) \subset \Hom^{left}_R( (P_{w,n})_0, (P_{w',n'})_0)$, where $\Hom^{left}_R$ is the set of homomorphisms of \emph{left $R$-modules}.  Since $R$ is noetherian and $(P_{w,n})_0, (P_{w',n'})_0$ are cyclic left $R/(v)$ modules it follows that $\End_A(P)$ is a finitely generated module over $R/(v)$.  
\end{proof}

Observe that there is a map $\pi_0(Z)/\Sg \times \Z \to \pi_0(Z^\sg)$ defined by $(w,n) \mapsto \Sg^n(W_w)$.  By ($\ast$), this is a bijection.  So we can also think of this above decomposition as parameterized by $\pi_0(Z^\sg)$.  Let $W \in \pi_0(Z^\sg)$.  We know that $A_W \cong P_{W'}[k]$ and by ($\ast$), there is a unique $n$ such that $W' = \Sg^n(W)$.  Therefore $P_{W'} = P_{w,n}[-n]$ and it follows that $A_W \cong P_{w,n}[k-n]$ belongs to $\Op_w[k-n]$.  So unfortunately, the parameterization of the decomposition of $\Op$ disagrees with our parameterization of the fundamental modules $A_W, W \in \pi_0(Z^\sg)$.  Finally, we note that if ($\ast$) holds then Theorem \ref{gr} implies that Proposition \ref{filt} applies to $\O$ as well as $\O^+$.

It follows from \ref{simp} that ($\ast$) implies that for each $\la \in \maxspec R$, if $\chi_\la \neq \empt$ then it is finite so $A^\la$ is Artinian.  We introduce uniform notation by setting 
\[
 A^{\la,-} =
 \begin{cases}
 	A^\la & \chi_\la = \empt \\
 	A^{\la,\min \chi_\la} & \chi_\la \neq \empt, \min \chi_\la \leq 0 \\
 	A^\la/A^{\la,\min \chi_\la} & \chi_\la \neq \empt, \min \chi_\la > 0
 \end{cases}.
\]
Note that $A^{\la,-}$ is simple and for $n \ll 0$, $n \in \delta(A^{\la,-})$.  The following Lemma will be used in the next section.

\begin{lemma} \label{crito}
The modules in $\O^+$ are exactly those graded $A$ modules not having any $A^{\la,-}$ as a subquotient.
\end{lemma}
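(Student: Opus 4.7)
The forward implication is straightforward: the class $\Op$ is closed under subquotients, since local $\tm$-nilpotence passes to submodules and quotients. On the other hand each $A^{\la,-}$ has nonzero components in all sufficiently negative degrees with $\tm$ acting as an isomorphism between consecutive components, so no nonzero element is killed by any power of $\tm$ and thus $A^{\la,-} \notin \Op$. Consequently, any module in $\Op$ has no $A^{\la,-}$ as a subquotient.

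For the converse, assume $M \notin \Op$ and pick a homogeneous $m \in M$, which after shifting I take to lie in degree $0$, such that $\tm^n m \neq 0$ for every $n \geq 0$. Set $N := Am \subset M$ and $I_n := \ann_R(\tm^n m)$. The relation $\tm r = \sg^{-1}(r)\tm$ yields $\sg^{-1}(I_n) \subset I_{n+1}$, so the chain of proper ideals $\{\sg^n(I_n)\}_{n \geq 0}$ in $R$ is ascending; by noetherianness it stabilizes at some proper ideal $I$, and I fix a maximal ideal $\la \supset I$.

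The plan is to extract $A^{\la,-}$ as a subquotient of $N/A\la\cdot m$. The surjection $A \onto N/A\la\cdot m$ factors through $A^\la$ and produces a surjection $A^\la \onto N/A\la\cdot m$ with some kernel $K$. Identifying $A_{-n}$ with $R$ as a left $R$-module in the usual way, I compute $K_{-n} \cong (\sg^{-n}(\la) + I_n)/\sg^{-n}(\la)$, and this vanishes for every $n \geq 0$ because $\sg^n(I_n) \subset I \subset \la$. Hence $K$ has no components in degrees $\leq 0$, so Lemma~\ref{simp}(i) forces $K = 0$ or $K = A^{\la,k'}$ for some $k' \in \chi_\la$ with $k' > 0$. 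It then remains to check that $A^{\la,-}$ is a subquotient of $A^\la/K$ in each case of its definition: if $\chi_\la = \empt$ then $K = 0$ and $A^\la/K = A^\la = A^{\la,-}$; if $\min\chi_\la \leq 0$ then $A^{\la,-} = A^{\la,\min\chi_\la}$ lies in strictly negative degrees and embeds into $A^\la/K$; if $\min\chi_\la > 0$ then $K \subset A^{\la,\min\chi_\la}$, so $A^{\la,-} = A^\la/A^{\la,\min\chi_\la}$ is a quotient of $A^\la/K$. The crux of the argument, and the main obstacle, is the choice of $\la$: a naive enlargement of $\ann_R(m)$ to a maximal ideal need not suppress the negative-degree part of $K$, and passing to the stable limit of $\sg^n(I_n)$ is precisely what ensures $K$ lies in positive degrees.
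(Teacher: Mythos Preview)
Your argument follows the paper's route: isolate a cyclic piece on which $\tm$ acts without nilpotence, use noetherianity of $R$ to stabilise the tower of annihilator ideals in negative degrees, choose a maximal ideal $\la$ above the limit, pass to the corresponding quotient of $A^\la$, and read off $A^{\la,-}$ from the submodule structure of $A^\la$. The paper phrases the stabilisation via the right $R$-module structure on the kernel $J$ and then tensors with $R/\la$, while you track the left annihilators $I_n$ and quotient by $A\la\cdot m$; these are the same construction, and your case analysis at the end is a slightly more explicit version of the paper's concluding sentence.

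There is, however, a systematic sign slip. With the paper's conventions one has $\sg(r)\tm=\tm r$, i.e.\ $\tm r=\sg(r)\tm$ (equivalently $r\tm=\tm\sg^{-1}(r)$), not $\tm r=\sg^{-1}(r)\tm$. Multiplying $r\tm^n m=0$ on the left by $\tm$ therefore gives $\sg(r)\tm^{n+1}m=0$, so the correct inclusion is $\sg(I_n)\subset I_{n+1}$ and the ascending chain is $\{\sg^{-n}(I_n)\}_{n\ge 0}$. Likewise, under the left-$R$ identification $r\mapsto r\tm^n$ of $A_{-n}$ with $R$ one has $(A\la)_{-n}\leftrightarrow \sg^n(\la)$, whence $K_{-n}\cong (I_n+\sg^n(\la))/\sg^n(\la)$. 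Your two sign errors cancel and the key conclusion $K_{-n}=0$ for all $n\ge 0$ survives, but the intermediate formulas as written are incorrect and should be fixed. One further small point: when you shift $m$ into degree $0$ you should note, as the paper does, that the family $\{A^{\la,-}:\la\in\maxspec R\}$ is closed under shifts, so an $A^{\la,-}$ found in the shifted module yields some $A^{\mu,-}$ as a subquotient of $M$ itself.
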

\begin{proof}
First, note that $\O^+$ is closed under taking subquotients.  Therefore if $M$ belongs to $\Op$ then all simple subquotients belong to $\Op$.  A simple module belongs to $\Op$ if and only if it is not of the form $A^{\la,-}$ for any $\la \in \maxspec R$.

Now, if $M$ does not belong to $\Op$ then there is some homogeneous $m$ such that $Am$ does not belong to $\Op$.  Of course, $A m \cong A/J[n]$ and $A/J$ is not in $\O^+$ if and only if $A/J[n]$ is not in $\Op$.  Since the set of modules of the form $A^{\la,-}$ is closed under the shift, $A^{\la,-} \cong A^{\sg(\la),-}[1]$, it suffices to show that if $A/J$ is not in $\Op$ then $A/J$ has a subquotient of the form $A^{\la,-}$.  As remarked before $J$ is automatically a right $R$ submodule of $A$.  Because $R$ is noetherian, for $n \gg 0$ we have $\tm J_n = J_{n-1}$.  Hence there is an ideal $\bar{J}$ such that for $n \ll 0$, $(A/J)_n \cong R/\bar{J}$ as a right $R$ module.  Since $(A/J)_n \neq 0$ for $n \ll 0$, there is a $\la \in \maxspec R$ such that $\bar{J} \subset \la$.  Therefore $(A/J \tensor_R R/\la)_n \neq 0$ for $n \gg 0$.  But since $M' = A/J \tensor_R R/\la$ is a quotient of $A_\la$ such that $\delta(M')$ is not bounded below, it must contain $A^{\la,-}$ either as a submodule or as a quotient.
\end{proof}

\section{Graded Morita equivalence}

For a graded ring $A$ let $A-\grMod$ be the category of graded left $A$ modules.  This category is equipped with an auto-equivalence $(-)[1]$, the usual shift.  
\begin{defn}
Let $A$ and $B$ be graded rings. A \emph{strongly graded Morita equivalence} is a $\C$-linear equivalence of categories $F:B-\grMod \iso A-\grMod$ such that for any graded $B$ module $M$, $F(M[1])\cong F(M)[1]$.  
\end{defn}
Since $B$ is projective in $B-\grMod$, $P=F(B)$ is projective in $A-\grMod$.  The monomorphisms in $A-\grMod$ are exactly the injective module maps.  Therefore we can detect whether or not an object satisfies the ascending chain condition using only the abstract structure of the category $A-\grMod$.  Suppose that $B$ is noetherian.  Then the graded $B$ module $B$ satisfies the ascending chain condition.  Therefore $P$ also satisfies the ascending chain condition and we conclude that $P$ is finitely generated.  Note that $\Homgr(B,B[n]) = B_n$ and composition $\Homgr(B,B[n])\tensor \Homgr(B[n],B[n+m]) \to \Homgr(B,B[n+m])$ is identified with multiplication $B_n \tensor B_m \to B_{n+m}$ so that $a \circ b$ is identified with $ba$ in $B$.  Therefore we can think of $P$ as a right graded $B$ module.  The important thing is that the single grading makes $P$ both a graded $A$ module and a graded $B$ module.  For any graded $B$ module $M$, $F(M) \cong P \tensor_B M$.  In this section, we will study the notion of strongly graded Morita equivalence for GWAs.

Now, consider a map rings $f:R \to S$ and suppose that $\sg_R \in \Aut(R)$ and $\sg_S \in \Aut(S)$.  Say that $f$ is \emph{$\sg$ equivariant} if $\sg_S \circ f = f \circ \sg_R$.  Given GWA data $(R,\sg,v)$ and a $\sg$ equivariant automorphism $\psi \in \Aut(R)$ we can construct an isomorphism $\Psi:T(R,\sg,v) \to T(R,\sg,\psi(v))$ extending $\psi$ and satisfying $\Psi(\tp) = \tp$ and $\Psi(\tm) = \tm$.  We can view a graded $T(R,\sg,\psi(v))$ module as a graded $T(R,\sg,v)$ module through $\Psi$ and this sets up a strongly graded Morita equivalence between these two GWAs.  We will see that any strongly graded Morita equivalence leads to an equivariant isorphism of the ground rings.

\begin{thm} \label{mornec}
For $j =1,2$ let $(R_j,\sg_j,v_j)$ be GWA data.  Assume that there is some $\la \in \maxspec R_j$ such that $\la \cap \{ \sg_j^n(v_j) : n \in \Z\} = \empt$.  Suppose that $T(R_1,\sg_1,v_1)$ and $T(R_2,\sg_2,v_2)$ are strongly graded Morita equivalent.  Then there is an equivariant isomorphism $\rho:R_1 \to R_2$ such that $\la \in \maxspec R_2$ contains a $\sg_2$ translate of $v_2$ if and only if $\rho^{-1}(\la)$ contains a $\sg_1$ translate of $v_1$.  Moreover, the equivalence restricts to an equivalence between $\O^+_1$ and $\O^+_2$.
\end{thm}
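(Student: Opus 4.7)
The plan is to reconstruct the ring $R_j$, the automorphism $\sg_j$, and the subset $Z^{\sg_j} \subset \maxspec R_j$ intrinsically from the category $T_j\text{-}\grMod$ equipped with its shift auto-equivalence, then invoke Lemma~\ref{crito} for the last statement.  To produce $\rho$, I would identify $R_j$ with the center $Z(T_j\text{-}\grMod)$, i.e., the ring of natural transformations $\id \Rightarrow \id$.  The natural right $R_j$-bimodule structure on graded $T_j$-modules furnishes an injective ring map $R_j \hookrightarrow Z(T_j\text{-}\grMod)$; for surjectivity, any natural transformation $z$ is determined by $z_{T_j} \in \Endgr(T_j) = R_j$, since naturality against the maps in $\Homgr(T_j[n], T_j[m]) \cong (T_j)_{m-n}$ pins $z_{T_j[n]}$ down as right multiplication by $\sg_j^n(z_{T_j})$, and the shifts $T_j[n]$ form a system of projective generators.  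A direct calculation shows that $[1]$ acts on this center, and restricts to $\sg_j$ on $R_j$.  Since a strongly graded Morita equivalence $F$ induces a shift-equivariant ring isomorphism between centers, we obtain the desired $\sg$-equivariant isomorphism $\rho : R_1 \iso R_2$.

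For the second claim, Remark~\ref{notinzsg} characterizes the simple modules $S^\la[n]$ with $\la \notin Z^{\sg_j}$ as exactly those simples $S$ for which $\Extgr^1(S,T) \neq 0$ forces $T \cong S$ for every simple $T$.  This is intrinsic, so $F$ matches the two collections of ``generic'' simples.  An element $r \in R_j$, viewed in the center, acts on $S^\la$ as the scalar $r \bmod \la$; compatibility of these scalar actions under $F$ and $\rho$ then forces $\rho^{-1}$ to carry any generic $\mu \in \maxspec R_2 \setminus Z^{\sg_2}$ to a generic $\la \in \maxspec R_1 \setminus Z^{\sg_1}$.  By complementation of the underlying bijection, $\rho^{-1}$ matches $Z^{\sg_2}$ with $Z^{\sg_1}$, which is exactly the second claim.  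The hypothesis that some $\la$ has $\chi_\la = \empt$ ensures generic simples exist and this argument is non-vacuous.

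For the third assertion, I would invoke Lemma~\ref{crito}: $\Op$ is the Serre subcategory of graded modules with no subquotient isomorphic to any $A^{\la,-}$, and the $A^{\la,-}$ are distinguished among simple graded modules by having $\delta$ unbounded below.  I expect the main obstacle to be giving a clean categorical description of this ``tail'' condition; unlike the generic simples handled above, it is not immediately encoded by $\Ext$, so one must extract it through careful analysis of how simple modules embed as subquotients of shifts of the cyclic modules $A^\la$, leveraging the already-established $\sg$-equivariance of $\rho$ to track shifts and supports across the equivalence.  Once the $A^{\la,-}$'s in the two categories are matched under $F$, the restriction $\Op_1 \iso \Op_2$ follows from Lemma~\ref{crito}.
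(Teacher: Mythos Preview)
Your construction of $\rho$ via the center of the category is correct and is a genuinely different route from the paper's.  The paper instead works with the projective bimodule $P = F(B)$: it first shows $P_0$ is a rank-one projective $R_1$-module (this is where the hypothesis $\chi_\la = \empt$ is used, via Remark~\ref{notinzsg} and a surjectivity count), and then extracts $\rho$ from the chain $R_1 \to \Endgr(P) = R_2 \to \End_{R_1}(P_0) \cong R_1$ together with a Noetherian injectivity/surjectivity argument.  Your center argument bypasses the rank-one step entirely and is cleaner; the paper even foreshadows this at the end of \S2 but does not carry it out.  Your treatment of the second claim, reading off $\rho^{-1}(\mu)$ from the scalar by which the center acts on generic simples, is essentially the same idea as the paper's, phrased more categorically.

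Where your proposal is genuinely incomplete is the third claim.  You correctly identify that one must recognise the simples $A^{\la,-}$ intrinsically, but you do not supply the mechanism.  The paper's device is short and worth knowing: since $\Homgr(B,M[n]) \cong \Homgr(P,F(M)[n])$, the set $\delta(M)$ is computed on the $A$-side as $\{n : \Homgr(P,F(M)[n]) \neq 0\}$.  Fixing a finite presentation $\bigoplus_j A[n_j] \onto P$, one has $\Homgr(P,S[n]) \hookrightarrow \bigoplus_j S_{n-n_j}$ for any simple $S$, so $\{n : \Homgr(P,S[n]) \neq 0\}$ is unbounded below precisely when $\delta(S)$ is.  Thus $F$ carries each $B^{\la,-}$ to some $A^{\mu,-}$, and Lemma~\ref{crito} finishes the argument exactly as you planned.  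Your proposed route through ``subquotients of shifts of $A^\la$'' is vaguer and would require more work; the $\Homgr(P,-[n])$ trick is the missing idea.
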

\begin{proof}
Set $A = T(R_1,\sg_1,v_2)$ and $B = T(R_2,\sg_2,v_2)$.  Let $P$ be the finitely generated, projective, graded $A$ corresponding to $B$ under a strongly graded Morita equivalence $F:B-\grMod \to A-\grMod$.  Note that since $P$ is a summand of $\oplus_j{ A[n_j] }$ each graded piece $P_i$ is a projective left $R_1$ module.  Recall that $P$ is an $(A,R_1)$ bimodule such that for any $p \in P_i, r \in R_1$ we have $r p = p \sg^i(r)$.  Thus $P_i$ has the same rank as a left module and as a right module.  For now, assume that for each $P_0$ has rank 1.  Then $\End_{R_1}(P_0) = P_0^{\vee} \tensor_{R_1} P_0 \iso R_1$.  Note that $\Homgr(P,P[n]) = \Homgr(B,B[n]) = B_n$ and in particular $\Endgr(P) = B_0 = R_2$.  Recall that every degree preserving map of graded $A$ modules respects the $(A,R)$ bimodule structure.  Therefore we can define maps $\rho:R_1 \to \Endgr(P) = R_2$ by $\rho(r)(p) = pr$ and $\phi:\Endgr(P) \to \End_{R_1}(P_0) \iso R_1$ by restriction.  By definition, for $p \in P_0$ and $f \in \Endgr(P)$ we have $f(p) = p\phi(f)$.  Let us check that $\rho \circ \sg^{-1}_1 = \sg^{-1}_2 \circ \rho$.  First note that if $x \in \Endgr(P,P[1])$ corresponds to $\tp$ in $B_1$ then for any $f \in \Endgr(P)$ we have $f[1] \circ x = x \circ \sg^{-1}_2(f)$ and $x \circ f = 0$ if and only if $f = 0$.  Hence it suffices to show that $x \circ \rho(\sg^{-1}_1(r)) = \rho(r)[1] \circ x$.  Observe that $\rho(r)[1]$ is not right multiplication by $r$ but instead right multiplication by $\sg_1^{-1}(r)$.  Now for any $r \in R_1$ and $p \in P$ we compute
\[
(x \circ \rho(\sg^{-1}(r)))(p) = x( p \cdot \sg^{-1}(r) ) = x(p) \cdot \sg^{-1}(r) = (\rho(r)[1] \circ x)(p)
\]
We conclude that $\rho\circ \sg_1 = \sg_2 \circ \rho$.

Clearly $\phi \circ \rho = \id_{R_1}$.  Hence, $\rho$ is injective and $\phi$ is surjective.  Interchanging the roles of $A$ and $B$ we obtain $\rho':R_2 \to R_1$ and $\phi':R_1 \to R_2$, injective and surjective respectively.  Now any surjective ring endomorphism of a noetherian ring is automatically an automorphism.  Thus $\phi' \circ \phi$ is an isomorphism.  This implies that $\phi$ is injective and it follows that $\rho$ and $\phi$ are inverse isomorphisms.  Therefore $\rho$ is the equivariant isomorphism that we wanted.

Next we will show that $P_0$ does indeed have rank 1, using the Lemmata from \S 2.  The hypothesis and Lemma \ref{simp} imply that there is a $\la \in \maxspec R_2$ such that $B^\la$ is simple.  As mentioned in Remark \ref{notinzsg}, if $T$ is a simple $B$ module such that $\Extgr^1(T,B^\la) \neq 0$ then $B^\la \cong T$.  Therefore $F(B^\la)$ is a simple module with the same property, and so we must have $F(B^\la) = A^\mu$ for some $\mu$ such that $A^\mu$ is simple.  

Fix a presentation $\oplus_j A[n_j] \onto P$.  Now $A[n_j] \tensor R/\mu = A^{\sg^{n_j}(\mu)}[n_j] \cong A^\mu$ by Lemma \ref{gr}.   Hence we get a surjection $\oplus_j A[n_j] \tensor_{R_1} R/\mu = \oplus_j A^\mu \onto P/P\mu$ is a surjection.  Since $A^\mu$ is simple, this implies that $P/P\mu \cong (A^\mu)^{\oplus m}$ where $m = \dim_\C P_0/P_0\mu$.  Under the Morita equivalence, this corresponds to a surjection $B \onto (B^\la)^{\oplus m}$.  However, up to scaling there is only one graded map $B \to B^\la$, and therefore any map $B \to (B^\la)^{\oplus m}$ factors as $B \onto B^\la \to (B^\la)^{\oplus m}$, which is not surjective unless $m = 1$.  We conclude that $m = \dim_\C P_0/P_0\mu = 1$ so $P_0$ has rank 1 as a projective module.  

Now, set $\mu = \rho^{-1}(\la)$.  As we argued above, if $\la$ does not contain a $\sg_2$ translate of $v_2$ then $B^\la$ is simple and $F(B^\la) = P/\la(P) = P/P\mu \cong A^\mu$ is simple, so $\mu$ does not contain a $\sg_1$ translate of $v_1$.

We must argue that $F$ preserves $\Op$.  Note that for a graded $B$ module $M$, we compute  $\delta(M) = \{n \in \Z : \Homgr(B,M[n]) \neq 0\}$.  Therefore $\delta(M) = \{ n \in \Z : \Homgr(P,F(M)[n])\neq 0\}$.  Now, among graded simple modules, those of the form $B^{\la,-}$ are characterized by the property that $\delta(B^{\la,-})$ is not bounded below.   Let $S$ be a simple graded $A$ module.  Returning to our presentation $\oplus_j A[n_j] \onto P$ we see that $\Homgr(P,S[n]) \subset \oplus_j \Homgr(A[n_j],S[n]) \cong S_{n-n_j}$ as $\C$ vector spaces.  So $\{ n \in \Z : \Homgr(P,S[n]) \neq 0\}$ is unbounded below if and only if $\delta(S[n])$ is not bounded below if and only if $S[n]$ is of the form $A^{\mu,-}$ for some $\mu \in \maxspec R_1$.  It follows that for every $\la \in \maxspec R_2$ there is a $\mu \in \maxspec R_2$ such that $F(B^{\la,-}) \cong A^{\mu,-}$.   By \ref{crito}, if $M$ is not in $\Op_2$ then $M$ contains some $B^{\la,-}$ as a subquotient.  But then $F(M)$ contains some $A^{\mu,-}$ as a subquotient, so $F(M)$ is not in $\Op_1$.  Applying the same reasoning to an inverse equivalence, we see that $F$ restricts to an equivalence $\Op_2 \to \Op_1$.
\end{proof}

Recall that the classical GWAs are defined by data $(\C[h],\tau,v)$ where $\tau(p(h)) = p(h+1)$.  In an article by Bavula and Jordan we find the following theorm \cite[Theorem 3.8]{BJ} concerning the isomorphism problem.
\begin{thm*}
Let $v_1,v_2 \in \C[h]$.  Then $T(v_1) \cong T(v_2)$ if and only if there exist $\eta,\nu \in \C$ with $\eta \neq 0$ such that $v_2(h) = \eta v_1(\nu \pm h)$.
\end{thm*}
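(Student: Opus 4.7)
The plan is to prove the two directions separately. The $(\Leftarrow)$ direction is constructive, exhibiting an isomorphism as a composition of three elementary moves. The $(\Rightarrow)$ direction requires recovering the $\Z$-grading on $T(v)$ from the bare algebra structure, which will be the main obstacle.

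For $(\Leftarrow)$, three elementary isomorphisms suffice. (a) The map fixing $\C[h]$ and sending $\tp \mapsto \eta\tp$, $\tm \mapsto \tm$ respects the relations \eqref{gwarelations} with $v$ replaced by $\eta v$, giving $T(v) \cong T(\eta v)$. (b) The translation $\psi(h) = h + \nu$ commutes with $\tau$, so the $\sg$-equivariant construction recalled just before Theorem \ref{mornec} yields $T(v(h)) \cong T(v(h+\nu))$. (c) Swapping the roles of $\tp$ and $\tm$ and checking relations gives $T(\C[h],\tau,v(h)) \cong T(\C[h],\tau^{-1},v(h+1))$; composing with the $\C$-algebra automorphism $\psi(h) = -h$ of $\C[h]$, which conjugates $\tau^{-1}$ to $\tau$ and extends (since both sides are now $\sg$-equivariant in the appropriate sense) to an algebra isomorphism, produces $T(v(h)) \cong T(v(-h+1))$. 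Composing (a), (b), (c) and absorbing additive constants into $\nu$ realizes every $v_2(h) = \eta v_1(\nu \pm h)$.

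For $(\Rightarrow)$, let $\Phi: T(v_1) \iso T(v_2)$ be any algebra isomorphism. The strategy is to reduce, after composing with symmetries above, to the case where $\Phi$ preserves the $\Z$-grading. Once $\Phi$ is graded, its degree-$0$ restriction is an automorphism $\psi:\C[h] \to \C[h]$ that intertwines $\tau$ with $\tau$, forcing $\psi(h) = h + \nu$. Then $\Phi(\tp) = f(h)\tp$ and $\Phi(\tm) = g(h)\tm$ for some $f,g \in \C[h]$, and the identity $\Phi(\tp\tm) = \psi(v_1)$ gives $f(h)g(h+1) v_2(h) = v_1(h+\nu)$; applying the same analysis to $\Phi^{-1}$ and using that $\C[h]^{\times} = \C^{\times}$, the product $f(h)g(h+1)$ must lie in $\C^{\times}$, so $v_2 = \eta v_1(h+\nu)$ for some $\eta \in \C^{\times}$. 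The sign $-h$ appears precisely when the swap in (c) is required to make $\Phi$ graded.

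The crux is thus the intrinsic recovery of the grading. My plan is to use the homological apparatus of Section 2: the category $\Op$ is defined intrinsically by Lemma \ref{crito}, so $\Phi$ must map $\Op$ (or its image under the swap) to $\Op$; the classification of simple graded modules in Lemma \ref{simp} together with the Ext-computations in Theorem \ref{extcomp} characterize the degree-$0$ subring $\C[h] \subset T(v)$ as the subring acting diagonally, via the $(A,R)$ bimodule structure of Lemma \ref{artgeom}, on the distinguished simple modules $S^\la$. The $\Z$-grading then corresponds to a maximal torus action compatible with $\C[h]$, which is unique up to the single reversal $\tp \leftrightarrow \tm$; this sign ambiguity is the source of the $\pm$ in the statement. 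The degenerate case $v \in \C^{\times}$, where $T(v)$ is merely an Ore localization of a polynomial ring in two variables and the homological invariants degenerate, can be handled by direct inspection.
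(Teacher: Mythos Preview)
The paper does not prove this theorem at all: it is quoted verbatim from Bavula--Jordan \cite[Theorem 3.8]{BJ} as external input, and is used only to observe that strongly graded Morita equivalence is genuinely weaker than isomorphism. There is therefore no ``paper's own proof'' to compare your attempt against.

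That said, your $(\Leftarrow)$ direction is fine; the three elementary isomorphisms are correct and well known. Your $(\Rightarrow)$ direction, however, has a real gap at the crucial step. You propose to recover the $\Z$-grading intrinsically by invoking $\Op$, Lemma~\ref{crito}, the bimodule structure of \S2, and Theorem~\ref{extcomp}. But every one of these constructions already presupposes the grading. The category $\Op$ is the category of \emph{graded} modules on which $\tm$ acts locally nilpotently; Lemma~\ref{crito} characterizes it among graded modules via the graded objects $A^{\la,-}$; the right $R$-action $x\cdot r = x\,\sg^{\deg x}(r)$ is defined degree by degree; and the Ext computations in Theorem~\ref{extcomp} are $\Extgr$, taken in the graded category. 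None of this is available to an abstract isomorphism $\Phi:T(v_1)\to T(v_2)$ of ungraded algebras, so the argument as written is circular. The actual Bavula--Jordan proof proceeds by more direct ring-theoretic means (analysis of units, normal elements, and the height-one prime spectrum), not via the graded-module homological machinery developed here.
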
   
One consequence of Theorem \ref{mornec} (or Hodges \cite[Lemma 2.4]{H}) is that $T(\C[h],\sg, h(h+1))$ and $T(\C[h],\sg,h(h+2))$ are strongly graded Morita equivalent but by the theorem above they not isomorphic.

Now we are going to develop a technique to produce strongly graded Morita equivalences between GWAs.  First we define an algebra.  Consider the oriented cycle $Q$ of length $n>0$ viewed as a quiver.  Let $I$ be the vertex set with an action of $\Z$ generated by the automorphism which sends a vertex to the next vertex along the cycle, written as $i \mapsto i+1$.  Let $a_i$ be the edge joining $i$ to $i+1$.  Form the double quiver $\bar{Q}$ which is constructed from $Q$ by adding, for every edge $a \in Q$, a dual edge $a^*$ with the opposite orientation.  So $a_i^*$ joins $i+1$ to $i$.  Let $R$ be a finitely generated commutative $\C$ algebra.  We first form the path algebra $R\bar{Q}$.  Let $RI$ be the $R$ algebra generated by central orthogonal idempotents $\{1_i\}_{i \in I}$.  Let $R \bar{E}$ be the free symmetric $R$ bimodule generated by the edges of $\bar{Q}$.  $R \bar{E}$ has an $R I$ bimodule structure determined by the condition that $1_i e 1_j$ is equal to $e$ if $source(e) = i$ and $tail(e) = j$ and is zero otherwise.  The path algebra is defined by $R \bar{Q} = T^{\tensor}_{RI}{RE}$ and has an $R$ module basis identified with paths in $\bar{Q}$ as follows.  To a path $e_1 e_2 \dotsm e_n$ through $\bar{Q}$ we associate $e_1 \tensor e_2 \tensor \dotsb \tensor e_n \in (RE)^{\tensor n}$.  Now let $\hat{\sg}$ be an automorphism of $RI$ satisfying $\hat{\sg}(e_i) = e_{i+1}$.  Observe that $\hat{\sg}(\sum_{i \in I}{r_i e_i}) = \sum_{i \in I}{\sg_i(r_i) e_{i+1}}$ for some collection $\{\sg_i\}_{i \in I}$ of automorphisms of $R$.  So we can also think of $\hat{\sg}$ by assigning an automorphism $\sg_i$ of $R$ to each edge $a_i$ of $Q$.  
\begin{defn}
Given $\br = \sum_{i \in I}{r_i e_i} \in RI$ there is an algebra $\Pi = \Pi(R,\hat{\sg},\br)$ defined to be the quotient of the path algebra by the relations
\begin{equation}\label{biggwarelations}
 x a_i = a_i \sg_i(x), \quad \sg_i(x) a_i^* = a_i^* x, \quad a_i a_i^* = r_ie_i, \quad a_i^*a_i = \sg_i(r_i)e_{i+1},
\end{equation}
for any $x \in R$.  
\end{defn}
If $n = 1$ then $\Pi = T(R,\sg,\br)$.  For each $i \in I$, let $\Pi_i = e_i \Pi e_i$.  Note that both $\Pi$ and $\Pi_i$ naturally contain $R$ as a subring.  Moreover the path algebra has a natural grading with $\deg(RI) = 0, \deg(a) = \frac{1}{n},$ and $\deg(a^*) = - \frac{1}{n}$ for $a \in Q$.  The relations above are homogeneous and thus the grading descends to a grading on $\Pi$ and $\Pi_i$.

We define an automorphism $\theta_i$ of $R$ and an element $v_i$ by
\begin{align} \label{thetai}
 \theta_i & = \sg_{i-1} \circ \dotsm \circ \sg_{i+1} \circ \sg_i \\ \label{vi}
 v_i & = r_i \cdot \sg_i^{-1}(r_{i+1}) \cdot (\sg_i^{-1} \circ \sg_{i+1}^{-1})(r_{i+2}) \dotsm (\sg_i^{-1} \circ \dotsm \circ \sg_{i-2}^{-1})(r_{i-1}).
\end{align}
Informally, one obtains $\theta_i$ by composing the automorphisms $\sg_j$ in a circle starting at vertex $i$, and similarly one obtains $v_i$ by pulling back $r_j$ by the composition of the $\sg$'s on the backwards arc from $i$ to $j$ and multiplying all of these together.  There is a natural map $f:T(R,\theta_i,v_i) \to \Pi_i$.  We define the map on generators by $f(r) = r e_i$ for $r \in R$ and $f(\tp) = \ta := a_i a_{i+1} \dotsm a_{i-1}$ and $f(\tm) = \ta^* := a^*_{i-1} \dotsm a_i^*$.  It is easy to check that $f(\tp)$ and $f(\tm)$ satisfy the neccessary relations.  Therefore we get a map $f: T(R,\theta_i,v_i) \to \Pi_i$ which respects the natural grading on both sides.

\begin{lemma}\label{prez}
The natural map $T(R,\theta_i, v_i) \to \Pi_i$ is an isomorphism.
\end{lemma}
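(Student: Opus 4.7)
The plan is to prove the isomorphism by checking that $f$ is well-defined, surjective, and injective, using a natural basis of $\Pi$ as a left $R$-module coming from reduced paths in $\bar Q$.

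\textbf{Step 1 (well-definedness).} First I would verify that $\ta$ and $\ta^*$ satisfy the defining relations of $T(R,\theta_i,v_i)$. The relation $r\ta = \ta\theta_i(r)$ follows by iterating $xa_j = a_j\sg_j(x)$ around the cycle: $ra_i a_{i+1}\cdots a_{i-1} = a_i\sg_i(r)a_{i+1}\cdots a_{i-1} = \cdots = a_i a_{i+1}\cdots a_{i-1}(\sg_{i-1}\circ \cdots\circ\sg_i)(r)$, and the right-hand composition is precisely $\theta_i(r)$ by \eqref{thetai}. The dual relation $\theta_i(r)\ta^* = \ta^* r$ is analogous. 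For $\ta\ta^* = v_i$, repeatedly contract $a_j a_j^* = r_j e_j$ starting from the innermost pair, using the commutation relations to push each $r_j$ to the far left; the accumulated $\sg$-twists match the definition \eqref{vi} of $v_i$ exactly. The relation $\ta^*\ta = \theta_i(v_i)$ is checked the same way, and this gives a graded algebra map $f:T(R,\theta_i,v_i)\to \Pi_i$.

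\textbf{Step 2 (surjectivity).} Next I would show that every element of $e_i\Pi e_i$ lies in the image of $f$. An element of $\Pi$ is an $R$-linear combination of words in the arrows of $\bar Q$; for it to lie in $e_i\Pi e_i$, each word must be a closed path at $i$. Whenever a word contains a subword $a_j a_j^*$ or $a_j^* a_j$, apply \eqref{biggwarelations} to replace it with $r_j e_j$ or $\sg_j(r_j) e_{j+1}$ respectively, then use $xa_k = a_k\sg_k(x)$ (and its inverse) to move the resulting scalar to the left. Each such reduction decreases the arrow length by $2$ while preserving degree, so the process terminates at a word that is \emph{monotone}: it uses only $a$'s or only $a^*$'s. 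A monotone closed path at $i$ must traverse the cycle an integer number of times, so it is one of $e_i$, $\ta^k$ ($k > 0$), or $(\ta^*)^k$ ($k > 0$). All such monotone paths lie in the image of $f$.

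\textbf{Step 3 (injectivity).} Both $T(R,\theta_i,v_i)$ and $\Pi_i$ are $\Z$-graded (the $\frac1n\Z$-grading on $\Pi$ restricts to an integer grading on $e_i\Pi e_i$ since closed paths have net arrow count divisible by $n$), and $f$ preserves the grading. In each degree $k\ge 0$ we have $T_k = R\cdot \tp^k$, a free left $R$-module of rank one, and $f(\tp^k) = \ta^k$. Injectivity will therefore follow once we know that the monotone paths $\{\ta^k, (\ta^*)^k : k\ge 0\}$ are left $R$-linearly independent in $\Pi_i$. For this I would argue that the ``reduced'' paths in $\bar Q$ (those free of $a_j a_j^*$ and $a_j^* a_j$ subwords) give a left $R$-module basis for $\Pi$: the path algebra $R\bar Q$ is freely spanned over $R$ by all paths, and the relations \eqref{biggwarelations} admit only the two overlap ambiguities $a_j a_j^* a_j$ and $a_j^* a_j a_j^*$, each of which resolves to $r_j a_j$ (resp.\ $\sg_j(r_j)a_j^*$) after appropriate commutation of the scalar through one arrow, so the rewriting system is confluent by the Diamond Lemma. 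Restricting the basis to closed paths at $i$ gives exactly $\{\ta^k, (\ta^*)^k : k\ge 0\}$, each of whose $R$-spans is free of rank one in the appropriate degree.

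\textbf{Main obstacle.} Step 3 is the delicate point: the confluence check must track how $R$-coefficients get twisted by the various $\sg_j$ when moved across arrows, so care is needed in both overlap resolutions. Once the basis claim is established, matching graded pieces shows $f$ is an isomorphism of left $R$-modules in every degree, hence an isomorphism of graded algebras.
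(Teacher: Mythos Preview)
Your proposal is correct and follows the same overall strategy as the paper: surjectivity via reduction of arbitrary closed paths at $i$ to monotone paths, and injectivity by showing that the monotone paths $\ta^k,(\ta^*)^k$ are left $R$-torsion-free in $\Pi$.

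The difference is in the packaging of the injectivity step. The paper's proof sketch works inside the twisted path algebra $S$ and gives an ad hoc argument: it introduces a partial order $q<p$ on paths (meaning $p$ reduces to an $R$-multiple of $q$ modulo $J$), defines well-defined transition coefficients $r(p,q)$, and then argues directly that if $s\ta^k \in J$ one can write this as a sum indexed by paths with coefficients summing to zero, forcing $s=0$. Your Step~3 replaces this by Bergman's Diamond Lemma: you identify the only overlap ambiguities $a_j a_j^* a_j$ and $a_j^* a_j a_j^*$, check that each resolves (using the built-in skew-commutation $x a_j = a_j \sg_j(x)$), and conclude that the reduced (i.e.\ non-backtracking, hence monotone) paths form a left $R$-basis of $\Pi$. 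This is the standard and cleaner way to formalize exactly the confluence that the paper verifies by hand; the paper's coefficients $r(p,q)$ are precisely the normal-form coefficients that the Diamond Lemma guarantees are well defined. Your caveat in the ``Main obstacle'' paragraph is apt: since $R$ is not central in $S$, one must work with $S$ as a free left $R$-module on paths and track the $\sg_j$-twists when moving scalars across arrows, but once the two overlaps are seen to resolve this goes through.
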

%\begin{comment}
\begin{proof}[Proof sketch.]
Since $aa^*$ and $a^*a$ are in $RI$ for any edge $a$, the ring $\Pi_i$ is generated over $R$ by $\ta$ and $\ta^*$.  Therefore $f$ is surjective.  Note that $f$ is injective if and only if $R$ acts without torsion on $R\ta^k$ and $R(\ta^*)^k$ for any $k$. To prove this, we consider the twisted path algebra of $\bar{Q}$.  Let $(R\bar{E})_{\hat{\sg}}$ be the $RI$ bimodule obtained from $R E$ by redefining the left action by $\bm{a} \cdot e = \hat{\sg}(\bm{a})e$.  Then $S = T^\tensor_{R I}{ (R\bar{E})_{\hat{\sg}} }$ is the twisted path algebra.  As a left (and right) $R$ module $S \cong R \tensor_\C \C\bar{Q}$.  This algebra is like the path algebra except that instead of containing $R$ as a central subalgebra, we have $r a_i = a_i \sg_i(r)$ and $\sg_i(r) a^*_i = a^*_i r$.  Note that $S$ has a bigrading with $\deg(e_i) = (0,0), \deg(a_i) = (1,0)$, and $\deg(a_i^*) = (0,1)$. Set the total degree equal to the sum of the bidegrees. Let $x_i = a_i a_i^* - r_ie_i$ and $y_i = a_i^* a_i - \sg_i(r_i)e_{i+1}$.  Then $\Pi$ is a quotient of $S$ by the ideal $J = (x_i,y_i)_{i\in I}$.  Now we will show that if $s \ta^k \in J$ then $s = 0$ and similarly for $(\ta^*)^k$.

For paths $p,q$ say that $q < p$ if the total degree of $p$ is greater than the total degree of $q$ and $p$ is equal to a multiple of $q$ modulo $J$.  For $q < p$ there are elements $r(p,q) \in R$ such that $p = r(p,q)q$ modulo $J$ and if $q' < q < p$ then $r(p,q') = r(p,q)r(q,q')$.  Note that if $e_i p e_j = p$ then $e_i q e_j = q$ for all $q < p$.  Now, write $p = \sum_{q < p}{ s_q p}$ where $\sum{s_q} =1$.  Then $\sum_{q < p}{s_q p} = \sum_{q < p}{ s_q r(p,q) q} = s_{q_0} r(p,q_0)q_0$ if and only if for all $q \neq q_0$, $s_q r(p,q) = 0$.  Now $r(p,q_0) = r(p,q)r(q,q_0)$ so $s_q r(p,q_0) = 0$ for all $q_0 < q < p$ and $s_{q_0} = 1 - \sum_{q_0 \neq q < p}{s_q}$.  So if $q_0$ is minimal then $s_{q_0} r(p,q_0) = (1 - \sum_{q_0 \neq q < p}{s_q})r(p,q_0) = r(p,q_0)$.  The minimal $q_0$ for $p$ of degree $(nk + l,l)$ satisfying $e_i p e_i = p$ is $\ta^k$.  This means that given a path $p$ of degree $(nk+l,l)$ satisfying $e_i p e_i = p$, there is a well defined element $r(p,k) \in R$ such that if $p = s \ta^k$ mod $J$ then $s = r(p,k)$.  Suppose that $s\ta^k = 0$ modulo $J$.  Then we can write zero as $0 = \sum_{p,j}{ s_{p,j} p }$ such that for each path $p$, $\sum_j{s_{p,j}} = 0$ and $e_i p e_i = p$; and $s \ta^k = \sum_{p,j}{ s_{p,j}r(p,k)\ta^k }$.  But $\sum_{p,j}{s_{p,j} r(p,k)} = 0$ and therefore $s = 0$.  We deal with $(\ta^*)^k$ in a similar way.  Hence no multiples of $\ta^k$ or $(\ta^*)^k$ are zero in $\Pi$.  So we see that $f$ is injective as well.
\end{proof}
%\end{comment}

\begin{example} 
Let $\bm{\alpha} = \sum_{i \in I}{\alpha_i e_i} \in \C I$.  Then the deformed preprojective algebra of type $\bm{A}$ denoted $\Pi^{\bm{\alpha}}(Q)$ is the quotient of the path algebra $\C \overline{Q}$ by the relation $\sum_{i \in I}{[a_i,a^*_i]} - \bm{\alpha}$.  Take $R = \C[h]$, $\sg_i$ to be translation by $\alpha_{i+1}$, and $\br = h \cdot 1 = \sum_{i \in I}{he_i}$.  Then $S = \Pi(R,\hat{\sg},\br)$ is isomorphic to $\Pi^{\bm{\alpha}}(Q)$.  Indeed, there is an obvious map $f:\C \overline{Q} \to S$ given by $f(e_i) = e_i, f(a_i) = a_i$ and $f(a^*_i) = a_i$.  The defining relations \eqref{biggwarelations} of $S$ imply that $f$ factors through the preprojective algebra.  On the other hand, there is a homomorphism $g:S \to \Pi^{\bm{\alpha}}(Q)$ defined by $g(e_i) = e_i, g(h) = \sum_{i \in I}{a_ia^*_i}, g(a_i) = a_i$, and $g(a^*_i) = a^*_i$.  The reader should check that $g$ respects the defining relations \eqref{biggwarelations}.  Clearly $f$ and $g$ are mutually inverse.  
\end{example}

Let $\Pi_{ij}$ denote the $(\Pi_i,\Pi_j)$ bimodule $e_i \Pi e_j$.  Notice that $\Pi_{ij} \tensor_{\Pi_j} \Pi_{ji} = e_i \Pi e_j \Pi e_i \subset \Pi_i$.  It is straightforward to compute that 
\begin{align*}
\alpha_{ij} & = a_i a_{i+1} \dotsm a_{j-1} a_{j-1}^* \dotsm a^*_i = r_i \cdot \sg_i^{-1}(r_{i+1}) \cdot (\sg_i^{-1}\circ \sg_{i+1}^{-1})(r_{i+2}) \dotsm (\sg_i^{-1} \circ \dotsm \circ \sg_{j-2}^{-1})(r_{j-1}) \\
\beta_{ij} & = a_{i-1}^* a_{i-2}^* \dotsm a_j^* a_j \dotsm a_{i-1} = \sg_{i-1}(r_{i-1}) \cdot (\sg_{i-1} \circ \sg_{i-2})(r_{i-2}) \dotsm (\sg_{i-1} \circ \dotsm \circ \sg_j)(r_j)
\end{align*}
and these elements belong to the ideal $e_i \Pi e_j \Pi e_i \subset \Pi_i$.  Furthermore, with the notation of \eqref{vi}, we see that $v_i = \alpha_{ij} \theta_i^{-1}(\beta_{ij})$.  Set $\theta_{ij} = \sg_{i-1}  \circ \dotsm \circ \sg_{j+1} \circ \sg_j$ and observe that $\theta_{ij} \circ \theta_{ji} = \theta_i$ and 
\begin{align*}
 \theta_i(\alpha_{ij}) & =  \theta_{ij}( \beta_{ji}), & \beta_{ij} & = \theta_{ij}( \alpha_{ji}), \\
 \theta_j(\alpha_{ji}) & = \theta_{ji}( \beta_{ij}), & \beta_{ji} & = \theta_{ji}( \alpha_{ij}).
\end{align*}

\begin{lemma}\label{morfactor}
Let $(R,\theta,v)$ be GWA data.  Suppose that there is a factorization $v = uw$ such that the pairs $u,w$ and $u,\theta(w)$ are relatively prime.  Then $T(R,\theta,v)$ and $T(R,\theta, \theta(w)u)$ are (graded) Morita equivalent.
\end{lemma}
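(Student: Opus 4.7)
The strategy is to present both GWAs as corner rings at full idempotents in a single many-vertex GWA and then invoke the standard corner-ring Morita theorem; the graded hypotheses in the paper's definition of $\Pi$ will give a \emph{strongly graded} equivalence for free.

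Concretely, take the oriented $2$-cycle with vertex set $\{0,1\}$ and form $\Pi = \Pi(R,\hat\sg,\br)$ with
\[
\sg_0 = \theta, \qquad \sg_1 = \id, \qquad r_0 = u, \qquad r_1 = \theta(w).
\]
Plugging into \eqref{thetai} and \eqref{vi} gives $\theta_0 = \sg_1\circ\sg_0 = \theta$, $\theta_1 = \sg_0\circ\sg_1 = \theta$, $v_0 = u\cdot\theta^{-1}(\theta(w)) = uw = v$, and $v_1 = \theta(w)\cdot u$. Lemma \ref{prez} then identifies the corner rings $\Pi_0$ and $\Pi_1$ as graded $\C$-algebras with $T(R,\theta,v)$ and $T(R,\theta,\theta(w)u)$, respectively.

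Next I would show that $e_0$ and $e_1$ are both full, i.e.\ $e_0\Pi e_1\Pi e_0 = \Pi_0$ and $e_1\Pi e_0\Pi e_1 = \Pi_1$. Specializing the $\alpha,\beta$ formulas just before the lemma to $n=2$ yields
\[
\alpha_{01} = u,\quad \beta_{01} = \theta(w),\quad \alpha_{10} = \theta(w),\quad \beta_{10} = \theta(u),
\]
all of which lie in the corresponding double ideals. The hypothesis $(u,\theta(w)) = R$ therefore makes the two-sided $\Pi_0$-ideal $e_0\Pi e_1\Pi e_0$ contain $1$, so it equals $\Pi_0$. Applying $\theta$ to the second hypothesis $(u,w) = R$ gives $(\theta(u),\theta(w)) = R$, and the same argument forces $e_1\Pi e_0\Pi e_1 = \Pi_1$.

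Fullness of both idempotents gives the standard Morita equivalences $\Pi_0\text{-}\grMod \simeq \Pi\text{-}\grMod \simeq \Pi_1\text{-}\grMod$, implemented by the bimodules $e_i\Pi$ and $\Pi e_i$. To obtain a strongly graded equivalence in the sense of the paper's definition, observe that the natural $\tfrac{1}{2}\Z$-grading on $\Pi$ restricts to honest $\Z$-gradings on $\Pi_0$ and $\Pi_1$, while $e_1\Pi e_0$, being spanned by paths of odd length from $0$ to $1$, is supported in degrees $\tfrac{1}{2}+\Z$. Reindexing by $-\tfrac{1}{2}$ turns $e_1\Pi e_0$ into a $\Z$-graded $(\Pi_1,\Pi_0)$-bimodule, and the tensor functor $e_1\Pi e_0 \otimes_{\Pi_0} -$ then manifestly preserves integer degrees and commutes with the shift $[1]$. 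The only real work is the fullness step, which is essentially immediate once the four $\alpha$--$\beta$ values are recorded; everything else is bookkeeping with the grading.
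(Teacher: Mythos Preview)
Your proof is correct and follows essentially the same route as the paper's: both realize the two GWAs as the corners $\Pi_0,\Pi_1$ of the two-vertex algebra $\Pi$ with $\sg_0=\theta$, $\sg_1=\id$, $r_0=u$, $r_1=\theta(w)$, compute the same four elements $\alpha_{01}=u$, $\beta_{01}=\theta(w)$, $\alpha_{10}=\theta(w)$, $\beta_{10}=\theta(u)$, and use the coprimality hypotheses to conclude that the Morita context maps are surjective. Your added remark about the $\tfrac{1}{2}$-shift needed to make $e_1\Pi e_0$ an honestly $\Z$-graded bimodule is a point the paper leaves implicit, so it is a welcome clarification rather than a deviation.
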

\begin{proof}
Let $Q$ be the oriented cycle of length 2.  Set $\sg_1 = \theta$ and $\sg_2 = \id$.  Let $r_1 = u$ and $r_2 = \theta(w)$ and $\Pi = \Pi(R,\sg,\br)$.  Then $T(R,\theta,v) \cong \Pi_1$ and $T(R,\theta, \theta(w)u) \cong \Pi_2$.  Using the notation above, we have
\begin{align*}
\alpha_{12} & = u & \beta_{12} & = \theta(w) \\
\alpha_{21} & = \theta(w) & \beta_{21} &= \theta(u) 
\end{align*}
By hypothesis the pairs $\alpha_{12},\beta_{12}$ and $\alpha_{21},\beta_{21}$ are coprime and therefore $\Pi_{ij} \tensor \Pi_{ji} = \Pi_i$ and $\Pi_{ji} \tensor \Pi_{ij} = \Pi_j$.  This means that the functors $\Pi_{ij} \tensor -$ and $\Pi_{ji} \tensor -$ induce inverse equivalences.
\end{proof}

Lemma \ref{morfactor} is a generalization of Theorem 2.3 and Lemma 2.4 in \cite{H}, where Hodges treats only the classical case.  In classical case we can get a more precise version of \ref{mornec}.  First we need to define an equivalence relation on polynomials in $\C[h]$.  Let $q:\C \to \C/\Z$ be the quotient map.  

\begin{defn}[Root type]  Let $\Z^f[h] \subset \Z[h]$ be the subset of totally factorizable integer polynomials $\Z^f[h] = \{ u(h) = \prod_i (h-n_i) : n_i \in \Z \}$.  Let $u_1 < u_2 < \dotsb < u_n$ and $v_1 < v_2 < \dotsb < v_m$ and put $u(h) = \prod_{i=1}^n{ (h-u_i)^{d_i} }$ and $v(h) = \prod_{i=1}^m{ (h-v_i)^{e_i}}$.  We say that $u$ and $v$ have the same type $u \sim v$ if $n = m$ and $d_i = e_i$ for all $i$.  This defines an equivalence relation on $\Z^f[h]$.  Now let $u,v$ be polynomials in $\C[h]$ and write $v(h) = \prod v_i(h-a_i), u(h) = \prod u_j(h-b_j)$ with $v_i,u_j \in \Z^f[h]$ such that the collections $\{a_i\}$ and $\{b_j\}$ are distinct modulo $\Z$.  We say that $u$ and $v$ have the same \emph{type}, $u \sim v$ if there is a bijection $i \mapsto j(i)$ such that
\begin{itemize}
	\item $q(a_i) = q(b_{j(i)})$ and
	\item $u_i \sim v_{j(i)}$.
\end{itemize}
Loosely speaking, two polynomials have the same root type if they have the same classes of roots modulo $\Z$ and if in each class of roots modulo $\Z$ considered under the natural ordering, the multiplicities occur in the same order.
\end{defn}

In order to prove Theorem \ref{classicalcase} (below) we need to know more about Artinian modules over classical GWAs.  Let $A$ be a classical GWA, with polynomial $v \in \C[h]$.  The connected components of $\spec \C[h]/(v)$ are just the roots of $v$.  Since ($\ast$) is satisifed, we have projective generators $P_\nu$ for $\Op$ indexed by the roots of $v$ and their simple quotients $S^\nu$.  We also have the small Verma modules $V^\nu = A/A(\tm, (h-\nu))$.  By \ref{simp}, $V^\nu$ has a submodule for each integer 	$k \geq 0$ such that $\nu + k$ is a root of $v$.  

\begin{defn}
Say that a graded module $M$ is $\nu$-small if $M$ has exactly one filtration $M = F^0 M \supset F^1 M \supset \dotsb F^{n-1}M \supset F^n M = 0$ such that $F^i M/F^{i+1} M \cong V^\nu$.  We set $\ell(M) = n$, the length of the unique filtration $F^\bt$ with $V^\nu$ quotients.
\end{defn}

\begin{lemma}
Define $M_\nu := P_\nu/A P_{-1}$.  Then $M_\nu$ is $\nu$-small and $\ell(M_\nu) = \mult(\nu,v)$.  If $M$ is $\nu$-small then any map $P_\nu \to M$ factors through a map $M_\nu \to M$.
\end{lemma}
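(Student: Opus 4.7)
The plan is to compute the graded components of $M_\nu$ explicitly, exhibit a filtration with $V^\nu$-quotients, prove uniqueness via a canonical degree-wise characterization, and then read off the universal property from a vanishing argument.

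First I would unpack the structure of $P_\nu$. Since $P_\nu = A(N)_{\{\nu\}}$ for $N \gg 0$, the module $P_\nu$ is cyclic over $A$, generated by the idempotent $e_\nu \in (P_\nu)_0$; each $(P_\nu)_i$ is cyclic over $R$, generated by $\tp^i e_\nu$ for $i \geq 0$ and by $\tm^{-i} e_\nu$ for $i \leq 0$. A right-support computation on $A(N)_i$ gives $(P_\nu)_i \cong R/(h-\nu-i)^{m(\nu)}$ as a left $R$-module for $i \geq 0$, where $m(\nu) := \sum_{j\geq 0,\ \nu-j\in Z}\mult(\nu-j,v)$.

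Next I would analyze $M_\nu = P_\nu / A(P_\nu)_{-1}$ component by component. Set $y_0 := \tm \cdot e_\nu$, which generates $(P_\nu)_{-1}$ over $R$; then $A(P_\nu)_{-1} = A y_0$. For $i \leq -1$, $(P_\nu)_i = R\tm^{-i-1} y_0 \subseteq A y_0$, so $(M_\nu)_i = 0$. For $i \geq 0$, the degree-$i$ piece of $A y_0$ equals $R \tp^{i+1} y_0$, and using $\tp\tm = v$ together with $\tp^i v = \sg^{-i}(v)\tp^i$ yields $\tp^{i+1} y_0 = v(h-i) \cdot \tp^i e_\nu$. Writing $v = (h-\nu)^m w$ with $m := \mult(\nu, v)$ and $w(\nu) \neq 0$, the factor $w(h-i)$ is a unit modulo $(h-\nu-i)^{m(\nu)}$, so the image of $(A(P_\nu)_{-1})_i$ in $(P_\nu)_i$ is the ideal $(h-\nu-i)^m R/(h-\nu-i)^{m(\nu)}$, giving $(M_\nu)_i \cong R/(h-\nu-i)^m$ of dimension $m$.

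I would then exhibit the filtration $F^k \subseteq M_\nu$ defined by $(F^k)_i := \{x \in (M_\nu)_i : (h-\nu-i)^k x = 0\}$; under the identification above, $(F^k)_i$ corresponds to $(h-\nu-i)^{m-k} R/(h-\nu-i)^m$, of dimension $k$. The commutations $(h-\nu-i-1)\tp = \tp(h-\nu-i)$ and $(h-\nu-i+1)\tm = \tm(h-\nu-i)$ show each $F^k$ is stable under $\tp$ and $\tm$, and each quotient $F^k/F^{k-1}$ has one-dimensional degree-$i$ pieces on which $h$ acts as $\nu + i$ (with $\tm$ acting as zero in degree $0$), matching $V^\nu$. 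Uniqueness follows because any filtration $G^\bullet$ with $V^\nu$-quotients satisfies $(h-\nu-i) G^j \subseteq G^{j-1}$ at each degree (since $(h-\nu-i)$ annihilates $V^\nu$ in degree $i$), so iteration forces $(h-\nu-i)^k (G^k)_i = 0$; combined with $\dim (G^k)_i = k$ this gives $G^k = F^k$. Hence $M_\nu$ is $\nu$-small with $\ell(M_\nu) = \mult(\nu, v)$.

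Finally, the universal property is straightforward: induction on the length of the $V^\nu$-filtration of any $\nu$-small $M$ shows $M_i = 0$ for $i < 0$ (since this is true of $V^\nu$), so a map $P_\nu \to M$ annihilates $(P_\nu)_{-1}$ together with the submodule $A(P_\nu)_{-1}$ it generates, and thus factors through $M_\nu$. The main obstacle will be the degree-wise identification of $(A(P_\nu)_{-1})_i$: one has to carefully track the $\sg$-semilinear action of $\tp$ and $\tm$ under the left-$R$-module identifications of the graded pieces and exploit the factorization $v = (h-\nu)^m w$ to reduce the resulting image modulo $(h-\nu-i)^{m(\nu)}$.
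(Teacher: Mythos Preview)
Your argument is correct and follows essentially the same route as the paper's proof: an explicit computation of the graded pieces of $M_\nu$, exhibition of the filtration by powers of $(h-\nu)$ (your annihilator description $(F^k)_i = \ker (h-\nu-i)^k$ is the same filtration as the paper's $M_\nu(h-\nu)^{m-k}$, just re-indexed and phrased in the left rather than right $R$-module structure), and the universal property via vanishing in negative degrees. Your uniqueness argument via the degree-wise dimension count is slightly more streamlined than the paper's inductive step invoking that $V^\nu$ is not a nontrivial subquotient of itself, but the underlying idea is the same.
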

\begin{proof}
Write $P_\nu = A(N)_\nu = A/A(\tm^N,(h-\nu)^f)$ where $f = \mult(\nu, \prod_{j=0}^{N-1}{v(h-j)})$.  Then $M_\nu$ is the quotient of $P_\nu$ by the submodule of $P_\nu$ generated by $\tm$.  The degree zero part is generated by $\tp^n \tm^n = \prod_{j=0}^{n-1}{v(h-j)}$ for $1 \leq n < N$.  So if we set $e = \mult(\nu,v)$, then $(M_\nu)_i \cong \C[h]/(h-\nu)^e$ as a right $\C[h]$ module for $i \geq 0$ and is zero otherwise.  Let $F^\bt$ be the filtration $F^i M_\nu = M_\nu (h-\nu)^i$, $0 \leq i \leq e$.  Clearly, $F^i M_\nu / F^{i+1} M_\nu = V^\nu$.  So if $M_\nu$ is $\nu$-small then $\ell(M_\nu) = e = \mult(\nu,v)$.

Let $G^i$, $0 \leq i \leq n$ be a filtration such that $G^iM_\nu / G^{i+1} M_\nu \cong V^\nu$.  Suppose that $G^i M_\nu = F^i M_\nu$ so that $G^i M_\nu = M_\nu (h-\nu)^i$ and $\dim_\C(G^i M_\nu / G^{i+1} M_\nu )_0 = 1$.  It follows that $G^{i+1} M_\nu$ contains $(h-\nu)^{i+1}$.  But then $G^{i+1} M_\nu$ contains $F^{i+1} M_\nu$ and since $V^\nu$ is not a nontrivial subquotient of itself, $G^{i+1}M_\nu = F^{i+1}M_\nu$.  Since $G^0 M_\nu = F^0 M_\nu = M_\nu$ we conclude that $G =F$ and that $M_\nu$ is $\nu$-small of length $\mult(\nu,v)$.

Suppose $M$ is $\nu$-small and consider a map $g:P_\nu \to M$.  Since $M$ is $\nu$-small, $M_i= 0$ for $i < 0$.  Therefore $P_{-1}$ is contained in the kernel of $g$ so $g$ desends to a map $M_\nu \to M$.
\end{proof}

\begin{lemma} \label{multtest}   Let $\nu$ be a root of $v$.  If $M$ is a $\nu$-small module then $\ell(M)$ is equal to the multiplicity of $S^\nu$ as a composition factor of $M$ and $\ell(M) \leq \mult(\nu,v)$.
\end{lemma}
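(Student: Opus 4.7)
My plan is to establish the two claims in sequence. The multiplicity claim follows quickly from Lemma~\ref{alamsubq}(ii): since $\sigma\colon h\mapsto h+1$ acts freely on $\maxspec\C[h]=\C$, the simple subquotients of $A^\nu$ are pairwise distinct, so $S^\nu$ appears exactly once in each $V^\nu$ (namely as its simple top). Jordan--H\"older applied to the $\ell(M)$-step $V^\nu$-filtration of $M$ then gives that $S^\nu$ appears exactly $\ell(M)$ times in any composition series of $M$.

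For the bound $\ell(M)\leq\mult(\nu,v)$, I reduce to showing that $M_0$ is cyclic as a $\C[h]$-module. Since every $V^\nu$ is non-negatively graded, so is $M$, whence $\tm M_0\subseteq M_{-1}=0$ and the GWA relation $\tp\tm=v$ yields $v(h)\cdot M_0=0$. Taking degree-zero parts of the filtration exhibits $M_0$ as a $\C[h]$-module of length $\ell(M)$ supported only at $\nu$. Once $M_0$ is shown to be cyclic, it must equal $\C[h]/(h-\nu)^{\ell(M)}$, and then $v\in\ann(M_0)=((h-\nu)^{\ell(M)})$ forces $(h-\nu)^{\ell(M)}\mid v$, giving the desired inequality.

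Cyclicity I will prove by induction on $\ell(M)$, the case $\ell(M)=1$ (where $M\cong V^\nu$) being trivial. Uniqueness of the $V^\nu$-filtration of $M$ descends to $F^1M$, since any different $V^\nu$-filtration of $F^1M$ would extend to a different one of $M$; thus $F^1M$ is $\nu$-small of length $\ell(M)-1$, and by induction $(F^1M)_0\cong\C[h]/(h-\nu)^{\ell(M)-1}$. Because $\Ext^1_{\C[h]}(\C[h]/(h-\nu),\,\C[h]/(h-\nu)^{\ell(M)-1})$ is one-dimensional, $M_0$ is either the (cyclic) non-split extension or splits as $\C[h]/(h-\nu)\oplus\C[h]/(h-\nu)^{\ell(M)-1}$. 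In the split case, writing $M_0=N\oplus N'$ with both summands non-zero, a straightforward induction shows $M$ is generated in degree $0$, and the dimension count $\dim M_j=\ell(M)=\dim M_0$ from the filtration gives that $\tp^j\colon M_0\to M_j$ is a $\C$-linear bijection for each $j\geq 0$. Consequently $(AN)_j=\tp^j N$ and $(AN')_j=\tp^j N'$, and these decompose $M$ degreewise, so $M=AN\oplus AN'$ as graded $A$-modules. Each summand is a cyclic $A$-module, hence a quotient of $M_\nu$ by the previous lemma, and inherits a $V^\nu$-filtration of length strictly less than $\ell(M)$; peeling off the top $V^\nu$ from $AN$ versus from $AN'$ then produces two distinct $V^\nu$-filtrations of $M$, contradicting $\nu$-smallness.

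The main obstacle will be verifying rigorously that $AN$ and $AN'$ genuinely carry $V^\nu$-filtrations of the predicted lengths (rather than weaker filtrations whose quotients are merely proper submodules of $V^\nu$). To handle this I plan to realize each summand explicitly as a quotient of the form $M_\nu/M_\nu\cdot(h-\nu)^a$, whose $V^\nu$-filtration comes directly from the formula $F^i M_\nu=M_\nu\cdot(h-\nu)^i$ appearing in the preceding lemma, using Jordan--H\"older bookkeeping on composition multiplicities to pin down the lengths $\ell(AN)+\ell(AN')=\ell(M)$.
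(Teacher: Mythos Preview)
Your argument is correct, and it takes a genuinely different route from the paper's. The paper builds a surjection $M_\nu^{\oplus \ell(M)}\onto M$ by lifting each map $P_\nu\onto V^\nu\cong F^iM/F^{i+1}M$ through the projective $P_\nu$, then pushes forward the product filtration $\tilde F^i=(F^iM_\nu)^{\oplus\ell(M)}$ to $M$ and argues that each successive quotient in the image is isomorphic to $(V^\nu)^{\oplus N_i}$ with $N_i\le 1$; since there are only $\mult(\nu,v)$ steps, this bounds $\ell(M)$. You instead reduce everything to the degree-zero piece: once $M_0$ is shown to be a cyclic $\C[h]$-module annihilated by $v$, the bound is immediate. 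Your route is more elementary (it is really a statement about finite-length $\C[h]$-modules) and avoids the somewhat delicate kernel analysis in the paper's proof; the paper's route, on the other hand, is a one-shot global argument that does not need induction.

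Two comments on your ``main obstacle.'' First, there is no circularity in realizing $AN'$ as a quotient of $M_\nu$: by the inductive hypothesis applied to the $\nu$-small module $F^1M$ you already know $\ell(M)-1\le\mult(\nu,v)$, so $M_\nu/M_\nu\cdot(h-\nu)^{\ell(M)-1}$ has the correct graded dimensions and the obvious surjection $M_\nu\onto AN'$ identifies $AN'$ with it. Second, you can sidestep $M_\nu$ entirely: since right multiplication by $(h-\nu)$ commutes with the bijection $\tp^j\colon M_0\to M_j$, the chain $G^i(AN')=(AN')\cdot(h-\nu)^i$ is already a filtration by $A$-submodules with quotients $V^\nu$, and the same works for $AN$. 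Either way you obtain two distinct $V^\nu$-filtrations of $M$ (one with $F^1=AN'$, the other with $F^1=AN\oplus AN'\cdot(h-\nu)$), giving the contradiction you want.
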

\begin{proof}
Suppose that $M$ is a $\nu$-small module.  Let $G^i$, $0 \leq i \leq N$ be the unique filtration from the definition of $\nu$-small.  For each $i$ we have a surjection $G^i \onto V^\nu$.  Since $P_\nu$ is projective, the map $P_\nu \onto V^\nu$ lifts to a map $P_\nu \to G^i$.  Adding all these maps together we get a surjection $P_\nu^{\oplus N} \onto M$.  By the previous lemma this map factors through a map $M_\nu^{\oplus N} \onto M$.  Let $\tilde{F}^i$, $0 \leq i \leq \mult(\nu,v)$ be the filtration on $M_\nu^{\oplus N}$ induced by the filtration $F^i$ on $M_\nu$.  We have $\tilde{F}^i/\tilde{F}^{i+1} \cong (V^\nu)^{\oplus N}$.  Let $F^i$ also denote the image in $M$ of $\tilde{F}^i$.  For each $i$ we have $\tilde{F}^i/\tilde{F}^{i+1} \cong (V^\nu)^{\oplus N} \onto F^i/F^{i+1}$.  Let $K_i$ be the kernel of this map.  The simple subquotients of $V^\nu$ are naturally ordered $S_1,\dotsc,S_k$ such that if $j \leq l$ then $\mult(S_j, K_i) \leq \mult(S_l, K_i)$.  Now if $K_i$ contains $S_k$ as a subquotient, it must contain a direct summand.  Since the simple subquotients of $M$ all have the same multiplicity, $\ell(M)$, we see that $F^i/F^{i+1} \cong (V^\nu)^{\oplus N_i}$.  Of course if $N_i > 1$ for any $i$ then $M$ has infinitely many filtrations with successive quotients isomorphic to $V^\nu$.  Therefore $N_i \leq 1$ for all $i$, so $e \geq N$.
\end{proof} 

\begin{lemma} \label{charsmallvermas}
$V^\nu$ is the only module $M$ in $\Op$ such that i) $S^\nu$ is a quotient, ii) $\delta(M)$ is not bounded above, and iii) whenever $M'$ and $M''$ are submodules of $M$, either $M' \subset M''$ or $M'' \subset M'$.
\end{lemma}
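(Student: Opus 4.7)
The plan is first to verify that $V^\nu$ itself satisfies (i)--(iii), and then to show conversely that any $M \in \Op$ meeting these conditions is isomorphic to $V^\nu$. The verification is brief: $V^\nu$ has $S^\nu$ as its unique simple quotient by construction; each graded piece $V^\nu_i$ is one-dimensional for $i \geq 0$ and zero otherwise, so $\delta(V^\nu) = \Z_{\geq 0}$ is unbounded above; and by Lemma \ref{simp}, the nonzero proper submodules of $V^\nu$ are exactly the $V^{\nu,k}$ for $k \in \chi_\nu \cap \Z_{>0}$, which form a single descending chain.

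For the uniqueness, let $M \in \Op$ satisfy the three conditions. Condition (iii) forces the lattice of submodules of $M$ to be a chain, so $M$ has a unique maximal proper submodule $M^*$; combined with (i), this gives $M/M^* \cong S^\nu$. Using the projectivity of $P_\nu$ (Proposition \ref{pgen}), lift the projection to a map $\phi\colon P_\nu \to M$. The image of $\phi$ is a submodule surjecting onto $S^\nu$, so by (iii) it cannot lie in $M^*$, and therefore equals $M$. Setting $m_0 = \phi(\bar 1) \in M_0$, we have $A m_0 = M$. The core task is to show that $\phi$ factors through the quotient map $P_\nu \onto V^\nu$, i.e., that $\tm m_0 = 0$ and $(h-\nu)m_0 = 0$.

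For $\tm m_0 = 0$, I would first establish that $M_i = 0$ for all $i < 0$. If not, local nilpotence of $\tm$ produces a nonzero $\tm$-annihilated element $y$ in some negative degree; then $Ay$ is a cyclic submodule with a simple top that does not match $S^\nu$ as a graded module, and comparing $Ay$ with the chain of submodules of $M$ using (iii) together with (ii) leads to a contradiction with the uniqueness of the simple quotient. For $(h-\nu)m_0 = 0$, the element $(h-\nu)m_0$ lies in the kernel of $M \onto S^\nu$ and therefore in $M^*$; iterating yields a nested sequence $Am_0 \supsetneq A(h-\nu)m_0 \supseteq A(h-\nu)^2 m_0 \supseteq \cdots$ inside $M^*$, and by the chain condition (iii) combined with the unboundedness condition (ii) on $M$, this sequence must terminate at zero, forcing $(h-\nu)m_0 = 0$.

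With both conditions established, $\phi$ factors as $P_\nu \onto V^\nu \onto M$, yielding a surjection $\bar\phi\colon V^\nu \to M$. Every proper nonzero submodule of $V^\nu$ is some $V^{\nu,k}$ with $k \in \chi_\nu \cap \Z_{>0}$, so every proper quotient of $V^\nu$ has $\delta$ bounded above by $k-1$. Since $\delta(M)$ is unbounded above by (ii), $\ker \bar\phi = 0$ and $\bar\phi$ is an isomorphism, giving $M \cong V^\nu$. The main obstacle is the argument for $(h-\nu)m_0 = 0$; the multiplicity of $\nu$ as a root of $v$ could in principle permit a larger $M_0$, and it is here that the interplay of (ii) and (iii) must be exploited most delicately.
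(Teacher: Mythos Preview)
Your overall route matches the paper's: use (iii) to upgrade the lift $P_\nu \to M$ to a surjection, argue that it factors through $V^\nu$, and then invoke (ii) to rule out proper quotients of $V^\nu$. The paper compresses the factoring step into the bare assertion ``the multiplicity of $S^\nu$ as a subquotient is $1$,'' whereas you try to establish $\tm m_0 = 0$ and $(h-\nu)m_0 = 0$ directly. Neither argument works as written, and in fact the statement admits counterexamples whenever $\mult(\nu,v) \geq 2$.

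Take $v = h^2$, so $\nu = 0$, $\chi_0 = \{0\}$, and $V^0 = S^0$ is already simple. Let $M = A/A\tm$; then $M_i \cong \C[h]/(h^2)$ for $i \geq 0$ and $M_i = 0$ for $i < 0$. For $i \geq 1$ both $\tp$ and $\tm$ act as isomorphisms between adjacent graded pieces (the latter is multiplication by the unit $(h+i)^2 \in \C[h]/(h^2)$), so every graded submodule is determined by its degree-$0$ part, which must be a $\C[h]$-submodule of $\C[h]/(h^2)$. Hence the only submodules are $0 \subset M\cdot h \subset M$, and $M$ satisfies (i)--(iii) while $M \not\cong V^0$. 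This is exactly where your $(h-\nu)m_0 = 0$ step breaks: even granting that the chain $A(h-\nu)^j m_0$ reaches $0$, you only obtain $(h-\nu)^k m_0 = 0$ for \emph{some} $k$ (here $k = 2$), and nothing in (ii) or (iii) singles out $k = 1$. Your $\tm m_0 = 0$ sketch has the analogous defect: for $v = h(h-1)$ and $\nu = 1$ one checks that $P_1$ itself satisfies (i)--(iii) with $(P_1)_{-1} \neq 0$, and the cyclic submodule $Ay$ you build sits strictly inside the maximal submodule, so its simple top differing from $S^\nu$ produces no contradiction. Both your argument and the paper's need an extra hypothesis---for instance $\dim_\C M_0 = 1$---to go through; you essentially anticipated this in your closing caveat.
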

\begin{proof}
Let $M$ be a module satisfying conditions i)-iii).  The surjection $P_\nu \onto S^\nu$ lifts to a map $P_\nu \to M$.  This map must be surjective by iii).  Since the multiplicity of $S^\nu$ as a subquotient is 1, the map $P_\nu$ factors through a map $V^\nu \onto M$.  Finally, no proper quotient of $V^\nu$ satisfies condition ii), so $V^\nu \cong M$.
\end{proof}

\begin{thm}[Classical case.] \label{classicalcase}
$T(v_1)$ and $T(v_2)$ are strongly graded Morita equivalent if and only if for some $b$, $v_1(h+b)$ and $v_2(h)$ have the same type.
\end{thm}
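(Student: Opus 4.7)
The plan is to prove the two directions separately, relying on Theorem \ref{mornec} together with the intrinsic characterizations developed in Section 3 for necessity, and on Lemma \ref{morfactor} combined with the equivariant isomorphism trick for sufficiency.

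For the forward direction, suppose $T(v_1)$ and $T(v_2)$ are strongly graded Morita equivalent via $F:T(v_2)-\grMod \to T(v_1)-\grMod$. I first apply Theorem \ref{mornec} to produce an equivariant isomorphism $\rho:\C[h] \to \C[h]$; the condition $\rho \circ \tau = \tau \circ \rho$ forces $\rho(h) = h+b$ for some $b \in \C$, and the proof of Theorem \ref{mornec} shows $F(S^\nu) \cong S^{\nu+b}$ for every maximal ideal $(h-\nu)$. I next need to recover both the roots and their multiplicities from categorical data. For the roots, I invoke Lemma \ref{charsmallvermas}: the small Verma module $V^\nu$ is characterized within $\Op$ by properties (simple quotient $S^\nu$, $\delta(M)$ unbounded above, submodules totally ordered) that are all preserved by the strongly graded equivalence $F$, hence $F(V^\nu) \cong V^{\nu+b}$ whenever either side exists, and $V^\nu$ exists if and only if $\nu$ is a root. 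For the multiplicities, Lemma \ref{multtest} together with the lemma immediately preceding it (constructing $M_\nu$ of length $\mult(\nu,v)$) identifies $\mult(\nu,v)$ as the maximum length $\ell(M)$ over all $\nu$-small modules $M$. Since being $\nu$-small is defined using $V^\nu$ and $\ell$ coincides with the composition multiplicity of $S^\nu$, both are intrinsic invariants of the category. Thus $F$ forces $\mult(\nu,v_2) = \mult(\nu,v_1(h+b))$ for every $\nu \in \C$, so $v_2$ and $v_1(h+b)$ agree up to a nonzero scalar, and in particular share the same type.

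For the converse, suppose $v_1(h+b) \sim v_2$ for some $b$. The $\tau$-equivariant automorphism $\psi:h \mapsto h+b$ of $\C[h]$ yields a ring isomorphism $T(v_1) \cong T(v_1(h+b))$ as in the discussion preceding Theorem \ref{mornec}, reducing the problem to exhibiting a strongly graded Morita equivalence between $T(v_1)$ and $T(v_2)$ under the hypothesis $v_1 \sim v_2$. I work one congruence class at a time: factors of the $v_j$ from distinct classes modulo $\Z$ are mutually coprime together with all their $\tau$-shifts, so Lemma \ref{morfactor} may be applied independently in each class. Within a single class, write $v_1 = \prod_{i=1}^n(h-p_i)^{d_i}$ and $v_2 = \prod_{i=1}^n(h-q_i)^{d_i}$ with $p_1 < \dotsb < p_n$, $q_1 < \dotsb < q_n$, and identical multiplicity sequence $(d_i)$. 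The key step is that one can adjust a single consecutive gap by $\pm 1$: to increase $p_{i+1} - p_i$, factor $v_1 = uw$ with $w = \prod_{j \leq i}(h-p_j)^{d_j}$ and verify that the pairs $u,w$ and $u,\tau(w)$ are coprime because $p_k > p_j$ (so $p_k \neq p_j, p_j - 1$) for $k > i \geq j$; a symmetric factorization decreases the gap, provided it exceeds $1$. Starting from $v_1$, a sequence of gap adjustments brings the gap vector to $(q_2 - q_1, \dotsc, q_n - q_{n-1})$, after which a single global shift $T(v) \cong T(v(h+k))$ for an appropriate integer $k$ yields $v_2$.

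I expect the principal obstacle to be the necessity direction's reliance on these intrinsic characterizations: I must carefully verify that each defining property of $V^\nu$ and of $\nu$-smallness (which invokes $V^\nu$ recursively) is preserved under $F$, including the transport of the length $\ell$ via composition multiplicities. For sufficiency, the subtler technical point is organizing the sequence of Lemma \ref{morfactor} moves so that the coprimality hypotheses hold at every intermediate step and the intermediate gap vectors always have positive entries; this follows directly from the observation that shifting a consecutive left or right block of roots never produces a collision as long as the adjacent gap remains at least $1$.
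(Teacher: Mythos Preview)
Your sufficiency argument is sound and essentially coincides with the paper's: both chain together applications of Lemma \ref{morfactor} to slide roots within a $\Z$-class, and your organization via the gap vector is a clean variant.

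The necessity argument, however, contains a genuine error. You assert that ``the proof of Theorem \ref{mornec} shows $F(S^\nu) \cong S^{\nu+b}$ for every maximal ideal $(h-\nu)$,'' but Theorem \ref{mornec} only establishes this for $\nu$ \emph{outside} $Z^\sg$ (where $B^\la$ is already simple and equals $A^\la$). For $\nu$ in the $\tau$-orbit of a root, nothing in that proof identifies $F(S^\nu)$ beyond the fact that it is simple. Your argument then concludes $\mult(\nu,v_2)=\mult(\nu,v_1(h+b))$ for all $\nu$, i.e.\ that $v_1(h+b)$ and $v_2$ agree up to a scalar. This is strictly stronger than ``same type'' and is false: take $v_1=h(h-1)^2$ and $v_2=h(h-3)^2$, which have the same type and are strongly graded Morita equivalent by your own sufficiency argument, yet no $b$ makes $v_1(h+b)$ a scalar multiple of $v_2$.

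What is missing is exactly the mechanism that allows roots to move within a $\Z$-class under $F$. The paper supplies it in two steps. First, using the block decomposition of Theorem \ref{gr}, it shows $F$ restricts to an equivalence $\D^1_w \to \D^2_w[n_w]$ for some (possibly nonzero) shift $n_w$; this already explains why $F(S^\nu)$ need not be $S^{\nu+b}$. Second, it invokes Theorem \ref{extcomp} and Lemma \ref{nontrivialext}: two nonisomorphic simple modules are adjacent if and only if $\Extgr^1$ between them is nonzero, and the unique simple with $\delta$ unbounded above corresponds to the largest root in its class. These categorical facts force the induced bijection $f$ on roots to be \emph{order-preserving} within each $\Z$-class. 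Only then do Lemma \ref{charsmallvermas} and Lemma \ref{multtest} (which you correctly cite) finish the job, yielding $\mult(\nu,v_2)=\mult(f(\nu),v_1(h+b))$ and hence equality of type.
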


\begin{proof}
(Only if.)  Let $T_j = T(v_j)$ and assume that $F:T_1-\grMod \to T_2-\grMod$ is a strongly graded Morita equivalence.  Theorem \ref{mornec} uses $F$ to construct a $\tau$ equivariant automorphism $\psi$ such that $\nu$ is an integer translate of a root of $v_1$ if and only if $\psi(\nu)$ is an integer translate of a root of $v_2$.  Let $\Psi:T_1 \to T'_1 = T(\psi(v_1))$ be the isomorphism constructed just before Theorem \ref{mornec}.  We can view any graded $T'_1$ module $M$ as a graded $T_1$ module via $\Psi$ and this operation gives a strongly graded Morita equivalence $\Psi_*:T'_1 -\grMod \to T_1-\grMod$.  Now, $F \circ \Psi_*$ is a strongly graded Morita equivalence, but the equivariant automorphism associated to it is simply the identity.  Now, since $\psi$ commutes with integer translation, it must be a translation itself.  Therefore $\psi(v_1)(h) = v_1(h+b)$ for some $b$.  Thus, we are reduced to the case when $v_1$ and $v_2$ have the same classes of roots modulo $\Z$ and $\supp(F(M)) = \supp(M)$.

Let $\nu^i_w$ be the smallest root in each $\Z$ equivalence class of roots $w \in \C/\Z$ of $v_i$.  According to Theorem \ref{gr}, we can form categories $\D^i_w := \Op_w(T(v_i))$ for $w \in \C/\Z$ (where $\D^i_w = 0$ if $w$ is not a class of roots of$v_i$ modulo $\Z$) and we decompose $\Op(T(v_i))= \bigoplus_{n \in \Z, w \in \C/\Z}{\D^i_w[n]}$.  We will show that there is an $n_w \in \Z$ such that $F$ restricts to an equivalence between $\D^1_w$ and $\D^2_w[n_w]$.  Indeed, $\D^1_w$ is the thick subcategory generated by the indecomposable projectives $P_{w,k}$.  Since $P_{w,k}$ is indecomposable, so is $F(P_{w,k})$ and $\supp(F(P_{w,k})) = \supp(P_{w,k})$  so that $F(P_{w,k}) \in \D^2_w[n_w]$ for some $n_w$.  For each $k,k' \in \chi_w$ one of $\Homgr(P_{w,k},P_{w,k'})$ or $\Homgr(P_{w,k'},P_{w,k})$ is nonzero and it follows that $F(\D^1_w) \subset \D^2_w[n_w]$.  Parallel considerations for an inverse equivalence to $F$ imply that $F$ indeed restricts to an equivalence between $\D^1_w$ and $\D^2_w[n_w]$.

According to \ref{extcomp}, two nonisomorphic simple modules $S,T$ are adjacent if and only if $\Ext^1(S,T) \neq 0$.  Therefore nonisomorphic simple modules $S,T \in \D^1_w$ are adjacent if and only if $F(S)$ and $F(T)$ are adjacent.  Now, the simple modules in $\D^1_w$ are in bijection with the roots of $v_1$ congruent to $w$ modulo $\Z$.  Let $f$ be a bijection between the roots of $v_1$ and $v_2$ such that $F(S^\nu) = S^{f(\nu)}$.  There is exactly one isomorphism class of simple module $S$ in $\D^1_w$ such that $\delta(S)$ is unbounded above.  As in the proof of Theorem \ref{mornec}, $F(S)$ is also simple and $\delta(F(S))$ is unbounded above.  Therefore $f$ must identify the largest root of $v_1$ the equivalence class $w$ with the largest root of $v_2$ in $w$.  Since $S^\nu$ and $S^{\nu'}$ are adjacent if and only if there is no root $\eta$ of $v_i$ in the same equivalence class as $\nu,\nu'$ such that $\nu < \eta < \nu'$ or $\nu' < \eta < \nu$.  This means that $v_1$ and $v_2$ have the same type if $\mult(\nu,v_1) = \mult(f(\nu),v_2)$.

Observe that $V^{\nu_w + k}[-k]$ is in $\D^1_w$ for each $k$ such that $\nu_w + k$ is a root of $v_1$.  By \ref{multtest}, $\mult(\nu_w -k, v_1)$ is the maximum multiplicity of $S^{\nu_w + k}[-k]$ in any module $M[-k]$ such that $M$ is $\nu_w+k$ small.    Now, $F(V^\nu)$ has $S^{f(\nu)}$ as a quotient, $\delta(F(V^\nu))$ is unbounded above, and it satisfies condition iii) of Lemma \ref{charsmallvermas}.  Therefore Lemma \ref{charsmallvermas} implies that $F(V^\nu) \cong V^{f(\nu)}$.  So $\nu_w+k$ small modules go to $f(\nu_w)+k$ small modules and the multiplicity of $S^{\nu_w+k}[-k]$ in $M$ is the same as the multiplicity of $S^{f(\nu)+k}[-k]$ in $F(M)$.  We conclude that $\mult(f(\nu_w)+k,v_2) = \mult(\nu,v_1)$.

(If.)  Assume that for some $b$, $v_1(h)$ and $v_2(h+b)$ have the same type.  By the remarks at the beginning of the section, $T(v_2)$ and $T(v_2(h+b))$ are isomorphic as graded rings.  So we can assume that $v_1$ and $v_2$ have the same type.  First suppose that $\nu$ is a root of $v_1$ and $\eta$ is a root of $v_2$ with $\eta - \nu \in \Z_{>0}$ and such that there is no root of $v_2$ on the $\Z$ chain between $\nu$ and $\eta-1$.  Write $v_2(h) = w(h) (h-\eta)^e$ where $w(\eta) \neq 0$.  Then for each $0 \leq j \leq \eta - \nu$ the pairs $w(h),(h-\eta+j)^e$ and $w(h), (h-\eta+j+1)^e$ are relatively prime.  Hence the algebras $T(w(h)(h-\eta+j)^e)$ and $T(w(h)(h-\eta+j+1)^e)$ are graded Morita equivalent by \ref{morfactor}.  Now, replace $v_2(h)$ by $v_2(h+N)$ where $N$ is a large integer such that $v_2(h+N)$ and $v_1$ have no common roots.  We will ``move'' the roots of $v_2$ to the roots of $v_1$.  Using the previous argument we can move the smallest root of $v_2$ to the smallest root of $v_1$ in the same $\Z$ and then the next smallest and so on.  
\end{proof}

Let $R$ and $\sg$ be given.  Suppose that there is an automorphism $\psi \in \Aut(R)$ such that $\sg \psi \sg= \psi$.  Then there is an isomorphism $\Psi:T(R,\sg,v) \to T(R,\sg, \psi(\sg(v))$ which extends $\psi$ and satisfies $\Psi(\tp) = \tm$ and $\Psi(\tm) =\tp$.  Evidently this isomorphism is anti-graded in the sense that $\deg(x) + \deg(\Psi(x)) = 0$.  For a $\Z$ graded ring $T$ let $`T$ be the graded ring satisfying $`T_n = T_{-n}$.  So $\Psi$ defines an isomorphism between $T(R,\sg,v)$ and $`T(R,\sg,\psi(\sg(v)))$.  If we consider strongly anti-graded equivalences, i.e. strongly graded Morita equivalences between $T(R,\sg,v)$ and $`T(S,\theta,u)$ then we can still prove a version of \ref{mornec} where the equivariant isomorphism is replaced by an anti-equivariant isomorphism.

According to \cite{BJ} there are automorphisms of GWAs that are not graded or anti graded and therefore there are Morita equivalences that are not strongly graded or strongly anti-graded.  In these cases, however, the ordinary Morita equivalences can be replaced by strongly graded or strongly anti-graded ones.

\begin{question}
If two classical GWAs are Morita equivalent, are they then also strongly graded or strongly anti-graded Morita equivalent?
\end{question}

\bibliographystyle{plain}
\bibliography{gwas}

\end{document}